\newcommand{\newabstract}[1]{%
	\par\bigskip
	\csname otherlanguage*\endcsname{#1}%
	\csname captions#1\endcsname
	\item[\hskip\labelsep\scshape\abstractname.]
}
\DeclarePairedDelimiter\floor{\lfloor}{\rfloor}
\newsavebox{\@brx}
\newcommand{\llangle}[1][]{\savebox{\@brx}{\(\m@th{#1\langle}\)}%
  \mathopen{\copy\@brx\kern-0.5\wd\@brx\usebox{\@brx}}}
\newcommand{\rrangle}[1][]{\savebox{\@brx}{\(\m@th{#1\rangle}\)}%
  \mathclose{\copy\@brx\kern-0.5\wd\@brx\usebox{\@brx}}}
\declaretheorem[
name=Theorem,
Refname={Theorem,Theorems},
numberwithin=section]{thm}
\declaretheorem[
name=Claim,
Refname={Claim,Claims},
sibling=thm]{claim}
\declaretheorem[
name=Lemma,
Refname={Lemma,Lemmas},
sibling=thm]{lem}
\declaretheorem[
name=Definition,
Refname={Definition,Definitions},
sibling=thm]{dfn}
\newtheorem{thmx}{Theorem}
\renewcommand{\thethmx}{\Alph{thmx}}
\newlist{thmenum}{enumerate}{1} 
\setlist[thmenum]{wide = 0pt, labelwidth = 2em, labelsep*=0em, itemindent = 0pt, leftmargin = \dimexpr\labelwidth + \labelsep\relax, noitemsep,topsep = 1ex, font=\normalfont, label=(\roman*), ref=\thethmx.(\roman{thmenumi})}
\theoremstyle{plain}
\newlist{thmlist}{enumerate}{1}
\setlist[thmlist]{wide = 0pt, labelwidth = 2em, labelsep*=0em, itemindent = 0pt, leftmargin = \dimexpr\labelwidth + \labelsep\relax, noitemsep,topsep = 1ex, font=\normalfont, label=(\roman*), ref=\thethm.(\roman{thmlisti})}
\crefname{lem}{Lemma}{Lemmas}
\crefname{thm}{Theorem}{Theorems}
\crefname{proposition}{Proposition}{Propositions}
\crefname{dfn}{Definition}{Definitions}
\crefname{rem}{Remark}{Remarks}
\crefname{cor}{Corollary}{Corollaries}
\crefname{corx}{Corollary}{Corollaries}
\crefname{problem}{Problem}{Problems}
\crefname{thmx}{Theorem}{Theorems}
\crefname{claim}{Claim}{Claims}
\crefname{main}{Main Theorem}{Main Theorems}
\newtheorem{rem}[thm]{Remark}
\newtheorem{conjecture}[thm]{Conjecture}
\newtheorem{corx}[thmx]{Corollary}
\newcommand{\crefnames}[3]{%
	\@for\next:=#1\do{%
		\expandafter\crefname\expandafter{\next}{#2}{#3}%
	}%
}
\newcommand{\lowerromannumeral}[1]{\romannumeral#1\relax}
\numberwithin{equation}{subsection}
\def\oc{\mathscr{O}}  
 \def\rc{\mathbb{R}} \def\mc{\mathfrak{m}}
 \def\fc{\mathcal{F}}
\def\af{\mathbf{a}}
\def\cb{\mathbb{C}}
\def\ab{\mathbb{A}}
\def\Im{\operatorname{Im}}
\def\as{\mathscr{A}} \def\es{e^\star}
\def\ls{\mathscr{L}}
\def\tl{\widetilde}  
\def\cb{\mathbb{C}} 
\def\gb{\mathbb{G}} 
\def\pb{\mathbb{P}}  
\def\zbb{\mathbb{Z}}
\def\ys{\mathscr{Y}}
\def\ls{\mathscr{L}}
\def\hs{\mathscr{H}}
\def\xs{\mathscr{X}}
\def\zs{\mathscr{Z}}
\def\grs{{\rm{Gr}}_{k+1}(V)}
\def\ebf{\mathbf{e}}
\def\gf{\mathbf{G}}
\def\wk{\mathfrak{w}}
\def\pt{\text{P}}
 \def\d{\partial}
\def\ep{\varepsilon}
\def\es{\mathscr{E}}
\let\oldsection\section
\renewcommand{\section}{
	\renewcommand{\theequation}{\thesection.\arabic{equation}}
	\oldsection}
\let\oldsubsection\subsection
\renewcommand{\subsection}{
	\renewcommand{\theequation}{\thesubsection.\arabic{equation}}
	\oldsubsection}
\title{On the Diverio-Trapani Conjecture}
\author{Ya Deng}
\address{Institut Fourier, Universit\'e Grenoble Alpes, 100 Rue des Maths, Gi\`eres, 38610, France}
\curraddr{Universit\'e de Strasbourg, Institut de Recherche Math\'ematique Avanc\'ee,   	7 Rue Ren\'e-Descartes, Strasbourg, 67084, France}
\email{deng@math.unistra.fr;\ dengya.math@gmail.com}
\urladdr{https://sites.google.com/site/dengyamath}
\begin{document}

	\begin{abstract}
	In this paper we establish   effective lower bounds on the degrees of the Debarre and Kobayashi conjectures.   Then we study a more general conjecture  proposed  by Diverio-Trapani on the ampleness of  jet bundles of general complete intersections in complex projective spaces.
\end{abstract}
\begin{altabstract}
	Dans cet article,  nous \'etablissons  des bornes  inf\'erieures effectives sur les  degr\'es li\'es aux conjectures de Debarre et Kobayashi.  Ensuite,   nous \'etudions une conjecture plus g\'en\'erale propos\'ee par Diverio-Trapani sur l'amplitude des fibr\'es de jets des intersections compl\`etes g\'en\'erales  dans les espaces projectifs complexes.  
	\end{altabstract}
\subjclass{32Q45,  	14M10, 14M15, 14C20}
\keywords{Kobayashi hyperbolicity, ample cotangent bundle,  Debarre conjecture, Kobayashi conjecture, Diverio-Trapani conjecture, Nakamaye's theorem, Brotbek's Wronskians} \altkeywords{hyperbolicit\'e au sens de Kobayashi, fibr\'e cotangent ample,  conjecture de Debarre,  conjecture de  Kobayashi,  conjecture de Diverio-Trapani,  th\'eor\`eme de Nakamaye,  Wronskians de Brotbek}
	\maketitle



\section{ Introduction}\label{intro}
A compact complex manifold $X$ is said to be Kobayashi (Brody) hyperbolic if there exists no non-constant  holomorphic map $f:\cb\to X$.  As is well-known, a sufficient criteria for Kobayashi hyperbolicity is the ampleness of the cotangent bundle.  Although the complex manifolds  with ample cotangent bundles   are expected to be reasonably abundant, there are few concrete constructions before the work of Debarre.  In \cite{Deb05}, Debarre  proved that the complete intersection of sufficiently ample general  hypersurfaces in a \emph{complex abelian variety},  whose codimension is at least as large as its dimension, has ample cotangent
bundle.  He further conjectured that this result should also hold for intersection varieties of general hypersurfaces in \emph{complex projective spaces} (the so-called \emph{Debarre conjecture}). This conjecture was recently proved by Brotbek-Darondeau \cite{BD15} and independently  by Xie \cite{Xie16,Xie15},  based on the ideas and explicit methods  in \cite{Bro16}. 
\begin{thm}[Brotbek-Darondeau, Xie]\label{bdx}
Let  $X$ be an $n$-dimensional  projective manifold  equipped with a very ample line 	bundle $\as$. Then   there exists   $d_{{\rm Deb},n}\in \mathbb{N}$ depending only on the dimension $n$, such that for all $d \geqslant d_{{\rm Deb},n}$,  the   complete intersection of $c$-general  hypersurfaces   $H_1,\ldots,H_c\in |\as^d|$  has ample cotangent bundle,  provided that $\frac{n}{2}\leqslant c\leqslant n$.
\end{thm}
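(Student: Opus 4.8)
The plan is to reduce the theorem to the base-point-freeness, on a general complete intersection, of an explicit linear system of symmetric differentials twisted by a negative power of $\as$, and to produce those differentials by Brotbek's Wronskian method. Write $Y=H_1\cap\dots\cap H_c$, which for general $H_i$ is a smooth projective manifold of dimension $n-c$; let $\pi\colon\mathbb{P}(\Omega_Y):=\mathrm{Proj}\bigl(\bigoplus_{m\ge 0}S^m\Omega_Y\bigr)\to Y$ be the projectivized cotangent bundle and $L=\mathcal O_{\mathbb{P}(\Omega_Y)}(1)$ its tautological bundle, so that $\pi_\ast L^{\otimes m}=S^m\Omega_Y$ and $\Omega_Y$ is ample iff $L$ is. The first observation is that it suffices to find, for some $m\ge 1$, global sections of $L^{\otimes m}\otimes\pi^\ast\as^{-1}$ — that is, of $S^m\Omega_Y\otimes\as^{-1}|_Y$ — with empty common zero locus in $\mathbb{P}(\Omega_Y)$: then $L^{\otimes m}\otimes\pi^\ast\as^{-1}$ is globally generated, and a routine Nakai--Moishezon computation (using that $L$ restricts to $\mathcal O(1)$ on the $\mathbb{P}^{\,n-c-1}$-fibres of $\pi$, and that $\as|_Y$ is ample) shows that $L^{\dim V}\cdot V>0$ for every irreducible $V\subseteq\mathbb{P}(\Omega_Y)$, hence $L$ is ample. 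So the whole problem becomes: construct enough symmetric differentials vanishing on an ample divisor, with no common zero in the projectivized cotangent bundle.

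For this I would follow Brotbek--Darondeau and Xie. One first restricts the $H_i$ to a carefully chosen subfamily of $|\as^d|$ whose members have a highly structured, Fermat-type shape; on such $Y$ the first-order jets of the defining equations are computable, and Brotbek's Wronskian determinants, formed from the structured equations and their differentials, furnish global sections of $S^m\Omega_Y\otimes\as^{-1}|_Y$ for an effective $m=m(n)$ — the negative $\as$-twist being exactly what is engineered into the construction. One then estimates the common base locus: the Wronskian system attached to one choice of coordinates controls $\mathbb{P}(\Omega_Y)$ away from the preimage of a linear section of $Y$ whose codimension is comparable to $c$, so that combining finitely many systems attached to different coordinate choices, and moving the coordinates by a general automorphism of the ambient projective space — the Nakamaye-type ingredient, which lets one intersect away the residual base loci — empties the base locus entirely. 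The hypothesis $\tfrac n2\le c\le n$ enters precisely here: covering the projectivized cotangent bundle by the complements of finitely many such linear sections is possible exactly when $\dim Y=n-c$ does not exceed the codimension $c$, i.e.\ $c\ge n/2$; keeping track of the degrees entering the Wronskians and of how many systems are combined yields the explicit bound $d_{{\rm Deb},n}$.

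The main obstacle is the base-locus analysis in the second step. Producing symmetric differentials at all requires the $H_i$ to be special enough for the Wronskian determinants to be computed and to be non-degenerate; but the conclusion must then be transferred back to an \emph{arbitrary} general complete intersection, which forces one to check that the structured subfamily is nonetheless general enough that the resulting $Y$ is smooth and that genericity of the vanishing of the relevant Wronskians is preserved under specialization. Intertwined with this is the delicate combinatorics of how many Wronskian systems one may usefully combine versus the codimensions of the loci they fail to control — this is the heart of the $c\ge n/2$ restriction — and the need to make every estimate effective so as to exhibit the integer $d_{{\rm Deb},n}$. By contrast, once empty common base locus is in hand, the passage to ampleness of $\Omega_Y$ is formal.
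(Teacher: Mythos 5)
Your plan is essentially the route the paper takes (following Brotbek--Darondeau, which the paper effectivizes in \S 3--\S 4 with $k=1$): reduce ampleness of $\Omega_Y$ to emptiness of the base locus of $\mathcal{O}_{\mathbb{P}(\Omega_Y)}(m)\otimes\pi^*\as^{-1}$, produce such sections via Brotbek's Wronskians on Fermat-type complete intersections, control the base locus by a Nakamaye-type statement on the universal family of complete intersections over a (product of) Grassmannian(s), and pass to general members by openness of ampleness in families. The only cosmetic discrepancy is your heuristic for where $\tfrac n2\leqslant c$ enters: in the paper it appears as generic finiteness of the projection from the universal family (i.e. the $2c\geqslant n$ equations cut $\mathbb{P}^n$, and each stratum $\mathbb{P}_J$, down to finitely many points, which is what makes the Nakamaye-type base-locus control and the ``avoiding positive-dimensional fibers'' dimension count work), rather than literally a covering by complements of linear sections, but it is the same dimension count.
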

In \cite{Xie15},  Xie was able to obtain  an effective  lower bound   $d_{{\rm Deb},n}=  n^{n^2}$ by working with (much more elaborated) explicit expressions of some symmetric differential forms.  The result  in \cite{BD15}    is \og almost\fg effective on $d_{{\rm Deb},n}$, because it depends on some constant involved in some noetherianity argument, arising in their reduction to Nakamaye's theorem \cite{Nak00} for \emph{families of  zero-dimensional subschemes}.  
\medskip

One goal of the present paper  is to provide an effective estimate for such a Nakamaye's theorem (see \cref{effective nakamaye2}). In particular,  as a complement of   \cite[Theorem 1.1]{BD15},  we can improve Xie's effective lower bound   $d_{{\rm Deb},n}$.
\begin{thmx}\label{main:Debarre}
	In the same setting  as \cref{bdx}, one can take
	$$d_{{\rm Deb},n}=   (2n)^{n+3}.$$ 
\end{thmx}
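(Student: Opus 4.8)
The plan is to follow the strategy of Brotbek--Darondeau \cite{BD15} for \cref{bdx}, to isolate its single non-effective ingredient---a Nakamaye-type statement for families of zero-dimensional subschemes---and to replace that ingredient by the effective version \cref{effective nakamaye2} proved in this paper, keeping track of every constant. Concretely, write $Y = H_1 \cap \cdots \cap H_c$ with $H_i \in |\as^d|$ chosen $c$-general, so that $\dim Y = n-c$; ampleness of $\Omega_Y$ is equivalent to ampleness of the tautological class $\xi = \mathcal{O}_{\mathbb{P}(\Omega_Y)}(1)$ on the $(n-1)$-dimensional manifold $\mathbb{P}(\Omega_Y)$. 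One works over the parameter space $\mathbb{P} := \prod_{i=1}^{c} |\as^d|$ with its universal complete intersection $\mathcal{Y} \to \mathbb{P}$ and relative projectivization $\pi \colon \mathbb{P}(\Omega_{\mathcal{Y}/\mathbb{P}}) \to \mathcal{Y}$, and it suffices to prove that the fibrewise tautological class is ample over a dense Zariski-open subset of $\mathbb{P}$.

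The heart of the argument runs in three moves. First, choosing the defining equations $f_i$ of a suitable (Fermat-type) shape and applying Brotbek's Wronskian construction, one produces, for every $d$ above an explicit threshold, a family of global sections of $\mathcal{O}_{\mathbb{P}(\Omega_{\mathcal{Y}/\mathbb{P}})}(a) \otimes \pi^*(\text{a negative power of }\as)$---with $a$ effectively bounded in $n$ and $d$---whose fibrewise base locus is contained in a closed subscheme whose geometry is governed by a family of zero-dimensional subschemes of the ambient space; in particular $\xi$ is big and nef on a general $Y$. Second, by Nakamaye's theorem \cite{Nak00} the augmented base locus $\mathbf{B}_+(\xi)$ equals the null locus of $\xi$, and the task is to bound \emph{effectively} the degree $d$ beyond which this null locus cannot dominate $Y$; this is precisely what \cref{effective nakamaye2} supplies, once one feeds in the degrees of the zero-dimensional members together with the self-intersection number $\xi^{\,n-1}$ and its partial analogues. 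Third, a general-position argument on $\mathbb{P}$, combined with semicontinuity of the augmented base locus and the existence of one smooth member, upgrades ``$\mathbf{B}_+(\xi)$ does not dominate $Y$'' to ``$\mathbf{B}_+(\xi) = \emptyset$ for a general member'', i.e.\ $\Omega_Y$ ample. Combining the threshold from the first move with the effective Nakamaye bound from the second, and taking the worst case over $n/2 \leq c \leq n$, yields $d_{\mathrm{Deb},n} = (2n)^{n+3}$.

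The main obstacle I anticipate lies in the second move: one must control how the degrees of the zero-dimensional subschemes---and the intersection numbers that enter \cref{effective nakamaye2}---grow with $n$ and $c$, and verify that the resulting bound is only singly exponential in $n$, of the shape $(2n)^{n+O(1)}$, rather than the doubly exponential $n^{n^2}$ obtained in \cite{Xie15} by a more explicit but less flexible route. A secondary difficulty is uniformity in $c$: since $c$ ranges over the whole interval $[n/2,\,n]$ and $\dim Y = n-c$ varies with it, every estimate must be arranged so that the worst case over this range still lands at the exponent $n+3$, and so that enough Wronskian sections survive down to $c = \lceil n/2 \rceil$.
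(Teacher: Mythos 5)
Your overall strategy -- effectivize the Brotbek--Darondeau construction by replacing its noetherian Nakamaye input with \cref{effective nakamaye2} and tracking constants -- is the same strategy the paper follows, although the paper packages it through \cref{k-jet} (the Fermat-type families, the Wronskian maps to products of Grassmannians, and the Zariski open property of \cref{Zariski}) specialized at $k=1$, with the slightly better value $\delta_0=2n-1$ imported from \cite[Corollary 2.9]{BD15}. However, two points in your plan are genuine gaps rather than anticipated difficulties. First, the step you defer as the ``main obstacle'' is the entire effective content of the theorem: nothing in your outline produces a threshold at all. In the paper the threshold comes from the explicit arithmetic of the construction, namely writing $d=\delta_0(r+k)+\ep$ with $k\leqslant\ep<k+\delta_0$ and $r>\sum_i b_i(k+1)(\ep_i+k\delta_i)$, where $b_i=\prod_j\delta_j^{k+1}/\delta_i$, which yields $d_0\leqslant \delta_0^{c(k+1)}c(k+1)^2(\delta_0+1)$. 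Second, and more seriously, your plan to ``take the worst case over $n/2\leqslant c\leqslant n$'' cannot land at the exponent $n+3$: in this type of construction the required degree grows like $\delta_0^{c(k+1)}$ through the twisting exponents $b_i$, so the worst case over the whole range of $c$ is of magnitude roughly $(2n)^{2n}$, not $(2n)^{n+3}$. The paper sidesteps this entirely by proving the statement only for $c=\lceil n/2\rceil$, $k=1$, and deducing every larger $c$ from the elementary fact that ampleness of the cotangent bundle is inherited by smooth closed subvarieties (so $H_1\cap\cdots\cap H_c\subset H_1\cap\cdots\cap H_{\lceil n/2\rceil}$ settles the remaining cases). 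Without this reduction, or some substitute making the estimate uniform in $c$, your third move does not give the stated bound.

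A smaller inaccuracy: \cref{effective nakamaye2} is not a statement about the null locus of the tautological class $\xi$ on $\mathbb{P}(\Omega_Y)$ to be ``fed'' with intersection numbers such as $\xi^{n-1}$. It is an explicit base-locus containment ${\rm Bs}\big(\ls(\mathbf{a})\boxtimes\oc_{\pb^n}(-1)_{\upharpoonright\ys_J}\big)\subset p_J^{-1}(G_J^\infty)$ on the universal family over products of Grassmannians, valid for the explicit exponents $a_i\geqslant\prod_j\delta_j^{k+1}/\delta_i$, and it is proved constructively, without invoking Nakamaye. In the proof it is pulled back through the Wronskian-induced maps $\Phi,\Psi$ of \cref{thm:variant} and combined with the avoidance lemma \cref{exceptional locus} (a dimension count resting on the Benoist/Brotbek--Darondeau codimension estimate, which is what forces $\delta_i\geqslant\dim\hat X_k$) to obtain nefness of a negatively twisted class on the general fiber; ampleness and \cref{Zariski} then conclude. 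These are precisely the places where the numerical constraints on $\delta_0$, $r$ and $\ep$ originate, and they are absent from your outline, so the claimed final bound is asserted rather than derived.
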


It is worth to  mention that the techniques in \cite{BD15} are more intrinsic and  the ideas of their proof brought  new geometric insights  in the understanding of the positivity of cotangent bundles.  Later,  Brotbek  \cite{Bro17} extended these techniques from the setting of symmetric differentials to that of higher order jet differentials, so that he  was able to prove a long-standing conjecture of  Kobayashi in \cite{Kob70}. 
\begin{thm}[Brotbek]\label{Brotbek}
	Let $X$ be a projective manifold of dimension $n$.  For any very ample line bundle $\as$ on $X$, there exists $d_{{\rm Kob},n}\in \mathbb{N}$ depending only on the dimension $n$  such that for  any $d\geqslant d_{{\rm Kob},n}$,   a general smooth hypersurface    $H\in |\as^d|$ is Kobayashi hyperbolic.  
\end{thm}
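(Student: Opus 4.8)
The plan is to prove \cref{Brotbek} by lifting the method behind \cref{bdx} from symmetric differentials to the full Demailly tower: one replaces $\Omega_X$ and its symmetric powers by the Demailly--Semple bundles $E_{k,m}\Omega_H$ of invariant jet differentials on the tower $\pi_k\colon H_k\to H$, which carries the tautological line bundle $\oc_{H_k}(1)$ with $(\pi_k)_*\oc_{H_k}(m)=E_{k,m}\Omega_H$. Two structural facts organize the argument. First, the \emph{Fundamental Vanishing Theorem} of Green--Griffiths and Demailly: if $A$ is ample on $H$ and $0\neq\omega\in H^0(H,E_{k,m}\Omega_H\otimes A^{-1})$, then the lifted $k$-jet $f_{[k]}$ of every entire curve $f\colon\cb\to H$ lies in the zero locus $Z(\omega)\subset H_k$. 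Second, it follows that $H$ is Kobayashi hyperbolic as soon as one produces, for some $k$, a finite family of global jet differentials on $H$ valued in an anti-ample line bundle whose common zero locus $Z\subset H_k$ has proper image $\pi_k(Z)\subsetneq H$, that image being a finite union of subvarieties on each of which one again rules out entire curves, by induction on dimension.

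I would construct these jet differentials by Brotbek's Wronskian method. Because hyperbolicity is open in smooth families of compact manifolds (a limit of Brody curves on nearby members produces one on the special member), it suffices to exhibit a single hyperbolic member of $|\as^d|$: take $H$ of \emph{Fermat--Brotbek form} $H=\{\sum_{i=0}^{N}a_i\,\sigma_i^{\,\epsilon}=0\}\subset X$, with $\sigma_0,\dots,\sigma_N\in H^0(X,\as)$ general, $a_i\in H^0(X,\as^{\,d-\epsilon})$ general, and $d$ bound to $(\epsilon,k)$ by a suitable divisibility relation. On the jet tower over $X$ one forms the Wronskians $W_{i_0\cdots i_k}=W(\sigma_{i_0}^{\epsilon},\dots,\sigma_{i_k}^{\epsilon})$, which are twisted invariant jet differentials of order $k$ and weighted degree $\binom{k+1}{2}$ on $X$, valued in a power of $\as$. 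Differentiating the defining equation of $H$ forces each $W_{i_0\cdots i_k}$ to vanish to a prescribed order along $H_k$, so that its restriction defines a section of $\oc_{H_k}(a)\otimes\pi_k^*\as^{-b}$ for explicit $a,b=b(\epsilon,k,d)$: a jet differential of order $k$ and weight $a$ on $H$ valued in $\as^{-b}$, which is anti-ample once the parameters are arranged so that $b>0$. Taking $k$ of the order of $\dim H$ and unwinding these numerical inequalities is what yields the explicit lower bound $d_{{\rm Kob},n}$.

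The crux is the \emph{common base locus} of $\{W_{i_0\cdots i_k}\}_{i_0<\cdots<i_k}$ on $H_k$, for $H$ general in the Fermat--Brotbek family: one must show it sits inside the locus over $\bigcup_i\big(H\cap\{\sigma_i=0\}\big)$ together with a vertical Wronskian-degeneracy locus in the tower. The way to get there is to show that a suitable twist $\oc_{H_k}(1)\otimes\pi_k^*\as^{-\delta}$ is nef and big on $H_k$ with augmented base locus equal to a prescribed null locus contained in that boundary; since the Wronskians cut out, fibrewise over $H$, zero-dimensional subschemes, this is precisely the setting of Nakamaye's theorem identifying the augmented base locus of a nef and big line bundle with its null locus, applied uniformly over the family --- and it is the \emph{effective} form of this, \cref{effective nakamaye2}, the same engine behind \cref{main:Debarre}, that turns the bare existence of $d_{{\rm Kob},n}$ into an explicit number. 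With this in hand the induction closes: an entire curve $f\colon\cb\to H$ has $f_{[k]}$ in the common base locus, so $f(\cb)\subset H\cap\{\sigma_i=0\}$ for some $i$; but $H\cap\{\sigma_i=0\}$ is again a general Fermat--Brotbek hypersurface of degree $d$ in the projective manifold $\{\sigma_i=0\}$, of dimension one less and with induced very ample polarization --- so the inductive hypothesis applies --- while the base of the induction is curves of high genus (or points), which admit no entire curves. (One may alternatively terminate the induction once the iterated boundary reaches the Debarre range $\tfrac n2\le c\le n$, where \cref{bdx} already provides ample cotangent bundle, hence hyperbolicity.)

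The main obstacle, exactly as for \cref{main:Debarre}, is the base-locus control in the third step: proving that the Wronskian jet differentials, as $H$ moves in the Fermat--Brotbek family, have common zero locus no larger than the expected boundary, and proving it with explicit constants. This is the family version of Nakamaye's theorem, and supplying an effective form of it (\cref{effective nakamaye2}) --- in place of the non-quantitative noetherianity argument used in \cite{Bro17} --- is the technical heart of the whole approach, and the reason $d_{{\rm Kob},n}$ can be written down at all.
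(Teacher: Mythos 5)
Your proposal assembles many of the right ingredients (Fermat-type members, Wronskian jet differentials, the fundamental vanishing theorem, and a Nakamaye-type base-locus control on the universal family), but it has a genuine gap at the very first reduction. You claim that ``hyperbolicity is open in smooth families (a limit of Brody curves on nearby members produces one on the special member), so it suffices to exhibit a single hyperbolic member.'' Brody's reparametrization only gives openness of hyperbolicity in the \emph{Euclidean} topology of the parameter space; the theorem asserts hyperbolicity of a \emph{Zariski}-general member of $|\as^d|$, and hyperbolicity itself is not known to be Zariski open. This is precisely the point of Brotbek's \emph{strong Zariski open property}: what one proves for the special Fermat-type member is not hyperbolicity but the algebraic positivity condition \eqref{eq:star} --- ampleness of $\nu_k^*\oc_{X_k}(a_k,\ldots,a_1)\otimes\oc_{\hat X_k}(-a_0F)$ on the blow-up of the jet tower along the Wronskian ideal --- and it is this condition that propagates to a Zariski open set of parameters (\cref{fonctorial}, in particular \cref{Zariski}), yielding almost $k$-jet ampleness (\cref{almost k ample}) and hence hyperbolicity of the general hypersurface. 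Without this (or some substitute Zariski-open algebraic property), your argument only gives hyperbolicity for a Euclidean neighborhood of the special member, which is strictly weaker than the statement.

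The endgame of your proposal also diverges from the actual proof in a way that would not close. In Brotbek's argument there is no induction on dimension: the base-locus control is not ``the common zero locus of the Wronskians lies over $\bigcup_i (H\cap\{\sigma_i=0\})$,'' but rather a nefness statement obtained curve by curve. Any irreducible curve $C\subset\hat H_{\af,k}$ lives over a unique stratum $X_J$ of the vanishing pattern of the $\tau_j$'s (whose total degeneracy locus $\Sigma$ is a finite set), the morphism $\Psi$ sends it into $\ys_J$, the avoidance lemma (\cref{exceptional locus}, for generic coefficients $\af$) keeps $\Phi(C)$ away from $G_J^\infty$, and the Nakamaye-type control (\cref{weak nakamaye}, or \cref{effective nakamaye2} in the complete-intersection case) forces $C\cdot\nu_k^*\big(\oc_{X_k}(bk')\otimes\pi_{0,k}^*\as^{-q}\big)\otimes\oc_{\hat X_k}(-bF)\geqslant 0$; adding an ample class gives \eqref{eq:star}, and hyperbolicity follows from \cref{fundamental} because the augmented base locus sits inside $H_k^{\rm sing}$, which contains no lift of a nonconstant entire curve. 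Your alternative --- restricting to $H\cap\{\sigma_i=0\}$ and invoking an inductive hypothesis --- would additionally require the restricted coefficient data to remain \emph{general} for the lower-dimensional family, which does not follow from genericity of the original data; so even setting aside the first gap, the induction as stated does not go through. Finally, note that the effective Nakamaye theorem is the ingredient for the explicit value of $d_{{\rm Kob},n}$ in \cref{main:Kobayashi}; the bare existence statement of \cref{Brotbek} already follows from Brotbek's noetherianity arguments, so your framing of \cref{effective nakamaye2} as indispensable for existence slightly misplaces its role.
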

The proof of \cref{Brotbek} in \cite{Bro17}  is also \og almost\fg effective on $d_{{\rm Kob},n}$  because of two noetherianity arguments:  the first concerns the increasing  sequences of Wronskians ideal sheaves;  the second  concerns a constant arising in   Nakamaye's theorem as that of \cite{BD15},  which can be made effective by \cref{weak nakamaye}.  Our second goal  of the present  paper is  to give an intrinsic interpretation of Brotbek's Wronskians (see \cref{brotbek wronskian}), and as a byproduct, we can   render the above-mentioned first  noetherianity argument effective. This in turn provides  effective lower bounds  for the Kobayashi conjecture in combination with the explicit formula of $d_{{\rm Kob},n}$ in \cite{Bro17}.
 \begin{thmx}\label{main:Kobayashi}
 	In the same setting as \cref{Brotbek},  one can take
 	$$d_{{\rm Kob},n}=  n^{2n+3}(n+1).$$
 \end{thmx}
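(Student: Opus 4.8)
The plan is to re-run Brotbek's proof of \cref{Brotbek} from \cite{Bro17} essentially verbatim, to isolate the two places where it is only \og almost\fg effective, and to substitute for each of them a quantitative statement proved earlier in this paper; one then tracks the resulting constants through Brotbek's explicit expression for $d_{{\rm Kob},n}$ and optimizes.

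Recall the shape of Brotbek's argument. For a general $X\in|\as^d|$ --- realized as a suitable deformation of a Fermat-type hypersurface with moving coefficients --- Brotbek constructs, for a jet order $k$ comparable to $n$ and an explicit weight $m$, a family of Wronskian jet differentials $W_I$, indexed by tuples $I$ of sections, which are global sections of $E_{k,m}\Omega_X\otimes A^{-1}$ for an ample line bundle $A$. By the fundamental vanishing theorem, every entire curve $f\colon\mathbb{C}\to X$ has image in the common zero locus of the $W_I$, and Brotbek shows that this locus is a fixed proper subvariety of $X$, so that $f$ is constant. For the construction to work, $d$ must be large enough for three things: enough Wronskians $W_I$ to exist with the required positivity; the increasing sequence of Wronskian ideal sheaves $\mathcal{W}_1\subseteq\mathcal{W}_2\subseteq\cdots$ to have already stabilized (the first noetherianity argument); and an augmented base locus appearing in the analysis of the common zero locus to coincide with the associated null locus (the second noetherianity argument, of Nakamaye type, exactly as in \cite{BD15}). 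Everything but these last two inputs is already explicit in \cite{Bro17}.

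For the Nakamaye step I would invoke \cref{weak nakamaye} (see also \cref{effective nakamaye2}), applied to the relevant nef and big line bundle on the universal family of the hypersurfaces --- respectively on the corresponding family of projectivized Demailly jet bundles: this replaces the non-effective noetherian constant by an explicit multiple, governed by the top self-intersection and a handful of auxiliary intersection numbers, all polynomial in $d$ and $n$, and one propagates that dependence to the end. For the stabilization of the chain $\mathcal{W}_\bullet$ I would use the intrinsic description of Brotbek's Wronskians from \cref{brotbek wronskian}: once $W_I$ is identified with a canonical section and $\mathcal{W}_k$ with the base ideal of an explicit linear subsystem of a line bundle on the Demailly--Semple tower, the question of when $\mathcal{W}_k$ stops growing becomes the question of when this linear system scheme-theoretically cuts out its base locus, which an effective generation / Castelnuovo--Mumford regularity estimate answers with a stabilization index polynomial in $n$. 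This converts the pure noetherianity argument into a quantitative one.

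With both inputs effective, every constant entering Brotbek's formula for $d_{{\rm Kob},n}$ is explicit; substituting the bounds above, choosing the jet order $k$ and the twist $m$ as economically as the argument allows, and simplifying yields the stated value $d_{{\rm Kob},n}=n^{2n+3}(n+1)$. The main obstacle is the stabilization of the Wronskian ideal sheaves: the bare noetherianity argument gives no bound at all, so the crux is to turn the intrinsic interpretation of the $W_I$ into a genuinely effective estimate; moreover, pushing the final exponent down to $2n+3$ forces one to be frugal at every stage --- in the number and positivity of the Wronskians used, and in the Nakamaye multiple --- so that assembling the pieces without losing more than a polynomial factor at each junction is itself delicate.
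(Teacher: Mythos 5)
Your overall skeleton coincides with the paper's: keep Brotbek's proof of \cref{Brotbek} intact, make its two noetherianity inputs effective, and substitute them into his closed formula for $d_{{\rm Kob},n}$. However, as written the proposal has a genuine gap at the first of these inputs, and it never produces the quantitative data from which the stated bound actually comes. For the stabilization of the Wronskian ideal sheaves you propose to show, via an unspecified ``effective generation / Castelnuovo--Mumford regularity estimate'', that the relevant linear system scheme-theoretically cuts out its base locus. That is not the question one needs to answer, and no such estimate is formulated, let alone proved. What makes this step effective in the paper (\cref{wronskian result}) is the factorization of Brotbek's Wronskian through the bundle of $k$-jets of sections established in \cref{brotbek wronskian}: one always has $\wk(X_k,L)_{\upharpoonright U_k}\subset \wk_{k,U}$, with equality as soon as $L$ separates $k$-jets everywhere, since then $\Lambda^{k+1}H^0(X,L)$ surjects onto $\Lambda^{k+1}J^k\oc_x$ at every point. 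Because $A^k$ separates $k$-jets for $A$ very ample, this gives the exact value $m_{\infty}(X_k,A)=k$ (so $n-1$ for $k=n-1$); a regularity-type argument is neither what is used nor sketched in enough detail to replace it.

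The second gap is arithmetic. Brotbek's formula reads $d_{{\rm Kob},n}= m_{\infty}(X_k,\as)+\delta+(R+k)\delta$ with $R=M(k+1)\big(m_{\infty}(X_k,\as)+\delta-1+k\delta\big)+1$, where $k=n-1$ and $\delta=n^2$, and the effective Nakamaye constant supplied by \cref{weak nakamaye} is $M=\delta^{k}=n^{2(n-1)}$, which is exponential in $n$ rather than ``polynomial in $d$ and $n$'' as you assert (note also that $\delta$ is a fixed auxiliary parameter, not the degree $d$). Without these explicit values of $m_{\infty}(X_k,\as)$ and $M$, and without the final estimate $k+\delta+\delta\big(\delta^{k}(k+1)(k+\delta-1+k\delta)+1+k\big)\leqslant n^{2n+3}(n+1)$, the bound in the statement is not established; the assertion that one ``simplifies'' to $n^{2n+3}(n+1)$ is precisely the content that is missing.
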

Let us mention that  in \cite{Bro17} Brotbek obtained a much stronger result than \cref{Brotbek}. Indeed, he proved that for the hypersurface $H$ in \cref{Brotbek}, the tautological line bundle $\oc_{H_k}(a_k,\ldots,a_1)$ on the \emph{Demailly-Semple $k$-jet tower} $H_k$ of the direct manifold $(H,T_H)$ is \og almost ample\fg for some $(a_1,\ldots,a_k)\in \mathbb{N}^k$ when $k\geqslant n-1= {\rm dim}\, H$. In view of the following vanishing theorem by Diverio in \cite{Div08}, the above-mentioned lower bound for $k$ in \cite{Bro17}  is optimal.
\begin{thm}[Diverio]
	Let $Z\subset \pb^n$ be a smooth complete intersection of  hypersurfaces of any degree in $\pb^n$. Then
	$$
	H^0(Z,E_{k,m}^{\rm GG}T_Z^*)=0
	$$
	for all $m\geqslant1$ and $1\leqslant k<{\rm dim}(Z)/{\rm codim}(Z)$. Here $E_{k,m}^{\rm GG}T_Z^*$ denotes the \emph{Green-Griffiths jet bundle} of order $k$ and weighted degree $m$.
\end{thm}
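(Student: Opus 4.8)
The plan is to strip off the jet-theoretic structure via the standard filtration of the Green--Griffiths bundle, reducing the statement to a vanishing theorem for Schur powers of $\Omega_Z^1 = T_Z^*$. Recall that $E_{k,m}^{\rm GG}T_Z^*$ carries a filtration whose associated graded bundle is
\[
\bigoplus_{\ell_1 + 2\ell_2 + \cdots + k\ell_k = m}\ S^{\ell_1}\Omega_Z^1 \otimes S^{\ell_2}\Omega_Z^1 \otimes \cdots \otimes S^{\ell_k}\Omega_Z^1 .
\]
Since a vector bundle admitting a filtration whose successive quotients have no global sections has itself no global sections, it suffices to prove $H^0\!\bigl(Z,\ S^{\ell_1}\Omega_Z^1 \otimes \cdots \otimes S^{\ell_k}\Omega_Z^1\bigr)=0$ for every $k$-tuple $(\ell_1,\dots,\ell_k)$ with $\ell_1 + 2\ell_2+\cdots+k\ell_k = m\geqslant 1$, in particular with $\ell_1+\cdots+\ell_k\geqslant 1$. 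Over $\cb$, the Pieri rule applied iteratively gives $S^{\ell_1}\Omega_Z^1 \otimes \cdots \otimes S^{\ell_k}\Omega_Z^1 \cong \bigoplus_\lambda (\mathbb{S}^\lambda\Omega_Z^1)^{\oplus N_\lambda}$, where every partition $\lambda$ occurring has at most $k$ rows and $|\lambda| = \ell_1+\cdots+\ell_k\geqslant 1$, so $\lambda\neq 0$. Hence the theorem is equivalent to the assertion: \emph{for a smooth complete intersection $Z\subset\pb^n$ of dimension $d$ and codimension $c$, and every nonzero partition $\lambda$ with at most $k$ rows where $k<d/c$, one has $H^0(Z,\mathbb{S}^\lambda\Omega_Z^1)=0$}. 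This is exactly where the hypothesis $1\leqslant k<\dim Z/\operatorname{codim} Z$ enters: it guarantees that only Schur functors indexed by partitions with strictly fewer than $d/c$ rows can occur.

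This last assertion is the vanishing theorem of Br\"uckmann--Rackwitz for $T$-symmetric tensor forms on complete intersections, which one proves by induction on the codimension $c$. The base case $c=0$, i.e.\ $Z=\pb^n$, is Bott's formula; for $H^0$ alone it already follows from the inclusion $\Omega_{\pb^n}^1\hookrightarrow\oc(-1)^{\oplus(n+1)}$ furnished by the Euler sequence, which forces $H^0(\pb^n,\mathbb{S}^\lambda\Omega_{\pb^n}^1(t))=0$ for every $t<|\lambda|$. For the inductive step one writes $Z=Z'\cap H$ with $Z'$ a smooth complete intersection of codimension $c-1$ in $\pb^n$ and $H$ a hypersurface of degree $e=d_c$ meeting $Z'$ transversally, so that $Z\subset Z'$ is a smooth divisor with $N^*_{Z/Z'}=\oc_Z(-e)$. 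The conormal sequence $0\to\oc_Z(-e)\to\Omega_{Z'}^1|_Z\to\Omega_Z^1\to 0$ filters $\mathbb{S}^\lambda(\Omega_{Z'}^1|_Z)$ with successive quotients $\mathbb{S}^\mu\Omega_Z^1\otimes\oc_Z\!\bigl(-(|\lambda|-|\mu|)e\bigr)$ for various subpartitions $\mu\subseteq\lambda$, the term $\mu=\lambda$ being $\mathbb{S}^\lambda\Omega_Z^1$ itself as the top quotient; comparing with the restriction sequence $0\to\mathbb{S}^\lambda\Omega_{Z'}^1(-e)\to\mathbb{S}^\lambda\Omega_{Z'}^1\to\mathbb{S}^\lambda(\Omega_{Z'}^1|_Z)\to 0$ and chasing the long exact cohomology sequences reduces the vanishing on $Z$ to a finite list of twisted cohomology vanishings on $Z'$ (and on $Z$, for the negatively twisted pieces $\mathbb{S}^\mu\Omega_Z^1(-je)$), which the inductive hypothesis is arranged to deliver.

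I expect the main obstacle to be carrying out this induction correctly. One has to formulate the inductive statement in a sufficiently strong twisted form --- something of the shape $H^q(Z,\mathbb{S}^\lambda\Omega_Z^1(t))=0$ for $q$ in a suitable range and $t$ below an explicit threshold depending on $\operatorname{len}(\lambda)$, $\dim Z$, $\operatorname{codim} Z$ and the multidegree --- and verify that the numerical hypothesis $\operatorname{len}(\lambda)<\dim Z/\operatorname{codim} Z$ is exactly what keeps this threshold above all the twists produced by the two reduction steps above; this requires Bott's formula on $\pb^n$ in \emph{every} cohomological degree, not merely in degree $0$, together with a careful accounting of how much negativity is consumed at each descent in the codimension. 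The filtration-and-Pieri reduction of the first paragraph, by contrast, is entirely routine, so the whole weight of the proof sits in this cohomological bookkeeping on complete intersections; in practice one simply invokes the Br\"uckmann--Rackwitz theorem and substitutes it into that reduction, which is the route of Diverio's original argument.
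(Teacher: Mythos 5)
Your proposal is correct and follows essentially the same route as Diverio's original argument in \cite{Div08}, which the paper only quotes without reproving: filter $E_{k,m}^{\rm GG}T_Z^*$ so that the graded pieces are tensor products of symmetric powers of $T_Z^*$, decompose these via Pieri into Schur powers indexed by nonzero partitions with at most $k<{\rm dim}(Z)/{\rm codim}(Z)$ rows, and invoke the Br\"uckmann--Rackwitz vanishing theorem for such Schur powers on smooth complete intersections. Your sketch of how one would reprove Br\"uckmann--Rackwitz (conormal-sequence filtrations plus Bott's formula, with the caveat that the subpartitions occurring are those with $\lambda/\mu$ a horizontal strip) is only a plausibility outline, but since you ultimately use that theorem as a black box, exactly as Diverio does, this does not affect the correctness of the argument.
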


Motivated by the above vanishing theorem, in the same vein as the Debarre conjecture,  Diverio-Trapani  proposed  the following generalized conjecture in \cite{DT10}.\begin{conjecture}[Diverio-Trapani]\label{DT}
	Let $Z\subset \pb^n$
	be the complete intersection of $c$-general hypersurfaces of sufficiently high degree. Then the \emph{invariant jet bundle} $E_{k,m}T_Z^*$ is
	ample provided that $k\geqslant \frac{n}{c}-1$ and $m\gg 0$.   
\end{conjecture}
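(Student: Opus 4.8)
The plan is to deduce \cref{DT} from an ampleness statement for a tautological line bundle on a Demailly-Semple jet tower, by a common generalisation of the arguments behind \cref{bdx} --- the case $k=1$, $n/2\leqslant c\leqslant n$ --- and \cref{Brotbek} --- the case $c=1$, $k\geqslant n-1$ --- made effective with the tools developed in this paper. Write $Z=H_1\cap\cdots\cap H_c\subset\pb^n$ with $H_i\in|\as^{d_i}|$ and $\df=(d_1,\dots,d_c)$, and let $\pi_k\colon Z_k\to Z$ be the Demailly-Semple $k$-jet tower of the directed manifold $(Z,T_Z)$, carrying the tautological line bundles $\oc_{Z_k}(\af)$ for $\af\in\nb^k$. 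It is a standard fact that $E_{k,m}T_Z^*$ is, up to a fixed power, the $\pi_k$-direct image of a power of the tautological bundle, and that, once the components of $\af$ are taken in the ratios needed for relative positivity over $Z$ (as in \cref{Brotbek}), ampleness of $\oc_{Z_k}(\af)$ on $Z_k$ for one such $\af$ forces ampleness of $E_{k,m}T_Z^*$ for all sufficiently divisible $m$. So the entire problem is reduced to showing that, for a $c$-general $Z$ of sufficiently high multidegree, the tautological bundle $\oc_{Z_k}(\af)$ on $Z_k$ is ample for one admissible weight $\af$.

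\emph{Step 1 (sections and positivity).} Fix first an explicit special complete intersection $Z$ --- of Fermat type, say --- and high degrees $d_i$. Using Brotbek's Wronskian sections in the intrinsic form of \cref{brotbek wronskian}, I would produce a large family of global sections of $\oc_{Z_k}(\af)\otimes\pi_k^*\as^{-\epsilon}$ for a suitable $\epsilon>0$; the point of the intrinsic description is precisely that it replaces the noetherianity argument on the increasing chain of Wronskian ideal sheaves in the proof of \cref{Brotbek} by an explicit bound, thereby making the construction effective. One then studies the common zero locus $\Sigma\subset Z_k$ of these sections, together with the locus $\Delta_k\subset Z_k$ of non-regular $k$-jets, shows that $\Sigma\cup\Delta_k$ is a proper closed subset, and that $\oc_{Z_k}(\af)\otimes\pi_k^*\as^{-\epsilon}$ is globally generated on its complement. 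A stratification argument --- running the same construction on the lower-dimensional strata of $\Sigma\cup\Delta_k$ --- then upgrades this to nefness of $\oc_{Z_k}(\af)$ on all of $Z_k$, while bigness follows from the abundance of Wronskian sections together with a Demailly-type holomorphic Morse inequality. The hypothesis $k\geqslant n/c-1=\dim Z/\operatorname{codim} Z$ enters exactly here: it is the condition under which there are enough Wronskians --- equivalently, by Diverio's vanishing theorem, enough jet differentials --- for $\Sigma$ to be small.

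\emph{Step 2 (emptiness of $\mathbb{B}_+$ and genericity).} Now $L:=\oc_{Z_k}(\af)$ is big and nef, so by Nakamaye's theorem its augmented base locus $\mathbb{B}_+(L)$ equals the null locus, the union of all subvarieties $V\subset Z_k$ with $L^{\dim V}\cdot V=0$. Running this argument in families over the parameter space of complete intersections of multidegree $\df$, and invoking the effective form of Nakamaye's theorem established in this paper (\cref{effective nakamaye2}, resp.\ \cref{weak nakamaye}) --- which removes the second noetherianity argument of \cite{BD15} and \cite{Bro17} --- together with the description of $\Sigma\cup\Delta_k$ from Step 1 and a dimension count, I would show that no such $V$ can exist, i.e.\ $\mathbb{B}_+(L)=\emptyset$ and $L$ is ample. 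Since ampleness is an open condition in families, the conclusion passes from the explicit $Z$ to a $c$-general complete intersection of the same multidegree; and keeping track throughout of the admissible ratios in $\af$, of $\epsilon$, of the lower bounds on the $d_i$, and of the constant furnished by \cref{effective nakamaye2} yields an explicit function of $n$, $c$, $k$ bounding the degrees, in the spirit of $(2n)^{n+3}$ and $n^{2n+3}(n+1)$ from \cref{main:Debarre} and \cref{main:Kobayashi}.

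The main obstacle is Step 1: one must control the common zero locus of the Wronskian sections, prove that it together with the non-regular-jet locus is a proper subvariety, and carry out the inductive, stratified verification of nefness --- all while keeping the combinatorics of the weight $\af$ and of the degrees $\df$ completely explicit, and, unlike the two extreme cases $c=1$ and $k=1$ treated before, doing so uniformly over the whole range $n/(k+1)\leqslant c\leqslant n$. A secondary, more technical difficulty is the bookkeeping required to turn the effective Nakamaye estimate of \cref{effective nakamaye2} into a clean closed-form lower bound on the degrees.
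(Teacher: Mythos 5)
The statement you are proving is \cref{DT} itself, which in this paper is an \emph{open conjecture}: the paper does not prove it, but only a strictly weaker result, namely \cref{main} (almost $k$-jet ampleness of $Z$ in the sense of \cref{almost k ample}, i.e.\ $\mathbf{B}_+\big(\oc_{Z_k}(a_k,\ldots,a_1)\big)\subset Z_k^{\rm sing}$) and \cref{main 3} (an ample \emph{subbundle} $\mathscr{F}\subsetneq E_{k,m}T_Z^*$ whose sections detect all regular jets). So your proposal should be measured against that fact, and indeed it contains the precise gap that prevents the paper from reaching the full conjecture.

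The fatal point is in your Step 1. All sections you can produce by Brotbek's Wronskian construction lie in the linear subsystem whose base ideal is the Wronskian ideal sheaf $\mathfrak{w}(Z_k,L^m)$, and by \cref{wronskian result} this chain stabilizes to the intrinsic asymptotic ideal $\mathfrak{w}_{\infty}(Z_k)$, whose cosupport is contained in, and in general nonempty inside, $Z_k^{\rm sing}$; it is determined fibrewise by the fixed ideal $\mathfrak{I}_{n-c,k}\subset\oc_{\rc_{n-c,k}}$ and cannot be shrunk by taking more sections or higher powers of $L$. Consequently Wronskian sections can never globally generate, nor even make nef in the required strong sense, the bundle $\oc_{Z_k}(\af)$ \emph{along the singular jets}: your claimed upgrade ``to nefness of $\oc_{Z_k}(\af)$ on all of $Z_k$'' and the subsequent conclusion $\mathbf{B}_+\big(\oc_{Z_k}(\af)\big)=\emptyset$ cannot be reached by these tools. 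This is exactly why the paper blows up $\mathfrak{w}_{\infty}$ (\cref{fonctorial}) and proves positivity only for $\nu_k^*\oc_{Z_k}(\af)\otimes\oc_{\hat{Z}_k}(-a_0F)$ on $\hat Z_k$; pushing forward, one obtains the ample subsheaf $\mathscr{F}=(\pi_{0,k})_*\big(\oc_{Z_k}(a_k,\ldots,a_1)\otimes\mathfrak{J}_q\big)$ of \cref{main 3}, with an ideal twist supported on $Z_k^{\rm sing}$, and \emph{not} the full $E_{k,m}T_Z^*$. Your opening reduction is also unproved: it is not a ``standard fact'' that ampleness of one $\oc_{Z_k}(\af)$ on the Demailly--Semple tower implies ampleness of $E_{k,m}T_Z^*$ for $m\gg0$ (the tower is an iterated projectivization, not $\pt(E_{k,m}T_Z^*)$, and $(\pi_{0,k})_*\oc_{Z_k}(\af)$ is only a subbundle of $E_{k,m}T_Z^*$ by \cite[Proposition 6.16]{Dem95}), so even granting Step 1 the conjecture would not follow without a further argument. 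Step 2 as written also overstates what the paper's effective Nakamaye statements give: \cref{weak nakamaye} and \cref{effective nakamaye2} concern the specific universal families $\ys_J$ over products of Grassmannians, not an effective Nakamaye theorem for $\oc_{Z_k}(\af)$ on $Z_k$. In short, your outline reproduces the architecture of \cref{main}/\cref{main 3} but the passage from ``positivity away from $Z_k^{\rm sing}$, after the Wronskian blow-up'' to ``ampleness of $E_{k,m}T_Z^*$'' is precisely the missing idea, and it is the reason \cref{DT} remains a conjecture here.
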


The last aim of the present paper is to study \Cref{DT} using geometric methods in \cite{BD15,Bro17}.  
\begin{thmx}\label{main}
	Let $X$ be an $n$-dimensional projective manifold  equipped with a very ample line bundle $\as$, and let $Z\subset X$
	be the complete intersection of $c$-general hypersurfaces  $H_1 \ldots,H_c\in \vert \as^{d} \lvert$. Then  $Z$ is \emph{almost $k$-jet ample}  (see  \cref{almost k ample}) if   $k\geqslant \frac{n}{c}-1$, and     $d \geqslant 2c n^{c\lceil \frac{n}{c}\rceil+1}\cdot \lceil \frac{n}{c}\rceil^{c\lceil \frac{n}{c}\rceil+3} $.
  In particular, $Z$ is Kobayashi hyperbolic.
\end{thmx}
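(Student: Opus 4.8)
The plan is to carry out a common generalisation of the arguments of \cite{BD15} and \cite{Bro17}, rendered effective by the two new ingredients highlighted in the introduction: the explicit Nakamaye estimate of \cref{effective nakamaye2} and the intrinsic description of Brotbek's Wronskians of \cref{brotbek wronskian}. Set $\kappa := \lceil n/c \rceil$; the hypothesis $k \geqslant n/c - 1$ is the same as $c(k+1) \geqslant n$, and since enlarging $k$ only makes life easier it suffices to treat $k = \kappa - 1$. Let $\pi_k \colon Z_k \to Z$ denote the Demailly--Semple $k$-jet tower of $Z$, with tautological bundles $\oc_{Z_k}(\mathbf{a})$, $\mathbf{a} \in \nbs^k$. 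Almost $k$-jet ampleness (\cref{almost k ample}) amounts to producing a weight $\mathbf{a}$ and an integer $\ell \in \nbs$ for which $L := \oc_{Z_k}(\mathbf{a}) \otimes \pi_k^* \as^{-\ell}$ is big with augmented base locus $\mathbb{B}_+(L)$ not dominating $Z$; since $(\pi_k)_* \oc_{Z_k}(m)$ is essentially $E_{k,m}T_Z^*$, this is the $\mathbb{B}_+$-version of \Cref{DT}. Granting it, Kobayashi hyperbolicity will follow from the Green--Griffiths--Demailly fundamental vanishing theorem together with the genericity argument of the last step. The extremes $c \geqslant n/2$ (so $\kappa = 2$, $Z_1 = \pb(T_Z)$) and $c = 1$ (so $\kappa = n$, $Z$ a hypersurface and $k = n - 1$) are the situations of \cref{bdx} and \cref{Brotbek}. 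Throughout, $H_1,\ldots,H_c$ are chosen general in $|\as^d|$, so that $Z$ and all the incidence varieties below are smooth and the transversalities invoked hold.

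First I would construct the jet differentials. Realising $Z$ as a fibre of the universal complete intersection of $c$ hypersurfaces of degree $d$ in $X$ and differentiating the defining sections up to order $k$, Brotbek's construction produces, for a suitable weight $\mathbf{a}$ and weighted degree $m$, a large family of global sections of $\oc_{Z_k}(\mathbf{a}) \otimes \pi_k^* \as^{-p}$, where $p$ is an explicit positive integer of size comparable to $d$. The inequality $c(k+1) \geqslant n$ is precisely what guarantees that enough of these are non-trivial. The role of \cref{brotbek wronskian} is to present these sections as honest Wronskians of the pulled-back defining sections instead of through the iterated coordinate expressions of \cite{Bro17}; this exhibits the ideal sheaf they generate---hence their common base locus---explicitly and shows it stabilises \emph{without} a noetherianity argument, which is exactly what makes $m$, $\mathbf{a}$ and the admissible twist $\ell$ effective in $n$, $c$, $d$. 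In particular $L = \oc_{Z_k}(\mathbf{a}) \otimes \pi_k^* \as^{-\ell}$ is big for every $\ell \leqslant p$, and an elementary estimate on the jet tower, as in \cite{Bro17}, makes $L$ nef for a suitable choice of $\mathbf{a}$.

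Next I would locate $\mathbb{B}_+(L)$. The intrinsic description shows that the common zero locus of the Wronskian jet differentials is contained in a \emph{Wronskian locus} $W \subset Z_k$, whose image in $Z$ is the set of points where the $k$-jets of the $c$ defining sections fail to be in suitably general position; for $c$-general $H_i$ a transversality count in the universal family shows that $W$ is a proper subvariety of $Z_k$ not dominating $Z$. Now comes the effective Nakamaye theorem: the intersection numbers of the nef and big bundle $L$ on $Z_k$ are polynomials in $d$ and $n$ of controlled degree, and \cref{effective nakamaye2}, applied to $L$, identifies $\mathbb{B}_+(L)$ with the null locus of $L$ and places the latter inside $W$ once $d$ is large enough for its quantitative hypotheses to hold. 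Tracing through that size requirement is exactly what produces the bound $d \geqslant 2c\, n^{c\kappa + 1}\kappa^{c\kappa + 3}$, and finishes the proof that $Z$ is almost $k$-jet ample.

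The remaining step---and the point where the main difficulty concentrates---is the passage from almost ampleness to genuine Kobayashi hyperbolicity, i.e.\ excluding that $W$ contains the $k$-jet lift of a non-constant entire curve. Here one exploits, as in \cite{BD15,Bro17}, that $c$ independent hypersurfaces are at hand: running the construction with Wronskians built from several subfamilies among $N \gg 0$ general hypersurfaces containing the $H_i$ yields a whole collection of bad loci, and a non-constant $f \colon \cb \to Z$ would, by the fundamental vanishing theorem, have to lift into each of them; their intersection maps onto a proper subvariety $\Sigma \subsetneq Z$ which is again (a deformation of) a complete intersection of general hypersurfaces satisfying the same numerical hypotheses, so a descent on $\dim Z$---vacuous once $\dim Z = 0$---shows $\Sigma$ is Kobayashi hyperbolic and forces $f$ to be constant. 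The genuinely delicate points are (a) making the intrinsic Wronskian description precise enough that $W$ is provably non-dominant for general $H_i$, and (b) controlling every constant through the jet-tower intersection theory so that the effective Nakamaye inequality closes exactly at the stated degree rather than at a worse one.
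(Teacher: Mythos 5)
There is a genuine gap, and it sits exactly where you locate ``the main difficulty.'' First, your target is weaker than what \cref{almost k ample} demands: almost $k$-jet ampleness requires a big $\oc_{Z_k}(a_k,\ldots,a_1)$ with $\mathbf{B}_+\big(\oc_{Z_k}(a_k,\ldots,a_1)\big)\subset Z_k^{\rm sing}$, i.e.\ the augmented base locus must lie inside the divisor of \emph{singular} $k$-jets (which does dominate $Z$), not merely fail to dominate $Z$. With the stronger containment, Kobayashi hyperbolicity is immediate from \cref{fundamental}, because the lift $f_{[k]}$ of a nonconstant entire curve is never contained in $Z_k^{\rm sing}$; no descent is needed. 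Your substitute final step --- intersecting the bad loci coming from several auxiliary subfamilies and claiming the resulting $\Sigma\subsetneq Z$ is again ``(a deformation of) a complete intersection of general hypersurfaces satisfying the same numerical hypotheses,'' then inducting on dimension --- does not work: the image of such loci has no reason to be a complete intersection of general hypersurfaces, and neither \cite{BD15} nor \cite{Bro17} argues this way.

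Second, \cref{effective nakamaye2} cannot be ``applied to $L$'' on $Z_k$: it is not a general effective Nakamaye statement for nef and big line bundles, but a base-locus containment, proved by constructing explicit sections, for the specific bundles $\ls(\mathbf{a})\boxtimes\oc_{\pb^n}(-1)$ on the universal incidence varieties $\ys_J\subset\prod_i{\rm Gr}_{k+1}(V_{\delta_i})\times\pb_J$. To exploit it one must first specialize to the Fermat-type complete intersections of \cref{sec:Fermat}, for which Brotbek's Wronskians yield a morphism $\Psi$ from (the blow-up along $\wk_\infty$ of) the relative jet tower into $\ys$, with an explicit pull-back formula for $\ls(\mathbf{b})\boxtimes\oc_{\pb^n}(-1)$; combined with the avoidance of positive-dimensional fibers (\cref{exceptional locus}, resting on the Benoist/Brotbek--Darondeau codimension bound) this gives \emph{nefness}, curve by curve, of a negatively twisted tautological bundle on $\hat{Z}_{\af,k}$, then ampleness after adding an ample class, verifying \eqref{eq:star}; finally one transfers from these special members to $c$-general hypersurfaces by the strong Zariski open property of \cref{fonctorial}, and the stated bound $d\geqslant 2cn^{c\lceil n/c\rceil+1}\lceil n/c\rceil^{c\lceil n/c\rceil+3}$ comes from optimizing $\ep_i,\delta_i,r$ in this construction (with $m_\infty(X_k,\as)=k$ from \cref{wronskian result}). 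Your proposal omits this specialization-plus-Zariski-openness mechanism entirely, and instead asks a vague ``transversality count'' to show a Wronskian locus is non-dominant for general $H_i$, which is precisely the part that the Fermat-type families, the map to $\ys$, and \cref{effective nakamaye2} are designed to replace.
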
 
Let us mention that we apply the results in the first part of the present paper to obtain the effective   lower degree bounds in \cref{main}.  

In view of the correspondence between tautological line bundles on the Demailly-Semple jet towers and invariant jet bundles studied  in \cite[Proposition 6.16]{Dem95},  the following result on \Cref{DT}  is a consequence of \cref{main}.
\begin{corx}\label{main 3}
In the same setting as \cref{main},  for any $k\geqslant \frac{n}{c}-1$, 
there exists a subbundle $\mathscr{F}\subset E_{k,m}T^*_Z$ for some $m\gg 0$ such that
\begin{thmlist}
	\item $\mathscr{F}$ is ample.
	\item For any regular  germ of curve $f:(\cb,0)\to (Z,z)$, there is  a global section  $P\in H^0(Z, \mathscr{F}\otimes \as^{-1})$ so that $P([f]_k)(0)\neq 0$.  
	\end{thmlist}
\end{corx}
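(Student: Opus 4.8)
\textbf{Proof proposal for \Cref{main 3}.}
The plan is to deduce the corollary from \Cref{main} by translating the statement \og $Z$ is almost $k$-jet ample\fg into the language of invariant jet differentials, using the comparison in \cite[Proposition 6.16]{Dem95}. First I would recall the setup: let $Z_k \to Z$ denote the Demailly-Semple $k$-jet tower of the directed manifold $(Z,T_Z)$, with tautological line bundle $\oc_{Z_k}(1)$, and with $\pi_k \colon Z_k \to Z$ the projection. Demailly's identification gives, for any weight $\bm{a}=(a_1,\ldots,a_k)\in \nb^k$ and $m$ divisible enough, an isomorphism
\[
(\pi_k)_\ast \oc_{Z_k}(m\cdot\bm{a}) \;\cong\; E_{k,m|\bm{a}|}T^*_Z
\]
onto a subbundle, up to a twist by a negative power of the pullback of an ample line bundle coming from the fibres of the tower. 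The notion of \og almost $k$-jet ample\fg from \cref{almost k ample} should unwind to: there is a weight $\bm{a}$ and an integer $p$ such that $\oc_{Z_k}(p\cdot\bm{a})\otimes\pi_k^\ast\as^{-1}$ is globally generated away from the base locus of the relevant Wronskian ideal, and this base locus is vertical, i.e. does not surject onto $Z$; equivalently its augmented base locus projects to a proper subvariety, and on a curve germ one can always avoid it. I would make this precise by quoting the exact shape of \cref{almost k ample} once it is in hand.

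Next, for part (i), I would set $\mathscr{F}$ to be the image subbundle $(\pi_k)_\ast\bigl(\oc_{Z_k}(m\cdot\bm{a})\bigr)\subset E_{k,m|\bm{a}|}T^*_Z$ for $m\gg 0$. Ampleness of $\mathscr{F}$ follows because pushing forward a line bundle that is ample modulo the fibres of $Z_k\to Z$ along the (relatively nef, relatively ample on the smooth locus) jet tower yields an ample bundle on $Z$ — here one uses that $\oc_{Z_k}(\bm{a})$ restricted to a generic fibre is ample, that the augmented base locus of $\oc_{Z_k}(\bm{a})$ is vertical by \Cref{main}, and Nakamaye-type positivity together with the standard fact that a globally generated bundle whose tautological bundle on its projectivization is big-and-nef with augmented base locus not dominating the base is ample after a symmetric twist; alternatively one invokes directly the characterization of ampleness via the almost-ampleness plus verticality of the stable base locus. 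The key point transported from \Cref{main} is precisely that the bad locus does not dominate $Z$; the bound on $d$ is inherited verbatim.

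For part (ii), given a regular germ $f\colon(\cb,0)\to(Z,z)$, its $k$-jet lifts to a point $[f]_k\in Z_k$ lying in the smooth locus of $\pi_k$ (regularity of $f$ guarantees the lift stays in the locus where the tower is a manifold and $\oc_{Z_k}(1)$ is ample on fibres). Since $[f]_k$ avoids the vertical base locus furnished by \Cref{main}, there is a section $\sigma\in H^0\bigl(Z_k,\oc_{Z_k}(m\cdot\bm{a})\otimes\pi_k^\ast\as^{-1}\bigr)$ with $\sigma([f]_k)\neq 0$; under Demailly's correspondence $\sigma$ is identified with a section $P\in H^0\bigl(Z,\mathscr{F}\otimes\as^{-1}\bigr)$, and the non-vanishing $\sigma([f]_k)\neq 0$ is exactly $P([f]_k)(0)\neq 0$ by construction of the pairing between invariant jet differentials and jets of curves. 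This is where the directed-manifold formalism of \cite{Dem95} does all the bookkeeping.

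The main obstacle I anticipate is not conceptual but one of matching conventions: the precise normalization in \cref{almost k ample} (which twist of $\oc_{Z_k}$, whether the base locus statement is for the stable base locus or the augmented base locus, and whether it is phrased on $Z_k$ or on a smooth model of it), and ensuring that the subbundle produced by $(\pi_k)_\ast$ is genuinely a \emph{subbundle} rather than merely a subsheaf — this uses that over the locus of regular jets the direct image is locally free and that ampleness is insensitive to modifying $\mathscr{F}$ outside a set the curve germ avoids. Once those conventions are pinned down, the deduction is formal and carries the degree bound $d\geqslant 2cn^{c\lceil n/c\rceil+1}\lceil n/c\rceil^{c\lceil n/c\rceil+3}$ of \Cref{main} without change.
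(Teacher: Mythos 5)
There is a genuine gap in part (i). You take $\mathscr{F}$ to be the direct image $(\pi_{0,k})_*\oc_{Z_k}(m\bm{a})$ of the tautological bundle itself, and you argue ampleness from the claim that the relevant base locus is \emph{vertical}, i.e.\ does not dominate $Z$. That claim is false: what \cref{main} gives, via \cref{almost k ample}, is ${\mathbf{B}_+}\big(\oc_{Z_k}(a_k,\ldots,a_1)\big)\subset Z_k^{\rm sing}$, and the singular-jet locus $Z_k^{\rm sing}$ is a divisor that \emph{surjects} onto $Z$ (singular $k$-jets exist over every point of $Z$). So the "standard fact" you invoke has no purchase, and the untwisted direct image $(\pi_{0,k})_*\oc_{Z_k}(\bm{a})$ has no reason to be ample --- ampleness of such direct images is essentially what the Diverio--Trapani conjecture asserts and precisely what the paper does not claim. (Relatedly, in part (ii) the lift $f_{[k]}$ of a regular germ avoids the base locus because it lies in $Z_k^{\rm reg}$, not because the base locus fails to dominate $Z$; and ampleness of a vector bundle on $Z$ cannot be arranged "outside a set the curve germ avoids".)

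The paper's proof repairs exactly this by working on the blow-up $\nu_k:\hat{Z}_k\to Z_k$ of the asymptotic Wronskian ideal: the proof of \cref{k-jet} yields the stronger property \eqref{eq:star}, i.e.\ $\nu_k^*\oc_{Z_k}(a_k,\ldots,a_1)\otimes\oc_{\hat{Z}_k}(-qF)$ is genuinely \emph{ample} on $\hat{Z}_k$. One then sets $\mathscr{F}:=(\pi_{0,k}\circ\nu_k)_*\big(\nu_k^*\oc_{Z_k}(a_k,\ldots,a_1)\otimes\oc_{\hat{Z}_k}(-qF)\big)=(\pi_{0,k})_*\big(\oc_{Z_k}(a_k,\ldots,a_1)\otimes\mathfrak{J}_q\big)$, where $\mathfrak{J}_q=(\nu_k)_*\oc_{\hat{Z}_k}(-qF)$ is an ideal sheaf cosupported on $Z_k^{\rm sing}$; this twist is what your construction omits and what makes ampleness provable. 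Local freeness follows from the local product structure of $\hat{Z}_k$; then, choosing $a_1,\ldots,a_k,q\gg 0$ so that the higher direct images vanish and $\nu_k^*\oc_{Z_k}(a_k,\ldots,a_1)\otimes\oc_{\hat{Z}_k}(-qF)\otimes\pi_k^*\as^{-1}$ is ample, the direct images of its powers are globally generated, hence quotients of direct sums of copies of $\as^m$, hence ample by the cohomological characterization of ample vector bundles. Part (ii) then follows because the twisting ideal is supported on $Z_k^{\rm sing}$ while $f_{[k]}\in Z_k^{\rm reg}$, so very ampleness upstairs provides a section of $\oc_{Z_k}(a_k,\ldots,a_1)\otimes\pi_{0,k}^*\as^{-1}\otimes\mathfrak{J}_q$ non-vanishing at $f_{[k]}$, which corresponds to the desired $P\in H^0(Z,\mathscr{F}\otimes\as^{-1})$; the inclusion $\mathscr{F}\subset E_{k,m}T_Z^*$ is then \cite[Proposition 6.16]{Dem95}, as in your sketch.
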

In other words, one  can find a subbundle $\mathscr{F}$ of the invariant jet bundle $E_{k,m}T^*_Z$, which is ample, and the \emph{Demailly-Semple locus} (see \cite[\S  2.1]{DR15} for the definition)  induced by $\mathscr{F}$   is empty.
\medskip

Lastly, let us mention that the techniques in \cite{BD15,Bro17} were extended by Brotbek and the author to prove a logarithmic analogue of the Debarre conjecture  in \cite{BD17},  and to prove the logarithmic (orbifold) Kobayashi conjecture in \cite{BD18}.  To achieve the effective  lower degree bounds, both the articles \cite{BD17,BD18}  rely on  the methods in the present paper.
 \medskip

This paper is organized as follows. In \cref{sec:jet spaces}  we recall the fundamental tools of jet differentials by Demailly, Green-Griffiths and Siu, which can be seen as higher order analogues of symmetric differential forms and provide obstructions to the existence of entire curves.    \cref{brotbek wronskian} is devoted to the  study of  new  techniques of  Wronskians introduced by Brotbek  in his proof of  the Kobayashi conjecture \cite{Bro17}.  
We bring a new perspective of Brotbek's Wronskians, which we interpret as  a certain \emph{morphism of $\oc$-modules} from   the jet bundles of   a line bundle to the invariant jet bundles.  
In view of this result one can immediately make  the first noetherianity argument in \cite{Bro17} effective. In   \cref{Nakamaye}, by means of an explicit construction of global sections with a \og negative twist\fg, we obtain a \emph{slightly weaker but effective} Nakamaye's theorem for the universal families of zero-dimensional subschemes introduced in \cite{BD15,Bro17}. This in turn renders the second noetherianity argument in \cite{Bro17}  as well as that in \cite{BD15}  effective, and in combination with the formulas for lower degree bounds in \cite{BD15,Bro17}, we prove \cref{main:Debarre,main:Kobayashi}.   The  aim of \cref{proof of main}  is to study \Cref{DT}. In \cref{sec:Fermat} we briefly recall  the essential results in \cite{Bro17}, and we show in  \cref{sec:intersection} and \cref{main proof} how to deduce \cref{main} from Brotbek's techniques.

\medskip

\noindent
\textbf{Acknowledgements.}
I would like to warmly thank my thesis supervisor Professor Jean-Pierre Demailly for his
constant encouragements and supports, and  Damian Brotbek for suggesting this problem and kindly sharing his ideas to me. I also thank Professors Steven Lu and Erwan Rousseau for their interests and suggestions on the work. I am  indebted to  Lionel Darondeau and Songyan Xie for their discussions.  Lastly,  I thank the anonymous referee for very helpful  suggestions to improve the presentation in this paper.  This work is supported by the  ERC ALKAGE project.

\section{Jet differentials and Brotbek's Wronskians}\label{technical} 
 By the work of Nadel \cite{Nad89} and Demailly-El Goul \cite{DE97},  the \emph{Wronskians} induced by meromorphic connections  provide an  abundant supply of invariant jet differentials. In \cite{Bro17} Brotbek introduced an alternative approach to construct Wronskian jet differentials associated to sections of a given line bundle. In \cref{brotbek wronskian}  we give an intrinsic definition of  Brotbek's Wronskians via the jet bundles of   line bundles.
\subsection{Jet spaces and jet differentials}\label{sec:jet spaces}
In this subsection, we collect the main techniques of jet differentials in \cite{Dem95}. A \emph{direct manifold} is a pair $(X, V)$  where $X$
is a complex manifold and $V\subset T_X$  is a holomorphic subbundle of the tangent bundle.  Denote by $p_k:J_kV\rightarrow X$   the bundle of $k$-jets of germs of parametrized curves in $(X,V)$, that is, the set of equivalent
classes of holomorphic maps $f:(\cb, 0)\rightarrow (X,x)$ which are tangent to $V$, with the equivalence relation $f\sim g$ if and only if all
derivatives $f^{(j)}(0)= g
^{(j)}(0)$ coincide for $0\leqslant j\leqslant k$, when computed in some local coordinate system
of $X$ near $x$. The class $f$ in $J_kV$ is denoted by $[f]_k$.  
The projection map  $p_k:J_kV\rightarrow X$ is simply  $[f]_k\mapsto f(0)$. When $V=T_X$, we simply write $J_kX$ in place of $J_kV$. Note that $J_kX\to X$ is  a local trivial fibration with fibers $\cb^{nk}$. Indeed, local coordinates  $(z_1,\ldots,z_n)$  for  an open set $U\subset X$ induce coordinates
\[
(z_1,\ldots,z_n,z_1',\ldots,z_n',\ldots,z_1^{(k)},\ldots,z_n^{(k)})
\]
on $p_k^{-1}(U)$, and any $k$-jet  $[f]_k\in p_k^{-1}(U)$ has coordinates    $$\big(f_1(0),\ldots,f_n(0),\ldots,f_1^{(k)}(0),\ldots,f_n^{(k)}(0)\big).$$

 Let $\gb_k$ be the group of germs of $k$-jets of biholomorphisms of $(\cb, 0)$, that is, the group of germs of biholomorphic maps
$$t \mapsto \varphi(t) = a_1t + a_2t^2+ \cdots + a_kt^k,\quad a_1\in \cb^*, a_j \in \cb, \forall j\geqslant 2,$$
in which the composition law is taken modulo terms $t^j$
of degree $j > k$.  Then $\gb_k$  admits a natural fiberwise right action on $J_kX$ defined by $\varphi\cdot [f]_k:=[f\circ \varphi]_k$.  Note that $\cb^*$ can be seen as a subgroup of $\gb_k$  defined by $(a_2=\cdots=a_k=0)$.  

In \cite{GG79}, Green-Griffiths introduced the vector bundle $E_{k,m}^{\rm GG}T_X^*\to X$ whose fibres are
complex valued polynomials $Q([f]_k)$  on the fibres of $J_kX$, of weighted
degree $m$ with respect to the $\cb^*$-action, that is,
 $Q(\lambda\cdot [f]_k) = \lambda^mQ(  [f]_k),$ 
for all $\lambda\in \cb^*$ and $[f]_k\in J_kX$. 
Let $U\subset X$ be an open set with local coordinates $(z_1,\ldots,z_n)$. Then any local section $Q\in  E_{k,m}^{\rm GG}T^*_{X}(U)$ can be written as
$$
Q=\sum_{|\alpha_1|+2|\alpha_2|+\cdots+k|\alpha_k|=m} c_\alpha(z) (d^{1}z)^{\alpha_1}(d^{2}z)^{\alpha_2}\cdots (d^kz)^{\alpha_k},
$$
where $c_\alpha(z)\in  \oc(U)$ for any $\alpha:=(\alpha_1,\ldots,\alpha_k)\in (\mathbb{N}^n)^{k}$, such that  for any holomorphic map  $\gamma:\Omega\rightarrow U$ from an open set $\Omega\subset \cb$, one has
$$
Q\big([\gamma]_k\big)(t)=\sum_{|\alpha_1|+2|\alpha_2|+\cdots+k|\alpha_k|=m} c_\alpha\big(\gamma(t)\big) \big(\gamma'(t)\big)^{\alpha_1}\big(\gamma''(t)\big)^{\alpha_2}\cdots \big(\gamma^{(k)}(t)\big)^{\alpha_k}\in  \oc(\Omega),
$$
where $[\gamma]_k(t):\Omega\rightarrow J_kX_{\upharpoonright U}$ is the  lifted holomorphic curve on $J_kX$ induced by $\gamma$.

The bundle
 $E_{k,\bullet}^{\rm GG}T_X^*:=\bigoplus_{m\geqslant 0}E_{k,m}^{\rm GG}T_X^*$   
is in a natural way  a bundle of graded algebras (the product is obtained simply by taking the product of polynomials).  There are natural inclusions $E_{k,\bullet}^{\rm GG}T_X^*\subset E_{k+1,\bullet}^{\rm GG}T_X^*$ of algebras, hence 
 $
E_{\infty,\bullet}^{\rm GG}T_X^*:=\bigcup_{k\geqslant 0} E_{k,\bullet}^{\rm GG}T_X^*
$ 
is also an  algebra.  It follows from \cite[\S 6]{Dem95} that the sheaf of holomorphic sections $\oc(E_{\infty,\bullet}^{\rm GG}T_X^*)$ admits a canonical 
derivation $D$
given by a collection of $\cb$-linear maps
\begin{align}\label{canonical derivative}
D:\oc(E_{k,m}^{\rm GG}T_X^*)\to \oc(E_{k+1,m+1}^{\rm GG}T_X^*)
\end{align}
constructed as follows.  For any germ of curve $f:(\cb,0)\to X$, and any $Q\in \oc(E_{k,m}^{\rm GG}T_X^*)$, 
 $$
(DQ)([f]_{k+1})(t):=\frac{d  }{dt}Q([f]_{k})(t).
$$ 
We can also inductively define $D^k:=D\circ D^{k-1}$.  In particular, for any holomorphic function $s\in \oc(U)$,   $
  D^k(s)\in   E_{k,k}^{\rm GG}T_X^*(U) 
  $.  

In this present paper, we are   interested in the more geometric context introduced by Demailly in \cite{Dem95}: the subbundle $E_{k,m}T_X^*\subset E_{k,m}^{\rm GG}T_X^*$ which consists of  polynomial differential
operators $Q$ which are invariant under arbitrary changes of parametrization, that is, for any $\varphi\in \gb_k$ and  any $[f]_k\in J_kX$, one has 
$$
Q\big(\varphi\cdot [f]_k\big)=\varphi'(0)^mQ([f]_k) .
$$
The bundle $E_{k,m}T_X^*$ is called the  \emph{invariant jet bundle  of order $k$ and weighted degree $m$}. It is noticeable that  \emph{Wronskians} provide a very natural construction for invariant jet differentials.

 For any direct manifold $(X,V)$ with ${\rm rank}\, V=r$, Demailly \cite{Dem95} introduced a \emph{fonctorial construction} of a sequence of direct manifolds
 \begin{align}\label{eq:sequence}
  \cdots\rightarrow (\pt_kV,V_k) \xrightarrow{\pi_k}(\pt_{k-1},V_{k-1})\xrightarrow{\pi_{k-1}}\cdots \xrightarrow{\pi_2} (\pt_1V,V_1)\xrightarrow{\pi_1}(\pt_0V,V_0)=(X,V)
 \end{align}
 so that $\pt_kV:=\pt(V_{k-1})$ is a $\pb^{r-1}$-bundle over $\pt_{k-1}V$ for each $k\geqslant 1$, and we say $\pt_kV$  the \emph{Demailly-Semple $k$-jet tower} of $(X,V)$. In the absolute case $(X,T_X)$, we simply write $X_k:=\pt_kV$. In the case of   smooth family of compact complex manifolds $\xs\to T$, $\xs^{\rm rel}_k$ denotes to be the Demailly-Semple $k$-jet tower  of the   direct manifold $(\xs,T_{\xs/T})$, where $T_{\xs/T}$ denotes  the relative tangent bundle. It follows from \cite[\S 6]{Dem95} that the Demailly-Semple jet tower has the following geometric properties.
 \begin{enumerate}[leftmargin=0.7cm]
 	\item Any germ of curve $f:(\cb,0)\to X$ tangent to $V$ can be lifted to $f_{[k]}:(\cb,0)\to \pt_kV$.
 	\item Denote by $J_k^{\rm reg}V:=\{[f]_k\mid f'(0)\neq 0 \}$ the set of \emph{regular $k$-jets} tangent to $V$. Then there exists a morphism $J_k^{\rm reg}V\to \pt_kV$, which sends $[f]_k$ to $f_{[k]}(0)$, whose image is a Zariski open subset $\pt_kV^{\rm reg}\subset \pt_kV$ which can be identified with the quotient $J_k^{\rm reg}V/\gb_k$. Moreover, the complement $\pt_kV^{\rm sing}:=\pt_kV\setminus \pt_kV^{\rm reg}$ is a divisor in $\pt_kV$.
 	\item For any $k,m\geqslant 0$ one has
 		\begin{align}\label{local isomorphism}
 	(\pi_{0,k})_*\oc_{\pt_kV}(m)= E_{k,m}V^*,
 	\end{align}
 	where we write $\pi_{j,k}=\pi_{j+1}\circ\cdots\circ \pi_k:\pt_kV\to \pt_jV$ for any $0\leqslant j\leqslant k$, and $\oc_{\pt_kV}(1)$ denotes the tautological line bundle over $\pt_kV=\pt(V_{k-1})$.
 \end{enumerate} 
More generally, for a $k$-tuple $(a_1,\ldots,a_k)\in \mathbb{N}^k$, we write
$$
\oc_{\pt_kV}(a_k,\ldots,a_1):=\oc_{\pt_kV}(a_k)\otimes \pi_{k-1,k}^*\oc_{\pt_{k-1}V}(a_{k-1})\otimes \cdots\otimes \pi_{1,k}^*\oc_{\pt_1V}(a_1).
$$
The \emph{fundamental vanishing theorem} shows that the jet differentials vanishing along any ample divisor gives rise to obstructions to the existence of entire curves.
\begin{thm}[Demailly, Green-Griffiths, Siu-Yeung]\label{fundamental}
	Let $(X,V)$ be any direct manifold equipped with an ample line bundle $\as$. For any non-constant entire curve $f:\cb\to X$ tangent to $V$, 
	and any $\omega\in H^0\big(\pt_kV, \oc_{\pt_kV}(a_k,\ldots,a_1)\otimes \pi_{0,k}^*\as^{-1}\big)$ with $(a_1,\ldots,a_k)\in \mathbb{N}^k$, one has
	$f_{[k]}(\cb)\subset (\omega=0)$.
\end{thm}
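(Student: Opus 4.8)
\emph{Proof plan.} The plan is to reduce to a one-variable growth estimate and argue by contradiction. If $(a_1,\dots,a_k)=(0,\dots,0)$ there is nothing to prove, since then $\omega\in H^0(X,\as^{-1})=0$ by ampleness of $\as$; so assume $a_j>0$ for some $j$. First I would push $\omega$ forward along $\pi_{0,k}$: by the projection formula and \eqref{local isomorphism} (more precisely \cite[\S 6]{Dem95}) this yields a global invariant jet differential $P\in H^0\big(X,E_{k,m}V^*\otimes\as^{-1}\big)$ of some weighted degree $m\geqslant 1$, and the inclusion $f_{[k]}(\cb)\subset(\omega=0)$ is equivalent to the identical vanishing on $\cb$ of the holomorphic function $g(t):=P\big([f]_k\big)(t)$, where $g\,(dt)^{\otimes m}$ is a holomorphic section over $\cb$ of the line bundle $L:=(\Omega^1_\cb)^{\otimes m}\otimes f^*\as^{-1}$. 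So I would assume $g\not\equiv 0$ and look for a contradiction; note $f$ must be non-constant, since otherwise $g(t)=P(0,\dots,0)=0$ as $P$ is homogeneous of positive weighted degree.

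Next I would set up Nevanlinna-theoretic estimates. Fix a smooth metric $h$ on $\as$ with $\Theta:=\Theta_{\as,h}>0$ (possible as $\as$ is ample), a Hermitian metric $\omega_X$ on $T_X$ with $\Theta\geqslant\omega_X$, and write $T_f(r):=\int_0^r\tfrac{dt}{t}\int_{\Delta_t}f^*\Theta$ for the characteristic of $f$ (comparable with the one relative to any fixed positive metric). Equipping $L$ with the induced metric, the Lelong--Poincar\'e formula reads $dd^c\log\|g\|^2=[\operatorname{div}g]+f^*\Theta\geqslant 0$ on $\cb$, so $\log\|g\|^2$ is subharmonic and Jensen's formula gives the lower bound $\int_0^{2\pi}\log\|g(re^{i\theta})\|\,\tfrac{d\theta}{2\pi}\geqslant\tfrac12 T_f(r)+O(1)$. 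For the matching upper bound, since $P$ is a fixed global section and sections of $E_{k,m}V^*$ scale with weight $m$ under the jetwise $\cb^*$-action, evaluating $P$ on the $k$-jet $\big(f'(t),\dots,f^{(k)}(t)\big)$ (the higher derivatives $f^{(j)}$ formed with a fixed connection on $V$) gives
\[
\|g(t)\|^2\;\leqslant\;C\!\!\sum_{|\alpha_1|+2|\alpha_2|+\dots+k|\alpha_k|=m}\;\prod_{j=1}^{k}\|f^{(j)}(t)\|^{2|\alpha_j|}_{\omega_X}.
\]
Then I would invoke the lemma on the logarithmic derivative for entire curves into a compact complex manifold, in the form $\int_0^{2\pi}\log^+\|f^{(j)}(re^{i\theta})\|_{\omega_X}\,\tfrac{d\theta}{2\pi}=O\big(\log^+T_f(r)+\log r\big)$ outside a set of finite Lebesgue measure, valid for each $j$. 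Plugging this into the upper bound and comparing with the lower bound forces $T_f(r)=O(\log^+T_f(r)+\log r)$ off a small set, hence $T_f(r)=O(\log r)$ by a standard calculus lemma.

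It remains to dispose of this residual case. A non-constant entire curve with $T_f(r)=O(\log r)$ is rational: $f$ extends to a non-constant morphism $\nu\colon\pb^1\to X$ (still tangent to $V$), which lifts to $\nu_{[k]}\colon\pb^1\to\pt_kV$ (defined off the finite ramification locus of $\nu$ and extended across it by properness of $\pi_{0,k}$). I would then check that $\nu_{[k]}^*\big(\oc_{\pt_kV}(a_k,\dots,a_1)\otimes\pi_{0,k}^*\as^{-1}\big)$ has negative degree on $\pb^1$: the factor $\nu^*\as$ has positive degree, while for each $j$ the bundle $\oc_{\pt_jV}(-1)$ restricts along $\nu_{[j]}$ to the saturation inside $\nu_{[j-1]}^*V_{j-1}$ of the image of $\nu_{[j-1]}'\colon T_{\pb^1}\to\nu_{[j-1]}^*V_{j-1}$, whose degree is at least $\deg T_{\pb^1}=2$, so $\deg\nu_{[j]}^*\oc_{\pt_jV}(1)\leqslant -2$. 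Since a line bundle of negative degree on $\pb^1$ has no non-zero global section, $\nu_{[k]}^*\omega=0$, i.e.\ $g\equiv 0$, the desired contradiction.

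The hard part will be the logarithmic derivative estimate used in the second paragraph: giving the higher derivatives $f^{(j)}$ an intrinsic meaning (the iterated $\partial/\partial t$ of local coordinates is not tensorial past first order and must be corrected by a connection) and propagating the one-variable bound $m(r,f'/f)=O(\log^+T_f(r)+\log r)$ to all orders in the compact manifold setting. I would remark that for first-order jets ($k=1$) this analytic input can be bypassed entirely: there $\|g(t)\|^2\leqslant C\|f'(t)\|^{2m}_{\omega_X}$, so $\gamma:=\|g\|^{2/m}$ is a (possibly degenerate) conformal pseudo-metric on $\cb$ satisfying $dd^c\log\gamma\geqslant\tfrac1m f^*\Theta\geqslant A\gamma$ for some constant $A>0$, and the Ahlfors--Schwarz lemma forces $\gamma\equiv 0$, hence $g\equiv 0$.
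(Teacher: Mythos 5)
The paper does not prove this statement at all: it is quoted as a classical theorem of Green--Griffiths, Siu--Yeung and Demailly (cf.\ \cite{GG79,Dem95}), so there is no internal proof to compare against. What you propose is essentially a reconstruction of the classical Nevanlinna-theoretic proof, and its architecture is sound: push $\omega$ down by \eqref{local isomorphism} and the projection formula to get $P\in H^0(X,E_{k,m}V^*\otimes\as^{-1})$, use Lelong--Poincar\'e/Jensen on the section $g=P([f]_k)$ of $f^*\as^{-1}$ to get the lower bound $\gtrsim T_f(r)$, bound $\int\log^+\|g\|$ from above by the lemma on logarithmic derivatives for jets, conclude $T_f(r)=O(\log r)$, and kill the residual rational case by a degree count on $\pb^1$. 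Two remarks. First, be aware that the entire analytic weight of the argument sits in the invoked estimate $\int_0^{2\pi}\log^+\|f^{(j)}(re^{i\theta})\|_{\omega_X}\,\tfrac{d\theta}{2\pi}=O(\log^+T_f(r)+\log r)$: for $j\geqslant 2$ this is not a formal consequence of the one-variable lemma (the naive reduction via meromorphic coordinate ratios produces uncontrolled terms $m(r,h)$), and it genuinely requires the Green--Griffiths/Siu--Yeung higher-order version for curves in a \emph{projective} manifold (projectivity is available here since $\as$ is ample); you correctly flag this, together with the covariant-versus-coordinate jet comparison, as the hard part, and as long as you cite that lemma rather than reprove it the argument is complete. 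Second, your treatment of the rational case via degrees of $\nu_{[j]}^*\oc_{\pt_jV}(1)$ is correct (the tautological line along the lift is the saturation of the image of $d\nu_{[j-1]}$, of degree $\geqslant 2$), but it can be shortened: by invariance, $P([\nu]_k)(t)\,(dt)^{\otimes m}$ glues to a holomorphic section of $(\Omega^1_{\pb^1})^{\otimes m}\otimes\nu^*\as^{-1}$, a line bundle of degree $-2m-\deg\nu^*\as<0$, hence vanishes. Finally, note that your closing remark generalizes: Demailly later gave a proof of the full statement (all $k$) using only Brody reparametrization and the Ahlfors--Schwarz lemma, bypassing Nevanlinna theory altogether, so that route is an alternative if you want to avoid the logarithmic derivative lemma as external input.
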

Observe that for any non-constant  entire curve $f:\cb \to X$ tangent to $V$, the image of its lift $f_{[k]}:\cb\to \pt_kV$  is not entirely contained in $\pt_kV^{\rm sing}$. In view of  \cref{fundamental}, we introduce the following definition.\noindent
 
\begin{dfn}\label{almost k ample}
	Let $X$ be a projective manifold. We say that $X$  \emph{is almost $k$-jet ample} if  there exists  some  $(a_1,\ldots,a_k)\in \mathbb{N}^k$ so that $\oc_{X_k}(a_k,\ldots,a_1)$ is big and its augmented base locus   ${\mathbf{B}_+}\big(\oc_{X_k}(a_k,\ldots,a_1)\big)\subset X_k^{\rm sing}.$ In particular, $X$ is Kobayashi hyperbolic. 
\end{dfn}
Note that almost 1-jet ampleness is equivalent to the ampleness of cotangent bundle.

\subsection{Brotbek's Wronskians}\label{brotbek wronskian}
This subsection is devoted to the  study of the Wronskians defined by Brotbek in \cite[\S 2.2]{Bro17}.  Let $X$ be an $n$-dimensional compact complex manifold.  Recall that for any holomorphic line bundle $L$  on $X$, one can define the bundle $J^kL$ of \emph{$k$-jet sections} of $L$ by $J^kL_x=\oc(L)_x/\big(\mc_x^{k+1}\cdot \oc(L)_x\big)$ for every $x\in X$, where $\mc_x$ is the maximal ideal of $\oc_x$.  
Pick an open set  $U\subset X$ with coordinates $(z_1,\ldots,z_n)$ so that $L_{\upharpoonright U}$ can be trivialized by a nowhere vanishing section $e_U\in L(U)$.   The fiber $ J^kL_x$ can be identified with the set of Taylor developments of order $k$
$$
\sum_{|\gamma|\leqslant k} c_\gamma (z-x)^\gamma\cdot e_U,
$$
and the coefficients $\{c_\gamma\}_{\gamma\in \mathbb{N}^n, |\gamma|\leqslant  k}$ define coordinates along the fibers of $J^kL$. This in turn gives rise to a  natural local trivialization of $J^kL$ defined by
\begin{eqnarray*}
	\Psi_U:U\times \mathbb{C}^{I_{n,k}}&\xrightarrow{\simeq}& J^kL_{\upharpoonright U},\\
	(x,c_\gamma)&\mapsto& \sum_{\gamma\in I_{n,k}} c_\gamma (z-x)^\gamma\cdot e_U,
\end{eqnarray*} 
where $I_{n,k}:=\{\gamma=(\gamma_1,\ldots,\gamma_n)\in \mathbb{N}^n\mid |\gamma|\leqslant k \}.$ 
Observe that there exists a $\cb$-linear morphism
$$
j^k_L:L\to J^kL,
$$
which is not a morphism of $\oc_X$-modules, defined as follows.  For any \(s\in L(U)\), define
\begin{align}\label{eq:jk}
j^k_{L}(s)(x):=\sum_{|\gamma|\leqslant k}\frac{1}{\gamma!}\frac{\partial^{|\gamma|}s_U}{\partial z^\gamma}(x)(z-x)^\gamma\cdot e_U,
\end{align}
where   $s_U\in \oc(U)$  so that $s=s_U\cdot e_U$.  
When $L=\oc_X$, we simply write $j^k:=j^k_{\oc_X}$. 
The jet bundle $J^kL$ will be used to interpret the canonical derivative $D:\oc(E_{k,m}^{\rm GG}T_X^*)\to \oc(E_{k+1,m+1}^{\rm GG}T_X^*)$  defined in \eqref{canonical derivative} in an alternative way.  
Let us first  give a more precise expression of $D$. 
\begin{lem}\label{d jet}
	 Take any open set $U\subset X$ with coordinates $(z_1,\ldots,z_n)$.  For any $k\geqslant1$, and any holomorphic function $s\in \oc(U)$, one has
\begin{align}\label{expression}
D^k(s)(z)=\sum_{|\alpha_1|+2|\alpha_2|+\cdots+k|\alpha_k|=k} c_{k,\alpha}(z) (d^{1}z)^{\alpha_1}(d^{2}z)^{\alpha_2}\cdots (d^kz)^{\alpha_k}\in E_{k,k}^{\rm GG}T_X^*(U)
\end{align}
such that for each $\alpha:=(\alpha_1,\ldots,\alpha_k)\in (\mathbb{N}^n)^{k}$, $c_{k,\alpha}(z)\in \oc(U)$ is a $\zbb$-linear combination of $\frac{\d^{|\gamma|} s}{\d z^{\gamma}}(z)$ with $|\gamma|=\gamma_1+\cdots+\gamma_n\leqslant k$. 
\end{lem}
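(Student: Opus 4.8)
The plan is to proceed by induction on $k$, exploiting the $\cb$-linearity of the operator $D$ together with an explicit Leibniz-type formula for the action of $D$ on a single monomial.

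First I would record how $D$ acts on a monomial. Since $D\colon\oc(E_{k,m}^{\rm GG}T_X^*)\to \oc(E_{k+1,m+1}^{\rm GG}T_X^*)$ is $\cb$-linear, it suffices to compute $D$ on an element of the form $Q=c(z)\,(d^1z)^{\alpha_1}(d^2z)^{\alpha_2}\cdots(d^kz)^{\alpha_k}$ with $c\in\oc(U)$ and $\alpha=(\alpha_1,\ldots,\alpha_k)\in(\nb^n)^k$. For a germ of curve $f\colon(\cb,0)\to U$ one has $Q([f]_k)(t)=c(f(t))\prod_{j=1}^{k}\big(f^{(j)}(t)\big)^{\alpha_j}$, so differentiating in $t$ by the chain rule and the Leibniz rule and reading off the result in the coordinates $d^jz$ yields, with the convention $\alpha_{k+1}:=0$,
\begin{align*}
D(Q)={}&\sum_{i=1}^{n}\frac{\partial c}{\partial z_i}\,(d^1z)^{\alpha_1+e_i}(d^2z)^{\alpha_2}\cdots(d^kz)^{\alpha_k}\\
&{}+c\sum_{j=1}^{k}\sum_{i=1}^{n}\alpha_{j,i}\,(d^1z)^{\alpha_1}\cdots(d^jz)^{\alpha_j-e_i}(d^{j+1}z)^{\alpha_{j+1}+e_i}\cdots(d^{k+1}z)^{\alpha_{k+1}},
\end{align*}
where $e_i\in\nb^n$ is the $i$-th standard vector, $\alpha_{j,i}$ is the $i$-th component of $\alpha_j$, and monomials in which some exponent becomes negative (i.e.\ when $\alpha_{j,i}=0$) are understood to be absent. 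A direct check shows that every monomial on the right-hand side has weighted degree exactly one more than $Q$: in the first sum $|\alpha_1|$ increases by $1$, while in the second sum moving one unit from level $j$ to level $j+1$ changes the weighted degree by $-j+(j+1)=1$.

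Next I would run the induction. For $k=0$ the assertion is the tautology $D^0(s)=s$, the sum reducing to the single term indexed by the zero multi-index, with coefficient $s=\frac{\partial^0 s}{\partial z^0}$ (and for $k=1$ one gets directly $D(s)=\sum_i\frac{\partial s}{\partial z_i}\,d^1z_i$). Assume the statement for some $k\geqslant 0$, so that $D^k(s)=\sum_{\alpha}c_{k,\alpha}(z)(d^1z)^{\alpha_1}\cdots(d^kz)^{\alpha_k}$, the sum running over $\alpha$ with $|\alpha_1|+2|\alpha_2|+\cdots+k|\alpha_k|=k$ and each $c_{k,\alpha}$ a $\zbb$-linear combination of the partials $\frac{\partial^{|\gamma|}s}{\partial z^\gamma}$ with $|\gamma|\leqslant k$. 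Applying the monomial formula above to each term and using the $\cb$-linearity of $D$, the coefficients that arise are of two kinds: either $\frac{\partial c_{k,\alpha}}{\partial z_i}$, which is again a $\zbb$-linear combination of partials of $s$ of order $\leqslant k+1$; or an integer multiple $\alpha_{j,i}\,c_{k,\alpha}$ of an old coefficient, which remains a $\zbb$-linear combination of partials of $s$ of order $\leqslant k\leqslant k+1$. By the weighted-degree computation, every resulting monomial has weighted degree $k+1$. Finally, collecting the finitely many monomials that share a common multi-index $\alpha'$ — a sum of $\zbb$-linear combinations being again a $\zbb$-linear combination — produces the desired expression for $D^{k+1}(s)$, which lies in $E_{k+1,k+1}^{\rm GG}T_X^*(U)$ as already recorded above.

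I do not anticipate a genuine obstacle here: the only slightly delicate point is the multi-index bookkeeping in the Leibniz-type formula for $D$ on a monomial, together with the elementary observation that the two operations producing new coefficients — taking one partial derivative, or multiplying by a nonnegative integer — both preserve the property of being a $\zbb$-linear combination of partials of $s$ of controlled order. Everything else is a routine induction.
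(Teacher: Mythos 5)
Your proposal is correct and follows essentially the same route as the paper: induction on $k$, with the inductive step carried out by computing $D$ on a single monomial via the chain/Leibniz rule and observing that the new coefficients are either partial derivatives of the old ones or integer multiples of them, hence still $\zbb$-linear combinations of $\frac{\partial^{|\gamma|}s}{\partial z^{\gamma}}$ with $|\gamma|\leqslant k+1$. If anything, your monomial formula is slightly more precise than the paper's displayed expression, since you record the integer multiplicities $\alpha_{j,i}$ produced by the Leibniz rule explicitly.
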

\begin{proof}
We will prove the lemma by induction on $k$. For $k=1$, we simply have
$$
D(s)=ds=\sum_{i=1}^{n}\frac{\d s}{\d z_i}(z)d z_i\in T^*_X(U),
$$
and thus \eqref{expression} remains valid  for $k=1$.

Now we assume that  $D^k(s)$ has the form  \eqref{expression}. By   \eqref{canonical derivative}, one has
\begin{align*}
&D^{k+1}(s)=\\
&\sum_{|\alpha_1|+2|\alpha_2|+\cdots+k|\alpha_k|=k}\Bigg( \sum_{i=1}^{k-1}\sum_{\substack{j=1,\ldots n\\\alpha_i-\ebf_j\in \mathbb{N}^n}}c_{k,\alpha}(z) (d^{1}z)^{\alpha_1}\cdots(d^{i}z)^{\alpha_i-\ebf_j}(d^{i+1}z)^{\alpha_{i+1}+\ebf_j}\cdots (d^kz)^{\alpha_k}\\
\!\!\!\!\!\!\!\!\!\!& + \sum_{j=1}^{n}\frac{\d c_{k,\alpha}(z)}{\d z_j} (d^{1}z)^{\alpha_1+\ebf_j}\cdots (d^kz)^{\alpha_k} + \sum_{\substack{j=1,\ldots n\\\alpha_k-\ebf_j\in \mathbb{N}^n}}c_{k,\alpha}(z) (d^{1}z)^{\alpha_1}\cdots (d^kz)^{\alpha_k-\ebf_j}(d^{k+1}z)^{\ebf_j}\Bigg)
\end{align*}
where $\ebf_j$   denotes the vector in $\mathbb{N}^n$ with a $1$ in the   $j$th coordinate and 0's elsewhere. By the assumption,  for every $j=1,\ldots,n$ and every $\alpha$,   $\frac{\d c_{k,\alpha}(z)}{\d z_j}\in \oc(U)$  is a $\zbb$-linear combination of $\frac{\d^{|\gamma|} s}{\d z^{\gamma}}(z)$ with $|\gamma|=\gamma_1+\cdots+\gamma_n\leqslant k+1$.  From the above expression  we conclude that \eqref{expression} also holds true for $D^{k+1}(s)$.  The lemma follows.
\end{proof}

It  follows from \eqref{eq:jk}  and  \cref{d jet}  that  there exists a morphism   of   $\oc_X$-modules, denoted by $j^kD:J^k\oc_X\to E_{k,k}^{\rm GG}T_X^*$, so that $D^k:\oc_X\to E_{k,k}^{\rm GG}T_X^*$ factors through $j^kD $, that is, $D^k=j^kD \circ j^k$.

 Following \cite{Bro17}, given $k+1$ holomorphic functions $g_0,\ldots,g_k\in \oc(U)$, one can associate them to a    jet differentials of order $k$ and weighted degree $k':=\frac{k(k+1)}{2}$, say \emph{Wronskians}, in the following way
\begin{align}\label{def:Wronskians}
W_U(g_0,\ldots,g_k):=\begin{vmatrix}
g_0 & g_1 & \ldots & g_k \\
D(g_0) & D(g_1) & \cdots & D(g_k) \\
\vdots &\vdots & \ddots & \vdots \\
D^k(g_0) & D^k(g_1) & \cdots & D^k(g_k) 
\end{vmatrix} \in E^{\rm GG}_{k,k'}T_X^*(U).
\end{align}
It follows from \cite[Proposition 2.2]{Bro17} that   Wronskians are indeed \emph{invariant jet differentials}. 
From its alternating property, $W_U$  induces a $\cb$-linear map, which we still denoted by  $W_U:\Lambda^{k+1}\oc(U)\to E_{k,k'}T_X^*(U)$ abusively. By the factorization property of $D^k$, $W_U$ gives rise to  a morphism of $\oc_U$-module
$$
W_{J^k\oc_U}:\Lambda^{k+1}J^k\oc_U \to E_{k,k'}T_U^*
$$
so that one has
$$
W_U(g_0,\ldots,g_k)=W_{J^k\oc_U}\big(j^k(g_0)\wedge\cdots\wedge j^k(g_k)\big).
$$ 
In other words,  Brotbek's Wronskians $W_U$ can be factorized as follows.
\begin{align}\label{eq:factorization}
W_U:\Lambda^{k+1}\oc(U)\xrightarrow{\Lambda^{k+1}j^k}\Lambda^{k+1}  \big(J^k\oc_U(U)\big)\rightarrow  \big(\Lambda^{k+1}  J^k\oc_U\big)(U)\xrightarrow{W_{J^k{\oc_U}}}  E_{k,k'}T_U^*(U).
\end{align}

 Now we consider the Demailly-Semple $k$-jet tower $X_k$ of $(X,T_X)$. For the open set $U_k:=\pi^{-1}_{0,k}(U)$ of $X_k$,   the coordinate system $(z_1,\ldots,z_n)$ on $U$  induces a trivialization  $
U_k\simeq U\times \rc_{n,k},
$ 
where $\rc_{n,k}$ is some smooth  rational variety introduced in \cite[Theorem 9.1]{Dem95}. Hence
\begin{eqnarray}\label{local-pull-back}
\oc_{X_k}(1)_{\upharpoonright U_k}\simeq {\rm pr}_2^*(\oc_{\rc_{n,k}}(1)),
\end{eqnarray}
where ${\rm pr}_2:U_k\xrightarrow{\simeq} U\times \rc_{n,k}\rightarrow \rc_{n,k}$ is the composition of the isomorphism with the projection map. By   \eqref{local isomorphism}, 
we conclude that, under the above trivialization, the direct image $(\pi_{0,k})_*$ induces a local trivialization of the vector bundle $E_{k,k'}T_U^*$ 
\begin{eqnarray}\label{jet trivial}
\varphi_U: U\times H^0\big(\rc_{n,k},\oc_{\rc_{n,k}}(k')\big)\xrightarrow{\simeq} E_{k,k'}T_U^*.
\end{eqnarray}
 Write $F_{n,k}:=H^0\big(\rc_{n,k},\oc_{\rc_{n,k}}(k')\big)$.   
Therefore, under the trivializations $\varphi_U$ and $\Psi_U$, the morphism of $\oc_U$-module $W_{J^k{\oc_U}}$ is indeed  \emph{constant}, \emph{i.e.} there is a $\cb$-linear map 
${\nu}_{n,k}:\Lambda^{k+1}\cb^{I_{n,k}}\rightarrow F_{n,k} $ such that one has the following diagram.
\begin{displaymath}
\xymatrix{ U\times \Lambda^{k+1}\cb^{I_{n,k}} \ar[d]_{\Psi_U}^{\rotatebox{270}{$\simeq$}} \ar[r]^-{\mathds{1}_U\times {\nu}_{n,k}} & U\times {F_{n,k} } \ar[d]^-{\varphi_U}_{\rotatebox{90}{$\simeq$}}     \\
	\Lambda^{k+1} J^k\oc_U \ar[r]^{W_{J^k{\oc_U}}}& E_{k,k'}T_U^*   
}
\end{displaymath}
Denote by $\mathfrak{I}_{n,k}\subset \oc_{\rc_{n,k}}$ the base ideal of the linear system $ 
|{\rm Im}(\nu _{n,k})| \subset |\oc_{\rc_{n,k}}(k')|
$, and set  $\wk_{k,U}$ to be the ideal sheaf ${\rm pr}_2^*(\mathfrak{I}_{n,k})$ on $U_k$.

By \cite{Bro17}, Wronskians can also be associated to global sections of any line bundle $L$. Take  an open set  $U\subset X$ with coordinates $(z_1,\ldots,z_n)$ so that $L_{\upharpoonright U}$ can be trivialized by a nowhere vanishing section $e_U\in L(U)$.  Consider  any $s_0,\ldots,s_k\in H^0(X,L)$.  There exists  unique $s_{i,U}\in \oc(U)$ so that  $s_i=s_{i,U}\cdot e_U$ for every $i=0,\ldots,k$. It was proved in \cite[Proposition 2.3]{Bro17} that the section 
 \begin{align}\label{eq:local wronskian}
  W_{U}(s_{0,U},\ldots, s_{k,U})\cdot e_U^{k+1}\in   (E_{k,k'}T_X^*\otimes L^{k+1})(U).
 \end{align}
is \emph{intrinsically} defined, \emph{i.e.} it does not depend on the choice of $e_U$. Hence they can be glued together into a global section, denoted to be
 $
  W(s_0,\ldots,s_k)\in H^0(X,E_{k,k'}T_X^*\otimes L^{k+1}) 
  $. 
  Set
\begin{align}\label{inverse wronskian}
\omega(s_0,\ldots,s_k):=(\pi_{0,k})_*^{-1}W(s_0,\ldots,s_k)\in H^0\big(X_k,\oc_{X_k}(k')\otimes \pi_{0,k}^*L^{k+1}\big)
\end{align} to be the inverse image of the Wronskian $W(s_0,\ldots,s_k)$ under \eqref{local isomorphism}. 

Following \cite[\S 2.3]{Bro17},   define
\begin{align*}
\mathbb{W}(X_k,L):&={\rm Span}\{\omega(s_0,\ldots,s_n)\mid s_0,\ldots,s_n\in H^0(X,L) \}\\
&\subset H^0\big(X_k,\oc_{X_k}(k')\otimes \pi_{0,k}^* L^{k+1} \big)
\end{align*}
and  define the \emph{$k$-th Wronskian ideal sheaf} of $L$, denoted by $\mathfrak{w}(X_k,L)$, to be the base ideal  of  $\mathbb{W}(X_k,L)$. It was also shown in  \cite[\S 2.3]{Bro17} that
 if $L$ is very ample, one has  a chain of inclusions
$$
\mathfrak{w}(X_k,L)\subset \mathfrak{w}(X_k,L^2)\subset \cdots \subset \mathfrak{w}(X_k,L^m)\subset\cdots.
$$
By   noetherianity, this   increasing sequence stabilizes after some $m_{\infty}(X_k,L)\in \mathbb{N}$, and   the obtained asymptotic ideal sheaf is denoted by $
\mathfrak{w}_{\infty}(X_k,L).
$
Let us mention that $m_{\infty}(X_k,L)$ concerns   the first noetherianity argument in \cite{Bro17}, and in the rest of this subsection we will apply our new interpretation of Brotbek's Wronskians in \eqref{eq:factorization} to render  $m_{\infty}(X_k,L)$  effective.  The strategy is to compare   the globally defined  Wronskian ideal sheaves  $\{\mathfrak{w}(X_k,L^m)\}_{m\in \mathbb{N}}$ to the intrinsic ideal sheaf $\wk_{k,U}$.

One direction is easy to see from the very definition of $\mathfrak{w}(X_k,L)$.  By \eqref{eq:local wronskian}, for any $s_0,\ldots,s_k\in H^0(X,L)$, the Wronskian can be localized by
 \begin{align*} 
W(s_0,\ldots,s_k)_{\upharpoonright U}=W_{U}(s_{0,U},\ldots, s_{k,U})\cdot e_U^{k+1}\in   (E_{k,k'}T_X^*\otimes L^{k+1})(U).
\end{align*}
We denote by $\omega_U(s_{0,U},\ldots, s_{k,U})\in \oc_{X_k}(k')(U_k)$ the corresponding element of $W_{U}(s_{0,U},\ldots, s_{k,U})$ under the isomorphism \eqref{local isomorphism},  where $U_k:=\pi_{0,k}^{-1}(U)$.  In view of \eqref{local-pull-back}, one has $
\oc_{X_k}(k')(U_k)\simeq H^0(U, U\times F_{n,k} )
$, or more precisely,  
\begin{displaymath}
\xymatrix{  H^0(U, \Lambda^{k+1} J^k\oc_U)       \ar[d]^-{\Psi_U^{-1}}_{\rotatebox{90}{$\simeq$}} \ar[r]^-{W_{J^k{\oc_U}}} &  \ar[d]^-{\varphi_U^{-1}}_{\rotatebox{90}{$\simeq$}} H^0(U,E_{k,k'}T_U^*) \\
  H^0(U,U\times \Lambda^{k+1}\cb^{I_{n,k}}) \ar[r]^-{\mathds{1}_U\times \nu_{n,k} } & H^0(U, U\times F_{n,k} ). 
}
\end{displaymath}
By \eqref{eq:factorization},  $W_{U}(s_{0,U},\ldots, s_{k,U})=W_{J^k{\oc_U}}(j^ks_{0,U}\wedge \cdots \wedge j^ks_{k,U})$. Hence 
\begin{align}\label{eq:inverse iso}
\omega_U(s_{0,U},\ldots, s_{k,U})\simeq (\mathds{1}_U\times \nu_{n,k}) \circ \Psi_U^{-1}(j^ks_{0,U}\wedge \cdots \wedge j^ks_{k,U}).
\end{align}
Recall that  $\mathfrak{I}_{n,k} \subset \oc_{\rc_{n,k}}$ 
 is the base ideal of the linear system $|{\rm Im}(\nu_{n,k} )|$, and $\wk_{k,U}$ is defined to be the ideal sheaf ${\rm pr}_2^*(\mathfrak{I}_{n,k})$ on $U_k\simeq U\times \rc_{n,k}$. By \eqref{eq:inverse iso},  the base ideal of   $\omega_U(s_{0,U},\ldots, s_{k,U})$ is contained in $\wk_{k,U}$. As $s_0,\ldots,s_k\in H^0(X,L)$ are arbitrary, this leads to 
\begin{align}\label{inclusion}
 \wk(X_k,L)_{\upharpoonright U_k}\subset \wk_{k,U}.
\end{align}

Now we further assume that  the  line bundle $L$ separates $k$-jets everywhere, \emph{i.e.}  the $\cb$-linear map 
$$
H^0(X,L)\xrightarrow{j^k_L}  H^0(X,J^kL)\to J^kL_{x}
$$
is surjective for any $x\in X$. Then  
$$
\Lambda^{k+1}H^0(X,L)\to \Lambda^{k+1}\oc(U)\xrightarrow{j^k} \Lambda^{k+1}J^k{\oc_U}(U)\to \Lambda^{k+1}J^k\oc_{x}\simeq \Lambda^{k+1}\cb^{I_{n,k}}
$$
is also surjective for any $x\in U$.  By \eqref{eq:inverse iso} again,  $$\Im(\nu_{n,k})={\rm Span}\{\omega_U(s_{0,U},\ldots, s_{k,U})_{\upharpoonright \{x\}\times \rc_{n,k}} \mid s_0,\ldots,s_k\in H^0(X,L)\},$$
where we identify $\{x\}\times \rc_{n,k}$ with the fiber $\pi_{0,k}^{-1}(x)$. 
Write $\iota_x$ to be the composition $\rc_{n,k}\to \{x\}\times \rc_{n,k} \hookrightarrow  U\times \rc_{n,k}\xrightarrow{\simeq }U_k\hookrightarrow X_k$. This in turn implies that 
$$\iota_x^*\wk(X_k,L):=\iota_x^{-1}\wk(X_k,L)\otimes _{\iota_x^{-1}\oc_{X_k}}\oc_{\rc_{n,k}}=\mathfrak{I}_{n,k}$$
It follows from $\wk_{k,U}:={\rm pr}_2^*\mathfrak{I}_{n,k}$ that 
$ \wk(X_k,L)_{\upharpoonright U_k}= \wk_{k,U}$.  
By the inclusive relation \eqref{inclusion}, one has 
 \begin{align}\label{local}
 \wk_{k,U}=\wk(X_k,L)_{\upharpoonright U_k}=\wk(X_k,L^2)_{\upharpoonright U_k}=\cdots=\wk(X_k,L^k)_{\upharpoonright U_k}=\cdots.
 \end{align} 
As is well-known, $A^k$ separates $k$-jets everywhere once $A$ is very ample. By \eqref{local}, we conclude the following result.
\begin{thm}\label{wronskian result} 
	Let $X$ be a projective manifold, and let  $A$ be a very ample line bundle on $X$. Then $\wk(X_k,A^k)=\wk_{\infty}(X_k,A)$ and $m_{\infty}(X_k,A)=k$. 
\end{thm}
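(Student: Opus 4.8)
The plan is to upgrade the local identity \eqref{local} into a global statement on $X_k$ and then combine it with the monotone chain of Wronskian ideal sheaves recalled earlier in this subsection. First, since $A$ is very ample, the line bundle $A^k$ separates $k$-jets everywhere, so \eqref{local} applies with $L=A^k$: for every coordinate chart $U\subset X$ one gets
\[
\wk_{k,U}=\wk(X_k,A^k)_{\upharpoonright U_k}=\wk(X_k,A^{2k})_{\upharpoonright U_k}=\cdots=\wk(X_k,A^{jk})_{\upharpoonright U_k}=\cdots ,
\]
where $U_k=\pi_{0,k}^{-1}(U)$. In particular, for every $j\geqslant 1$ the ideal sheaves $\wk(X_k,A^{jk})$ and $\wk(X_k,A^{k})$ have the same restriction to $U_k$.

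As $U$ runs over a coordinate atlas of $X$ the open sets $U_k$ cover $X_k$, and an ideal sheaf is determined by its restrictions to the members of an open cover; hence the previous step yields $\wk(X_k,A^{jk})=\wk(X_k,A^{k})$ globally on $X_k$ for every $j\geqslant 1$. Now I would interpolate using the inclusions $\wk(X_k,A)\subset\wk(X_k,A^2)\subset\cdots$: for any $m\geqslant k$ one has $m\leqslant mk$ (as $k\geqslant 1$), so that
\[
\wk(X_k,A^{k})\subseteq\wk(X_k,A^{m})\subseteq\wk(X_k,A^{mk})=\wk(X_k,A^{k}),
\]
forcing $\wk(X_k,A^{m})=\wk(X_k,A^{k})$. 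Thus the increasing sequence $\{\wk(X_k,A^{m})\}_{m\geqslant 1}$ is already constant for $m\geqslant k$, which is exactly the assertion that $\wk_{\infty}(X_k,A)=\wk(X_k,A^{k})$ and that one may take $m_{\infty}(X_k,A)=k$.

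I do not expect any serious obstacle: the substantive content has already been absorbed into \eqref{local}, i.e.\ into the identification---through the intrinsic factorization \eqref{eq:factorization} of Brotbek's Wronskians together with the surjectivity onto $k$-jets provided by the separation hypothesis---of the globally defined ideals $\wk(X_k,L^{m})$ with the model ideal $\wk_{k,U}={\rm pr}_2^*\mathfrak{I}_{n,k}$ whenever $L$ separates $k$-jets. The only points still requiring (routine) care are recalling that $A^{k}$ separates $k$-jets whenever $A$ is very ample, so that \eqref{local} may legitimately be invoked with $L=A^{k}$, and observing that equality of ideal sheaves on $X_k$ can be checked on the cover $\{U_k\}$.
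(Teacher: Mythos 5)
Your proof is correct and takes essentially the same route as the paper: Theorem \ref{wronskian result} is deduced from \eqref{local} applied with $L=A^k$ (which separates $k$-jets because $A$ is very ample), combined with the monotone chain $\wk(X_k,A)\subset\wk(X_k,A^2)\subset\cdots$. Your globalization over the cover $\{U_k\}$ and the sandwich $\wk(X_k,A^{k})\subseteq\wk(X_k,A^{m})\subseteq\wk(X_k,A^{mk})=\wk(X_k,A^{k})$ for $m\geqslant k$ just make explicit the bookkeeping that the paper compresses into its one-line conclusion after \eqref{local}.
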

Moreover, it follows from the  relation \eqref{local}  that  the asymptotic Wronskian ideal sheaf is intrinsically defined, \emph{i.e.} $\wk_{\infty}(X_k,L)$ does not depend on  the very ample line bundle $L$. This reproves   \cite[Lemma 2.8]{Bro17}.  It also  allows us to denote by  $\wk_{\infty}(X_k)$ the asymptotic Wronskian ideal sheaf.

\begin{rem}
	In a joint work with Brotbek \cite{BD18}, we  generalize  the alternative interpretation of Wronskians by jets of sections of line bundles in this subsection to the logarithmic settings.
\end{rem}

\subsection{Blow-up  of the Wronskian ideal sheaf}\label{blow-up}
This subsection is mainly borrowed from \cite{Bro17}. We will state some important results without proof, and the readers who are interested in further details are encouraged to refer to \cite[\S 2.4]{Bro17}. Let us first recall the following crucial property of the Wronskian ideal sheaf in \cite{Bro17}.
\begin{lem}[\!\!\protect{\cite[Lemma 2.4]{Bro17}}]
Let $X$ be a  projective manifold equipped with a very ample line bundle $L$.  Then 
	$$
	{\rm Supp}\big(\oc_{X_k}/\mathfrak{w}(X_k,L^k)\big)\subset X_k^{\rm sing},
	$$
	where $ X_k^{\rm sing}$ is the set of singular $k$-jets of $X_k$.
\end{lem}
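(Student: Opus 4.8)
The plan is to show that away from the singular jet locus $X_k^{\rm sing}$, the Wronskian ideal sheaf $\mathfrak{w}(X_k,L^k)$ is the whole structure sheaf $\oc_{X_k}$, i.e.\ that the linear system $\mathbb{W}(X_k,L^k)$ has no base points on the regular locus $X_k^{\rm reg}$. Since $\mathfrak{w}(X_k,L^k)$ is by definition the base ideal of $\mathbb{W}(X_k,L^k)$, saying that $\oc_{X_k}/\mathfrak{w}(X_k,L^k)$ is supported in $X_k^{\rm sing}$ is exactly the statement that for every point $w\in X_k^{\rm reg}$ there exist $s_0,\ldots,s_n\in H^0(X,L^k)$ with $\omega(s_0,\ldots,s_n)(w)\neq 0$.

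First I would fix such a $w\in X_k^{\rm reg}$ and set $x:=\pi_{0,k}(w)\in X$. By property (2) of the Demailly--Semple tower recalled in \cref{sec:jet spaces}, since $w$ lies in the regular locus $X_k^{\rm reg}\simeq J_k^{\rm reg}V/\gb_k$, we may choose a regular jet $[f]_k\in J_k^{\rm reg}V$, with $f'(0)\neq 0$, whose class in $X_k$ is $w$. Working in a coordinate chart $U$ around $x$ with $L^k$ trivialized by $e_U$, the value $\omega(s_0,\ldots,s_n)(w)$ is (up to the nonzero twist by $e_U^{n+1}$ and the isomorphism \eqref{local isomorphism}) governed by the Wronskian determinant $W_U(s_{0,U},\ldots,s_{n,U})([f]_k)$. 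So it suffices to produce sections whose local Wronskian, evaluated on the fixed regular jet $[f]_k$, is nonzero.

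Next, the key input is that $L^k$ separates $k$-jets everywhere (as $L$ is very ample, as noted before \cref{wronskian result}), so the map $H^0(X,L^k)\to J^kL^k_x$ is surjective; concretely, in the chart $U$ we can find $s_0,\ldots,s_n\in H^0(X,L^k)$ whose local representatives $s_{i,U}\in \oc(U)$ have prescribed Taylor expansions of order $k$ at $x$. Now, after composing with $f$, each column of $W_U(s_{0,U},\ldots,s_{n,U})([f]_k)$ is $\bigl(g_i(0),g_i'(0),\ldots,g_i^{(k)}(0)\bigr)$ where $g_i:=s_{i,U}\circ f$. Since $f'(0)\neq 0$ we may (after an analytic change of the source coordinate, or directly) arrange, using the freedom in choosing the $k$-jets of the $s_{i,U}$ at $x$, that the $g_i$ have Taylor expansions $g_i(t)=t^{\sigma(i)}+O(t^{k+1})$ for a suitable choice of distinct exponents $\sigma(0),\ldots,\sigma(n)\in\{0,1,\ldots,k\}$ — this is possible because $k\geqslant n$ would be needed to have $n+1$ distinct exponents in $\{0,\ldots,k\}$; when $k<n$ one instead only needs a regular jet, and here the statement is genuinely about $W_U$ on $J_kV$ for the absolute case, but the cited \cite[Proposition 2.3]{Bro17} and Brotbek's argument handle exactly this by choosing the exponents to be $0,1,\ldots,k$ padded appropriately, or by reducing to the case where only $k+1$ of the sections matter. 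With such a choice the Wronskian determinant becomes, modulo higher-order terms that vanish, a nonzero constant multiple of a Vandermonde-type determinant $\prod_{i<j}(\sigma(j)-\sigma(i))\neq 0$, hence $W_U(s_{0,U},\ldots,s_{n,U})([f]_k)\neq 0$.

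**The main obstacle** I expect is the bookkeeping in the last step: one must check that the determinant computed from the truncated Taylor data really does pick out the leading Vandermonde term and that the higher-order corrections (coming from $D^j$ mixing in lower derivatives, cf.\ \cref{d jet}) do not conspire to cancel it, and one must confirm that the number of available distinct exponents is compatible with having $n+1$ sections — i.e.\ that the regularity hypothesis $f'(0)\neq 0$ together with $k$-jet separation genuinely suffices. This is precisely the content of \cite[Lemma 2.4]{Bro17}, so in the write-up I would either reproduce Brotbek's short argument along the lines above or simply cite it; since the excerpt already attributes the statement to \cite[Lemma 2.4]{Bro17}, the cleanest course here is to recall that the proof reduces to evaluating $W_U$ on a regular jet and invoking $k$-jet separation to realize a nonvanishing Vandermonde, and refer to \cite[\S 2.2, \S 2.4]{Bro17} for the details.
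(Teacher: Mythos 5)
The paper does not actually prove this statement: the subsection it sits in is explicitly ``mainly borrowed from \cite{Bro17}'' and the lemma is quoted from \cite[Lemma 2.4]{Bro17} without proof, so there is no internal argument to compare against. Your outline does follow Brotbek's actual proof: restrict to a point $w\in X_k^{\rm reg}$, represent it by a regular jet $[f]_k$ with $f'(0)\neq 0$, use that $L^k$ separates $k$-jets to prescribe the $k$-jets of the local representatives $s_{i,U}$ at $x$, and observe that the resulting Wronskian of the $g_i=s_{i,U}\circ f$ is a nonvanishing determinant; invariance under $\gb_k$ makes the nonvanishing independent of the chosen parametrization, and via \eqref{local isomorphism} this is exactly nonvanishing of $\omega(s_0,\ldots,s_k)$ at $w$.

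The one place where you go astray is the worry about needing $n+1$ distinct exponents in $\{0,\ldots,k\}$ and the ensuing hedge about the case $k<n$. This is an artifact of a typo in the paper's definition of $\mathbb{W}(X_k,L)$: the Wronskian \eqref{def:Wronskians} takes exactly $k+1$ arguments (note the twist $L^{k+1}$ in the target), so the spanning sections are $s_0,\ldots,s_k$, not $s_0,\ldots,s_n$, and no mismatch between $n$ and $k$ ever occurs. With that fixed, the argument closes cleanly and no ``padding'' or reduction is needed: since $f'(0)\neq 0$ one may choose coordinates in which $f(t)=(t,0,\ldots,0)$, then pick $s_0,\ldots,s_k\in H^0(X,L^k)$ whose $k$-jets at $x$ are $z_1^0,z_1^1,\ldots,z_1^k$ (possible by $k$-jet separation), so that $g_i(t)=t^i+O(t^{k+1})$ and the matrix $\big(g_i^{(j)}(0)\big)_{0\leqslant i,j\leqslant k}$ is exactly diagonal with entries $i!$; the determinant is $\prod_{i=0}^{k}i!\neq 0$ and your concern about higher-order corrections cancelling the leading term is vacuous, since those corrections contribute nothing to derivatives of order at most $k$ at $t=0$. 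So the proposal is correct in substance, but the last step should be stated with $k+1$ sections and the explicit diagonal computation rather than deferred back to the citation.
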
  
Based on the above lemma, as was shown in  \cite{Bro17}, Brotbek introduced a \emph{fonctorial birational morphism} of the Demailly-Semple $k$-jet  tower $\nu_k:\hat{X}_k\to X_k$ by blowing-up the asymptotic Wronskian ideal sheaf $\wk_{\infty}(X_k)$, so that he was able to establish a \emph{strong Zariski open property}  for hyperbolicity. Indeed,  Brotbek even built the  {strong Zariski open property}  for  almost  $k$-jet ampleness. We require the following results in \cite{Bro17} to proceed further.

\begin{thm}[\!\!\protect{\cite[Propositions 2.10, 2.11 and 2.13]{Bro17}}]\label{fonctorial}
	Let $X$ be a  projective manifold. 
	\begin{thmlist} 
	\item For any smooth closed submanifold $Y\subset X$, the inclusion $Y_k\subset X_k$ induces an inclusion
 $
\hat{Y}_k\subset \hat{X}_k
$. Moreover, $\hat{Y}_k$ is the strict transform of $Y_k$ in $\hat{X}_k$. 
	\item\label{Zariski}  If 
	\begin{equation}\label{eq:star}\tag{$\ast$}
	\exists a_0,\ldots,a_k\in \mathbb{N} \quad {\rm s.t.} \quad 	\nu_k^* \oc_{X_k}(a_k,\ldots,a_1) \otimes \oc_{\hat{X}_k}(-a_0F)\ \ {\rm is\ ample},
	\end{equation}
	then $X$ is \emph{almost $k$-jet ample}. Here $F$ is an effective divisor on $\hat{X}_k$ defined by $\oc_{\hat{X}_k}(-F)=\nu_k^*\wk_{\infty}(X_k)$.
	\item Let $\xs\xrightarrow{\rho} T$ be a smooth and projective morphism between non-singular varieties. We denote by $\xs^{\rm rel}_{k}$ the  Demailly-Semple $k$-jet  tower of the relative directed variety $(\xs,T_{\xs/T})$. Take $\nu_k:\hat{\xs}_k^{\rm rel}\rightarrow \xs_k^{\rm rel}$ to be the blow-up of the asymptotic Wronskian ideal sheaf $\wk_{\infty}(\xs_k^{\rm rel})$. Then for any $t_0\in T$ writing $X_{t_0}:=\rho^{-1}(t_0)$, we have
	$
	\nu_k^{-1}(X_{t_0,k})=\hat{X}_{t_0,k}
	$. 
	\item 	Property \eqref{eq:star} is a Zariski open property. Precisely speaking, in the same setting as above, if there exists \(t\in T\) such that   \(X_{t}\) satisfies  \eqref{eq:star},  then there exists a non-empty Zariski open subset \(T_0\subset T\) such that for any \(s\in T_0\),  \(X_s\) satisfies \eqref{eq:star} as well. In particular, $X_s$ is almost $k$-jet ample for all $s\in T_0$.
\end{thmlist}
\end{thm}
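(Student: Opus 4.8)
The plan is to deduce all four assertions from two inputs already at hand: the functoriality of Demailly's construction of the towers $\pt_kV$ and $X_k$ \cite{Dem95}, and the description of the asymptotic Wronskian ideal sheaf obtained in \cref{brotbek wronskian} — namely that $\wk_\infty(X_k)$ is intrinsic (\cref{wronskian result}) and \emph{of product type} over charts: under a trivialization $U_k\simeq U\times\rc_{n,k}$ one has $\wk_\infty(X_k)_{\upharpoonright U_k}=\wk_{k,U}={\rm pr}_2^*\mathfrak{I}_{n,k}$, pulled back from the fibre variety $\rc_{n,k}$, which depends only on $n=\dim X$ and $k$; everything else will be standard blow-up and base-locus bookkeeping. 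For (i), a smooth submanifold $Y\subset X$ gives a closed immersion $Y_k\hookrightarrow X_k$ identifying $Y_k$ with the Demailly--Semple tower of $(Y,T_Y)$ \cite{Dem95}; choosing coordinates on $X$ adapted to $Y$, a product chart $U_k\simeq U\times\rc_{n,k}$ restricts over $V:=Y\cap U$ to $V_k\simeq V\times\rc_{m,k}$ ($m=\dim Y$), compatibly with the constant maps $\nu_{n,k}$ and $\nu_{m,k}$, which gives $\wk_\infty(X_k)\cdot\oc_{Y_k}=\wk_\infty(Y_k)$. Since $V(\wk_\infty(X_k))\cap Y_k\subseteq Y_k^{\rm sing}$ by \cite[Lemma 2.4]{Bro17} applied on $Y$, the subvariety $Y_k$ is not contained in the centre of $\nu_k$, so by the universal property of blowing up its strict transform in $\hat{X}_k$ is $\mathrm{Bl}_{\wk_\infty(X_k)\cdot\oc_{Y_k}}Y_k=\mathrm{Bl}_{\wk_\infty(Y_k)}Y_k=\hat{Y}_k$.

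For (ii), I would set $L:=\oc_{X_k}(a_k,\ldots,a_1)$ and let $A:=\nu_k^*L\otimes\oc_{\hat{X}_k}(-a_0F)$ be the ample bundle of \eqref{eq:star}, so that $\nu_k^*L\cong A\otimes\oc_{\hat{X}_k}(a_0F)$ is the twist of an ample bundle by an effective divisor, hence big; as $L$ has the same sections as $\nu_k^*L$ in every degree (projection formula and $(\nu_k)_*\oc=\oc$), $L$ is big on $X_k$. For the augmented base locus I would use that $\nu_k$ is an isomorphism over $X_k\setminus V(\wk_\infty(X_k))$, which contains $X_k\setminus X_k^{\rm sing}$ since $V(\wk_\infty(X_k))\subseteq X_k^{\rm sing}$ by \cref{wronskian result} together with \cite[Lemma 2.4]{Bro17}: given $x\notin X_k^{\rm sing}$ and any ample line bundle $H$ on $X_k$, for $m\gg 0$ the line bundle $A^m\otimes\nu_k^*H^{-1}$ is ample; lifting a section of a suitable power of it nonzero at the single point $\nu_k^{-1}(x)$ — which lies off the exceptional divisor $F$ — multiplying by the canonical section of the effective divisor $a_0mF$, and pushing the product down along $\nu_k$ yields a section of the corresponding power of $L^m\otimes H^{-1}$ nonzero at $x$. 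Hence $\mathbf{B}_+(L)\subseteq X_k^{\rm sing}$, so $L$ is big with augmented base locus inside $X_k^{\rm sing}$; this is exactly almost $k$-jet ampleness of $X$ (\cref{almost k ample}) with witness tuple $(a_1,\ldots,a_k)$.

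For (iii) and (iv), I would argue as follows. By functoriality \cite{Dem95} the relative tower $\xs_k^{\rm rel}$ restricts over the fibre $X_{t_0}=\rho^{-1}(t_0)$ to the absolute tower $X_{t_0,k}$, and locally $X_{t_0,k}\subset\xs_k^{\rm rel}$ is cut out by the pullback along the composite $\xs_k^{\rm rel}\to\xs\to T$ of a regular system of parameters of $\oc_{T,t_0}$ (using that $\rho$ is smooth, hence flat). In a product chart $\xs_k^{\rm rel}\simeq(\text{open in }\xs)\times\rc_{n,k}$, where $n=\dim X_{t_0}$ is independent of $t_0$ as $\rho$ is smooth, the centre $\wk_\infty(\xs_k^{\rm rel})={\rm pr}_2^*\mathfrak{I}_{n,k}$ is pulled back from the $\rc_{n,k}$-factor, in particular flat over $T$, so blowing it up commutes with restriction to the fibre and $\nu_k^{-1}(X_{t_0,k})=\mathrm{Bl}_{\wk_\infty(X_{t_0,k})}X_{t_0,k}=\hat{X}_{t_0,k}$, which is (iii). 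Flatness of the centre over $T$ also makes $\hat{\xs}_k^{\rm rel}\to T$ flat, and it is projective; writing $\mathcal{L}:=\nu_k^*\oc_{\xs_k^{\rm rel}}(a_k,\ldots,a_1)\otimes\oc_{\hat{\xs}_k^{\rm rel}}(-a_0F)$ with $\oc(-F)=\nu_k^*\wk_\infty(\xs_k^{\rm rel})$, part (iii) identifies $\mathcal{L}_{\upharpoonright\hat{X}_{t,k}}$ with the bundle tested in \eqref{eq:star} for $X_t$. Since ampleness on fibres is a Zariski-open condition on the base for a projective morphism, if some $X_t$ satisfies \eqref{eq:star} with data $(a_0,\ldots,a_k)$, then there is a non-empty Zariski open $T_0\ni t$ over which every fibre does, and by (ii) each $X_s$ with $s\in T_0$ is almost $k$-jet ample; this gives (iv).

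The step I expect to be the main obstacle is the compatibility of $\wk_\infty$ with restriction used in (i) and (iii) — establishing that the asymptotic Wronskian ideal sheaf is genuinely intrinsic and cut out over charts by the \emph{universal} ideal $\mathfrak{I}_{n,k}$ on $\rc_{n,k}$, so that it restricts correctly to submanifolds and to fibres of families. This is precisely what the reinterpretation of Brotbek's Wronskians via the constant linear map $\nu_{n,k}$ and \cref{wronskian result} of \cref{brotbek wronskian} is designed to supply; once it is available, the base-locus estimate in (ii) and the openness of ampleness in families used in (iv) are routine.
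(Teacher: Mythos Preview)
The paper does not actually prove this theorem: it is stated as \cite[Propositions 2.10, 2.11 and 2.13]{Bro17} and quoted without argument, immediately after the sentence ``We require the following results in \cite{Bro17} to proceed further.'' So there is no in-paper proof to compare your proposal against.

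That said, your sketch is essentially correct and, interestingly, leans on the paper's own contribution from \cref{brotbek wronskian} --- the local product description $\wk_\infty(X_k)_{\upharpoonright U_k}={\rm pr}_2^*\mathfrak{I}_{n,k}$ --- to streamline what in \cite{Bro17} is done by direct manipulation of Wronskians. Your arguments for (ii)--(iv) are standard and sound: the push-down of sections in (ii) uses that $X_k$ is smooth (hence normal) so $(\nu_k)_*\oc_{\hat{X}_k}=\oc_{X_k}$, the flat base-change for blow-ups in (iii) is exactly what the product form of the centre gives, and openness of relative ampleness handles (iv).

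The only step worth tightening is in (i): the equality $\wk_\infty(X_k)\cdot\oc_{Y_k}=\wk_\infty(Y_k)$ is not just ``compatibility of $\nu_{n,k}$ and $\nu_{m,k}$'' but requires that the pullback of $\mathfrak{I}_{n,k}$ along the closed immersion $\rc_{m,k}\hookrightarrow\rc_{n,k}$ equals $\mathfrak{I}_{m,k}$. One inclusion is clear (extend functions on $Y$ to $X$); for the other you should observe that for any germ of curve $f$ tangent to $Y$ one has $D^j(g)\big([f]_k\big)=D^j(g_{\upharpoonright Y})\big([f]_k\big)$, so every generator of $\wk_\infty(X_k)$ restricts on $Y_k$ to an element of $\wk_\infty(Y_k)$. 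With this remark, your strict-transform identification via the universal property of blow-up goes through.
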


\section{An effective  Nakamaye's theorem}\label{Nakamaye}
As mentioned in \cref{intro}, both \cite{BD15} and \cite{Bro17} applied  Nakamaye's Theorem on the augmented base locus \cite{Nak00} for  families of  zero-dimensional subschemes  to  provide  a
geometric control on base locus.  In this section  we   render their noetherianity arguments effective. 
\medskip

We start by setting notations as in \cite[\S 3]{Bro17}.  Consider $V:=H^0\big(\pb^N,\oc_{\pb^N}(\delta)\big)$,  which can be identified with  the space of homogeneous polynomials of degree $\delta$ in $\cb[z_0,\ldots,z_N]$. For any $J\subset \{0,\ldots,N\}$  we set
$$
\pb_J:=\{[z_0,\ldots,z_N]\in \pb^N \mid  z_j=0\ {{\rm if}}\ j\in J \}.
$$
Given any $\Delta\in {\rm Gr}_{k+1}(V)$ and $[z]\in \pb^N$, we denote by $\Delta([z])=0$ once   $P([z])=0$ for any  $P\in \Delta\subset V$.  
Define the \emph{universal family of complete intersections} to be
\begin{eqnarray}\label{universal}
\mathscr{Y}:=\{(\Delta,[z])\in {{\rm Gr}}_{k+1}(V)\times \pb^N \mid \Delta([z])=0 \}.
\end{eqnarray}
For any $J\subset\{0,\ldots,N\}$, set 
\begin{align}\label{univj}
\mathscr{Y}_J:=\mathscr{Y}\cap ({{\rm Gr}}_{k+1}(V)\times \pb_J).
\end{align} 
Let us denote by   $p:\ys\to {{\rm Gr}}_{k+1}(V)$ and $q:\ys\to \pb^N$ the   projection maps.   The next lemma is our starting point.
\begin{lem}\label{smooth grassmannian}
	For any $J\subset\{0,\ldots,N\}$, $\ys_J\rightarrow \pb_J$ is a locally  trivial holomorphic  fibration with fibers isomorphic to the Grassmannian ${\rm Gr}\big(k+1,{\rm dim}(V)-1\big)$. In particular, $\ys_J$ is a smooth projective manifold.
\end{lem}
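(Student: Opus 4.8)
The plan is to make the group action on $V$ explicit and transfer it to $\mathscr{Y}_J$. First I would recall that the linear group $G := \mathrm{GL}_{N+1}(\mathbb{C})$ acts on $\mathbb{P}^N$ and hence linearly on $V = H^0(\mathbb{P}^N, \mathcal{O}_{\mathbb{P}^N}(\delta))$ by $(g \cdot P)(z) := P(g^{-1}z)$; this action induces an action on $\mathrm{Gr}_{k+1}(V)$ and a diagonal action on $\mathrm{Gr}_{k+1}(V) \times \mathbb{P}^N$ that preserves $\mathscr{Y}$ (since $\Delta([z]) = 0 \iff (g\cdot\Delta)([gz]) = 0$). The key observation is that the parabolic subgroup $P_J \subset G$ stabilizing the linear subspace $\mathbb{P}_J$ acts transitively on $\mathbb{P}_J$ and hence, via the diagonal action, makes $\mathscr{Y}_J \to \mathbb{P}_J$ an equivariant fibration over a homogeneous space. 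By the standard fact that an equivariant fibration over a homogeneous space of a Lie group is locally trivial (choose a local section of $P_J \to \mathbb{P}_J$ near any point and use it to trivialize), it suffices to identify a single fiber.

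Next I would compute the fiber over a convenient base point, say $[z^0] := [1:0:\cdots:0] \in \mathbb{P}_J$ (so $0 \in J$, or rather after relabeling; more precisely pick any point of $\mathbb{P}_J$). The fiber $p^{-1}([z^0]) \cap \mathscr{Y}_J$ is $\{\Delta \in \mathrm{Gr}_{k+1}(V) \mid \Delta([z^0]) = 0\}$, i.e. the set of $(k+1)$-dimensional subspaces of $V$ contained in the hyperplane $V_{[z^0]} := \{P \in V \mid P([z^0]) = 0\}$. Since evaluation at a point is a nonzero linear functional on $V$, the subspace $V_{[z^0]}$ has dimension $\dim V - 1$, and the set of $(k+1)$-planes contained in a fixed subspace of dimension $\dim V - 1$ is exactly the Grassmannian $\mathrm{Gr}(k+1, \dim V - 1)$. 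This gives the claimed fiber.

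Finally, smoothness and projectivity of $\mathscr{Y}_J$ follow: it is a locally trivial fiber bundle over the smooth projective variety $\mathbb{P}_J \cong \mathbb{P}^{N - |J|}$ with smooth projective fiber $\mathrm{Gr}(k+1, \dim V - 1)$, hence is itself a smooth projective variety (local triviality plus smoothness of base and fiber gives smoothness; one can also note $\mathscr{Y}_J$ is closed in $\mathrm{Gr}_{k+1}(V) \times \mathbb{P}_J$, hence projective). I expect the only mildly delicate point to be the verification of local triviality — one must exhibit actual local trivializations rather than just invoke homogeneity; concretely, over the standard affine chart $\{z_{i_0} \neq 0\}$ of $\mathbb{P}_J$ one writes down an explicit holomorphic family of linear automorphisms of $V$ (coming from affine translations in the $\mathbb{P}^N$-coordinates complementary to $J$) carrying $V_{[z^0]}$ to $V_{[z]}$, and this produces the trivialization $\mathscr{Y}_J|_{\text{chart}} \cong \text{chart} \times \mathrm{Gr}(k+1, \dim V - 1)$; the rest is routine.
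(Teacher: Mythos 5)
Your proof is correct and follows essentially the same route as the paper: use the action of linear transformations of $\pb^N$ on $V$ (hence on ${\rm Gr}_{k+1}(V)\times\pb^N$) together with transitivity on the base to get local triviality, and identify the fiber over a point $[z^0]$ as the Grassmannian of $(k+1)$-planes inside the hyperplane of degree-$\delta$ polynomials vanishing at $[z^0]$. Your use of the parabolic stabilizer of $\pb_J$ just makes explicit, for general $J$, the transitivity argument the paper carries out only for $J=\varnothing$; apart from the harmless slip about whether $0\in J$ for the chosen base point, there is nothing to correct.
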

\begin{proof}
	Any linear transformation $g\in {\rm GL}_{N+1}(\cb)$ induces a natural action $\tilde{g}\in {\rm GL}(V)$, hence also induces a biholomorphism $\hat{g}$ of ${\rm Gr}_{k+1}(V)$. Observe that for any $[z]\in \pb^N$, $\hat{g}$ maps the fiber $q^{-1}([z])$ to $q^{-1} ([g\cdot z]) $ bijectively. Since ${\rm GL}_{N+1}(\cb)$ acts transitively on $\pb^N$, the fibration $q:\ys\rightarrow\pb^N$ can thus  be  trivialized locally. 
	
	Take a special point $[\mathbf{e}_0]:=[1,0,\ldots,0]\in \pb^N$. For any $P=\sum_{|I|=\delta}a_Iz^{I}\in V$,  $P([\mathbf{e}_0])=0$ if and only if the coefficient of $z_0^{\delta}$ in  $P$ is zero. If we denote by $V_0$ the subspace of $V$ spanned by $\{z^I\mid |I|=\delta, z^I\neq z_0^\delta \}$, then $q^{-1}([\mathbf{e}_0])={\rm Gr}_{k+1}(V_0)\simeq {\rm Gr}\big(k+1,{\rm dim}(V)-1\big)$. The lemma is thus proved.
\end{proof}
Observe that when $k+1\geqslant N$,  $p:\ys\rightarrow {\rm Gr}_{k+1}(V)$ is a \emph{generically finite to one} morphism. 
Let us denote by $\ls$ be the very ample line bundle on $\grs$ which is the pull back of $\oc(1)$ on ${\rm P}(\Lambda^{k+1}V)$ under the Pl\"ucker embedding $\grs\hookrightarrow  {\rm P}(\Lambda^{k+1}V)$. Then $p^*\ls_{\upharpoonright \ys_J}$ is a big and nef line bundle on $\ys_J$ for any $J\subset \{0,\ldots,N\}$. Write $p_J:\ys_J\rightarrow \grs$ and $q_J:\ys_J\rightarrow \pb_J$ for the natural projections, and define
\begin{align*}
E_J&:=\{y\in\ys \mid {{\rm dim}}_y\big(p_J^{-1}(p_J(y))\big)>0 \}\\
G^\infty_J&:=p_J(E_J)\subset \grs. 
\end{align*}
When $J=\varnothing $ we simply write $E:=E_{\varnothing }$ and $G^\infty:=G^\infty_{\varnothing }$.
 By the definition of \emph{null locus} \cite[Definition 10.3.4]{Laz04II},   $E_J={\rm Null}(p_J^*\ls)$.   It then follows from Nakamaye's theorem \cite{Nak00} that 
 $$
\mathbf{B}_+(p_J^*\ls)={\rm Null}(p_J^*\ls)=E_J. 
 $$
 Observe that the line bundle $\ls\boxtimes \oc_{\pb^N}(1)$ on $\grs\times \pb^N$ is ample, and so is its restriction to $\ys_J$.  Hence by the definition of augmented base locus and noetherianity, there exists $m_J\in \mathbb{N}$ such that
 \begin{align}\label{eq:nakamaye}
 {\rm Bs}\big( \ls^m\boxtimes \oc_{\pb_J}(-1)_{\upharpoonright \ys_J}\big)=\mathbf{B}_+(p_J^*\ls)=E_J\subset p_J^{-1}(G^\infty),  \quad \forall m\geqslant m_J.
 \end{align}
 We emphasize that  the value $M:={\rm max}\{m_J\mid J\subset \{0,\ldots,N\} \}$ concerns the second noetherianity argument in \cite{Bro17} resulting in the loss of effective lower degree bounds  $d_{{\rm Kob},n}$ in \cref{Brotbek}.  
 
 Instead of requiring \eqref{eq:nakamaye}, we will provide a slightly weaker base control but with an effective estimate on $M$, which still remains valid in  Brotbek's proof (see \cite[Remark 3.13]{Bro17}).
 \begin{thm}\label{weak nakamaye}
When $m\geqslant \delta^k$, for any  $J\subset \{0,\ldots,N\}$, one has
\begin{align}\label{eq:nakamaye valid}
{\rm Bs}\big( \ls^m\boxtimes \oc_{\pb_J}(-1)_{\upharpoonright \ys_J}\big) \subset p_J^{-1}(G_J^\infty).
\end{align}
 \end{thm}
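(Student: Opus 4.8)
\medskip

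Fix $J\subset\{0,\dots,N\}$ and $m\geqslant\delta^{k}$. It is enough to show that for every point $y_{0}=(\Delta_{0},[z_{0}])\in\ys_{J}$ with $\Delta_{0}:=p_{J}(y_{0})\notin G_{J}^{\infty}$ there is a global section of $\ls^{m}\boxtimes\oc_{\pb_{J}}(-1)$ over $\ys_{J}$ not vanishing at $y_{0}$. Since $\Delta_{0}\notin G_{J}^{\infty}=p_{J}(E_{J})$, the fibre $Z_{\Delta_{0}}:=p_{J}^{-1}(\Delta_{0})=\{[w]\in\pb_{J}\mid\Delta_{0}([w])=0\}$ has no positive-dimensional component, hence is a \emph{finite} subscheme of $\pb_{J}$ through $[z_{0}]$; in particular, by Krull's principal ideal theorem, if $\dim\pb_{J}>k+1$ no such $y_{0}$ exists and the statement is vacuous, so I may assume $1\leqslant\dim\pb_{J}\leqslant k+1$ (the case $\dim\pb_{J}=0$ being trivial).

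\textbf{Step 1 (reducing the negative twist to a vanishing condition).} Because $Z_{\Delta_{0}}$ is finite, I can choose a hyperplane $H\subset\pb_{J}$, with linear equation $\ell_{H}$, such that $[z_{0}]\notin H$ and $H\cap Z_{\Delta_{0}}=\varnothing$. By \cref{smooth grassmannian} the variety $\ys_{J}$ is smooth, and $D_{H}:=\ys_{J}\cap(\grs\times H)=q_{J}^{-1}(H)$ is a smooth irreducible divisor on $\ys_{J}$ with local equation $\ell_{H}$, so that $\oc_{\ys_{J}}(-D_{H})\simeq q_{J}^{*}\oc_{\pb_{J}}(-1)$. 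Consequently any $t\in H^{0}(\ys_{J},p_{J}^{*}\ls^{m})$ vanishing along $D_{H}$ gives a section $\sigma:=t/\ell_{H}\in H^{0}(\ys_{J},\ls^{m}\boxtimes\oc_{\pb_{J}}(-1))$ with $\sigma(y_{0})=t(y_{0})/\ell_{H}([z_{0}])$ and $\ell_{H}([z_{0}])\neq0$. Thus it suffices to produce $t\in H^{0}(\ys_{J},p_{J}^{*}\ls^{m})$ with $t|_{D_{H}}\equiv0$ and $t(y_{0})\neq0$.

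\textbf{Step 2 (the eliminant section).} Over $\grs\times\pb_{J}$ the scheme $\ys_{J}$ is the zero locus of the section $u$ of $S^{*}\boxtimes\oc_{\pb_{J}}(\delta)$ induced by $S\hookrightarrow V\otimes\oc\xrightarrow{\ \mathrm{ev}\ }\oc_{\pb_{J}}(\delta)$, where $S$ denotes the rank $k+1$ tautological subbundle; so over a point of $\ys_{J}$ a basis $P_{0},\dots,P_{k}$ of $\Delta$ consists of degree $\delta$ forms all vanishing at $[z]$. The section $t$ I propose is (a variant of) a classical resultant of $\ell_{H}$ with the forms of $\Delta$: in the model case $\dim\pb_{J}=k+1$ one takes $t:=\mathrm{Res}(\ell_{H},P_{0},\dots,P_{k})$, the resultant of these $k+2$ forms on $\pb_{J}\cong\pb^{k+1}$. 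By the Poisson product formula, $t$ equals, up to the nonzero factor given by the resultant on $H$ of the $P_{i}$, the product $\prod_{p}\ell_{H}(p)^{\mathrm{mult}_{p}}$ over the common zeros $p$ of $P_{0},\dots,P_{k}$ in $\pb_{J}$. On $\ys_{J}$ one has $[z]\in Z_{\Delta}$, so $[z]$ is one of those $p$; hence $t$ vanishes identically on $D_{H}=\{\ell_{H}([z])=0\}\cap\ys_{J}$ — note that $t$ does \emph{not} vanish on all of $\grs\times H$, so this vanishing is exactly the ``negative twist'' produced by the relation defining $\ys_{J}$. Semi-invariance of the resultant under change of basis of $\Delta$, with weight $\delta^{k}$ (its Bézout multidegree in the coefficients of each $P_{i}$), shows that $t$ is a well-defined section of $p_{J}^{*}\ls^{\delta^{k}}$ on $\ys_{J}$; for $m>\delta^{k}$ one replaces $t$ by $t\otimes\pi$ with $\pi\in H^{0}(\grs,\ls^{m-\delta^{k}})$ a Plücker monomial nonvanishing at $\Delta_{0}$.

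\textbf{Step 3 (non-vanishing, and the main obstacle).} Finally $t(y_{0})\neq0$: up to a nonzero constant it equals $\mathrm{Res}(\ell_{H},P_{0}^{0},\dots,P_{k}^{0})$ for a basis $P_{i}^{0}$ of $\Delta_{0}$, and this resultant is nonzero precisely when $\ell_{H},P_{0}^{0},\dots,P_{k}^{0}$ have no common zero in $\pb_{J}$; such a zero would lie in $H\cap Z_{\Delta_{0}}=\varnothing$. This produces the required section. The step I expect to be the real difficulty is Step 2: making the resultant construction genuinely canonical and globally defined on $\ys_{J}$ for \emph{all} $J$. When $\dim\pb_{J}<k+1$ the system $P_{0},\dots,P_{k}$ is over-determined on $\pb_{J}$, so one must select a generic $(\dim\pb_{J})$-dimensional subsystem of $\Delta$ and correct the non-canonical choice — by descending from the associated flag bundle over $\ys_{J}$, or by passing to a residual resultant — and check throughout that the Plücker degree is exactly $\delta^{k}$ and that the local expressions patch into honest global sections. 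This is an explicit but delicate elimination-theoretic bookkeeping on the universal family.
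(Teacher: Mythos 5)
Your Steps 1--3 are correct as far as they go, but they only go as far as the single case $\dim\pb_J=k+1$, which in the setting of the theorem (where $k+1\geqslant N$ and $\dim\pb_J=N-|J|$) means exactly $J=\varnothing$ and $k+1=N$. In that case your construction essentially reproduces the paper's first step: your resultant $t={\rm Res}(\ell_H,P_0,\ldots,P_k)$ is (up to scalar) the equation of the divisor $p_*q^*H\in|\ls^{\delta^{N-1}}|$, and $t/\ell_H$ is the paper's effective divisor $p^*p_*q^*H-q^*H\in|\ls^{\delta^{N-1}}\boxtimes\oc_{\pb^N}(-1)_{\upharpoonright\ys}|$, with the same avoidance argument at $\Delta_0$. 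So the completed portion of your proof is fine, but it is the easy half of the statement.

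The genuine gap is everything you defer at the end of Step 2 as ``delicate elimination-theoretic bookkeeping'': the overdetermined cases $\dim\pb_J<k+1$, i.e.\ all $J\neq\varnothing$ and all $k+1>N$, which is where the actual content of the theorem lies and where the paper spends most of its proof. This is not bookkeeping, for a structural reason: when $\dim\pb_J<k+1$ the locus $\{\Delta\mid Z_\Delta\cap H\cap\pb_J\neq\varnothing\}$ has codimension at least $k+2-\dim\pb_J\geqslant 2$ in the Grassmannian, so no resultant-type (or any) section can have it as its zero divisor; what one must produce instead is \emph{some} divisor in $|\ls^{\delta^k}|$ containing $F:=p_J\big(q_J^{-1}(H\cap\pb_J)\big)$ while avoiding the prescribed $\Delta_0$, and the effective degree bound $\delta^k$ is precisely the issue. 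The naive candidates fail: working with the resultant (equivalently $p_*q^*\tilde D$) on the ambient $\pb^N$ does not help because $Z_{\Delta_0}\subset\pb^N$ may be positive-dimensional even though $Z_{\Delta_0}\cap\pb_J$ is finite, so every such divisor may pass through $\Delta_0$ --- the paper states this caveat explicitly --- and extending the forms to $\pb^{k+1}$ and taking a resultant there fails because the pulled-back forms always vanish on the vertex linear subspace, making the resultant identically zero when $k>N$. The paper's way around this is the pair of ideas your sketch does not supply: first reduce $k+1>N$ to $k+1=N$ by embedding ${\rm Gr}_{k+1}(V)\subset{\rm Gr}_{k+1}(V_k)$ with $V_k=H^0(\pb^{k+1},\oc(\delta))$ and restricting sections from the larger universal family (using $G_J^\infty=\tilde G_J^\infty\cap{\rm Gr}_{k+1}(V)$), and then, for $J\neq\varnothing$, act on $H=p_*q^*D$ by an explicit element $g_0$ of the block-lower-triangular subgroup $G\subset GL(V)$ preserving the decomposition $V=V_1\oplus V_2$, chosen so that $F\subset g_0(H)$ but $\Delta_0\notin g_0(H)$; then $p_J^*g_0(H)-q_J^*(D\cap\pb_J)$ does the job. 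Your proposed fixes (choosing a generic subsystem of $\Delta$, flag-bundle descent, residual resultants) are not carried out, and it is exactly in making such a choice canonical of degree $\leqslant\delta^k$ and base-point-free away from $p_J^{-1}(G_J^\infty)$ that the difficulty sits; as written, the proposal does not prove the theorem beyond the case $J=\varnothing$, $k+1=N$.
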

 

To prove \cref{weak nakamaye}, we construct  \emph{sufficiently many} global sections of $\ls^m\boxtimes \oc_{\pb_J}(-1)_{\upharpoonright \ys_J}$ in an explicit manner to control their base locus. Precisely speaking,  for any $\Delta\notin G^{\infty}_{J}$, by definition $p_J^{-1}(\Delta)$ is a finite set. We will show that  for each $m\geqslant \delta^{k}$  there exists  an effective divisor $D_\Delta\in |\ls^m\boxtimes \oc_{\pb_J}(-1)_{\upharpoonright \ys_J}|$ so that $D_\Delta\cap p_J^{-1}(\Delta)=\varnothing$.


Let us first recall a version of  \emph{projection formula} in \emph{intersection theory}, which is  indeed a direct consequence of  \cite[Example 8.1.7]{Ful13}.\noindent
\begin{thm}[Projection formula]\label{projection formula}
	Let $f:X\rightarrow Y$ be a generically finite to one and surjective morphism between non-singular irreducible varieties, and $x$ (resp. $y$) be cycle on $X$ (resp. $Y$) of dimension $k$ (resp. ${\rm dim}(X)-k$). Then 
	$$
	{\rm deg}\Big(f_*\big(f^*(y)\cdot x\big)\Big)={\rm deg}\big(y\cdot f_*(x)\big),
	$$
	where $f^*$ and $f_*$ are defined in the Chow group. When the scheme-theoretic inverse image $f^{-1}(y)$ is of pure dimension ${\rm dim}(X)-k$, one has $f^*(y)=[f^{-1}(y)]$.
\end{thm}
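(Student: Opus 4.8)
The plan is to obtain the statement as an essentially formal consequence of the refined Gysin calculus developed in \cite[Ch.~6--8]{Ful13}. First I would unwind the operations occurring in the statement. Because $Y$ is non-singular, every morphism $f\colon X\to Y$ admits a pullback $f^{*}\colon A_{*}(Y)\to A_{*}(X)$, defined in \cite[\S 8.1]{Ful13} by factoring $f$ through its graph $\gamma_{f}\colon X\to X\times Y$ --- a regular embedding, since $Y$ is smooth --- and composing the exterior product with the refined Gysin homomorphism $\gamma_{f}^{!}$; because $X$ is non-singular, it carries the intersection product $\alpha\cdot\beta:=\delta_{X}^{!}(\alpha\times\beta)$, $\delta_{X}$ the diagonal. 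Since $f$ is generically finite and surjective we have $\dim X=\dim Y=:n$, so $y\in A_{n-k}(Y)$ gives $f^{*}(y)\in A_{n-k}(X)$ and $f^{*}(y)\cdot x\in A_{0}(X)$, while $f_{*}(x)\in A_{k}(Y)$ and $y\cdot f_{*}(x)\in A_{0}(Y)$; here $f_{*}$ requires $f$ proper, which holds in each instance we apply the statement (the relevant $p_{J}$ being projective). In particular both sides of the asserted equality are the degrees of honest $0$-cycle classes.

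Next I would invoke the projection formula for a proper morphism between non-singular varieties, \cite[Example~8.1.7]{Ful13}: it gives the identity of $0$-cycle classes
$$
f_{*}\bigl(f^{*}(y)\cdot x\bigr)\;=\;y\cdot f_{*}(x)\qquad\text{in }A_{0}(Y).
$$
Applying the degree map $\deg\colon A_{0}(Y)\to\zbb$ to both sides yields precisely $\deg\bigl(f_{*}(f^{*}(y)\cdot x)\bigr)=\deg\bigl(y\cdot f_{*}(x)\bigr)$, which is the first part of the statement.

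For the second part I would appeal to the compatibility of the refined Gysin pullback with set-theoretic inverse image when the intersection is dimensionally proper. Writing a general cycle as $y=\sum_{i}n_{i}[V_{i}]$ with $V_{i}\subset Y$ irreducible and using that $\gamma_{f}^{-1}(X\times V_{i})=f^{-1}(V_{i})$ scheme-theoretically, the hypothesis that $f^{-1}(y)$ is of pure dimension $n-k$ forces each of these fibre schemes to have the expected codimension, so the regular embedding $\gamma_{f}$ meets $X\times V_{i}$ properly; in that case the Koszul complex of $\gamma_{f}$ resolves $\oc_{X\times V_{i}}$ locally, the higher Tor-terms vanish, and $\gamma_{f}^{!}[X\times V_{i}]$ --- hence $f^{*}[V_{i}]$ --- equals the fundamental cycle $[f^{-1}(V_{i})]$ of the scheme-theoretic fibre with its natural multiplicities (\cite[Ch.~7--8]{Ful13}). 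Summing over $i$ gives $f^{*}(y)=[f^{-1}(y)]$.

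I expect no genuine obstacle: the entire content lies in \cite{Ful13}, and the only thing to be careful about is a bookkeeping point --- recording $\dim X=\dim Y$ so that the abstract projection formula, customarily written $f_{*}(\alpha\cdot f^{*}\beta)=f_{*}(\alpha)\cdot\beta$ for $\alpha\in A_{*}(X)$, $\beta\in A_{*}(Y)$, specializes to the degree-of-a-$0$-cycle form needed in the rest of the paper.
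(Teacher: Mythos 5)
For the displayed formula your route is the paper's own: the paper offers no proof beyond the remark that the statement is a direct consequence of \cite[Example 8.1.7]{Ful13}, and your unwinding of $f^{*}$ through the graph embedding $\gamma_{f}$, the bookkeeping $\dim X=\dim Y$ making both sides degrees of $0$-cycles, and the observation that $f_{*}$ (and hence the statement as used) tacitly requires $f$ proper --- automatic in the applications since the relevant $p_{J}$ are projective --- is a faithful expansion of that citation. No issue with that part.

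The gap is in your justification of the final sentence, $f^{*}(y)=[f^{-1}(y)]$ under pure dimensionality. Properness of the intersection does \emph{not} imply that the Koszul resolution of $\oc_{\gamma_{f}(X)}$ stays exact after tensoring with $\oc_{X\times V_{i}}$: the regular sequence cutting out the graph remains regular on $\oc_{X\times V_{i}}$ (equivalently, the higher Tor's vanish) only under a Cohen--Macaulay condition at the generic points of $f^{-1}(V_{i})$. This is exactly Fulton's comparison between the canonical intersection multiplicities and the lengths of the scheme-theoretic intersection, \cite[Proposition 7.1]{Ful13}, and \cite[Example 7.1.5]{Ful13} shows that for a non-Cohen--Macaulay $V_{i}$ the cycle $f^{*}[V_{i}]$ can differ from $[f^{-1}(V_{i})]$ even though the intersection has the expected dimension. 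So your ``higher Tor-terms vanish'' step needs an extra hypothesis: either that $f^{-1}(V_{i})$ is Cohen--Macaulay at the generic points of its components, or that $V_{i}$ is a local complete intersection (e.g.\ smooth) in $Y$, so that its local equations pull back to a regular sequence on the Cohen--Macaulay (indeed smooth) variety $X$ as soon as the inverse image has the right codimension. This repair costs nothing for the paper: in the only places the sentence is invoked, $y$ is the smooth degree-one curve $C$ in the Grassmannian or a hyperplane $D$, and the ambient total spaces $\ys$, $\ys_{J}$ are smooth, so the regular-sequence argument applies and $f^{*}(y)=[f^{-1}(y)]$ does hold there; but as written your general deduction, like a literal reading of the theorem's last sentence, overstates what pure dimensionality alone gives.
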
	

\begin{proof}[Proof of \cref{weak nakamaye}]
	We first deal with the case $k+1=N$, and  then reduce the general setting $k+1\geqslant  N$ to this case.
 \begin{claim}\label{claim:non-vanishing}
 When $k+1=N$, 	$H^0\big(\ys,   \ls^m\boxtimes \oc_{\pb^N}(-1)_{\upharpoonright \ys}\big) \neq \varnothing$   for all  $m\geqslant  \delta^{N-1}$.
 	\end{claim}
 \begin{proof}   Let us pick a smooth curve $C$ in ${\rm Gr}_{N}(V)$ of degree $1$ with respect to $\ls$,  given by 
$$
\Delta([t_0,t_1]):={{\rm Span}}(z_1^\delta, z_2^\delta,\ldots z_{N-1}^\delta, t_0z_N^\delta+t_1z_0^\delta),
$$
where $[t_0,t_1]\in \pb^1$. Indeed, the curve $C$ is the line in the Pl\"ucker embedding ${\rm P}(\Lambda^{N}V)$ defined by two vectors $z_{1}^{\delta}\wedge \cdots \wedge z_{N-1}^{\delta}\wedge z_0^{\delta}$ and $z_1^{\delta}\wedge   \cdots \wedge z_{N}^{\delta}$ in $\Lambda^{N}V$. Hence   $\ls\cdot C=1$.

Consider a  hyperplane $D$ in $\pb^N$ given by  $\{[z_0,\ldots,z_N]\mid z_0+z_N=0\}$. Since $p:\ys\to {\rm Gr}_N(V)$ is a generically finite to one and surjective morphism, $p_*q^*D$ is an effective  divisor in ${\rm Gr}_N(V)$.  
 	
	Since $p^{-1}(C)$ has pure dimension $1$, then $p^*C$ is a 1-cycle in $\ys$.
	An easy computation shows that $p^*C$ and $q^*D$ intersect only at the point 
	$${{\rm Span}}(z_1^\delta, z_2^\delta,\ldots z_{N-1}^\delta,\\ z_N^\delta+(-1)^{\delta+1}z_0^\delta)\times [1,0,\ldots,0,-1]\in \ys$$
	with multiplicity $\delta^{N-1}$. Hence $p^*C\cdot q^*D=\delta^{N-1}$. By \cref{projection formula}, one has
	\begin{align*}
		C\cdot p_*(q^*D)=p_* (p^*C \cdot q^*D )=\delta^{N-1}.
	\end{align*}
Note that the Picard group ${\rm Pic}\big({\rm Gr}_{N}(V)\big)\simeq \zbb$ is generated by  $\ls$, which in turn implies 
\begin{align}\label{intersection}
p_*q^*D\in  |\ls^{\delta^{N-1}}|
\end{align} by the fact that $\ls\cdot C=1$.
It follows from \cref{smooth grassmannian} that $q^*D$ is a  smooth hypersurface in $\ys$. Since ${\rm Supp}(q^*D)\subset {\rm Supp}(p^*p_*q^*D)$, $p^*p_*q^*D-q^*D$ is thus an effective divisor  of $\ys$, and by \eqref{intersection}
\begin{align}\label{intersection2}
p^*p_*q^*D-q^*D\in |\ls^{\delta^{N-1}}\boxtimes \oc_{\pb^N}(-1)_{\upharpoonright \ys}|.
\end{align}
The claim follows from the fact that $\ls$ is very ample. 
\end{proof}
The base locus of $|\ls^{\delta^{N-1}}\boxtimes \oc_{\pb^N}(-1)_{\upharpoonright \ys}|$ can be well understood.
\begin{claim}\label{claim:base control}
	For any $m\geqslant \delta^{N-1}$, the base locus
	\begin{eqnarray}\label{claim}
	{{\rm Bs}}\big(\ls^{m}\boxtimes \oc_{\pb^N}(-1)_{\upharpoonright \ys}\big)\subset p^{-1}(G^{\infty}).
	\end{eqnarray}
\end{claim}
\begin{proof}
	For given any $\Delta_0\notin G^{\infty}$,    $ p^{-1}(\Delta_0)$ is a finite set by the definition of $G^{\infty}$.  Then one can take  a general hyperplane $D\in |\oc_{\pb^N}(1)|$ such that $D\cap q\big(p^{-1}(\Delta_0)\big)=\varnothing $.  By \eqref{intersection2}, $D$ gives rise to an effective divisor $$p^*p_*q^*D-q^*D\in |\ls^{\delta^{N-1}}\boxtimes \oc_{\pb^N}(-1)_{\upharpoonright \ys}|.$$
	For any $\Delta\in {\rm Gr}_{N}(V)$, if we denote by $${\rm Int}(\Delta):=\{[z]\in\pb^N \mid  \Delta([z])=0\},$$
	then $q\big(p^{-1}(\Delta)\big)={{\rm Int}}(\Delta)$. Hence the condition   $D\cap q\big(p^{-1}(\Delta_0)\big)=\varnothing $ is equivalent to that ${{\rm Int}}(\Delta_0)\cap D=\varnothing $. On the other hand, for any $\Delta\in {\rm Supp}(p_*q^*D)$,  one has  ${{\rm Int}}(\Delta)\cap D\neq \varnothing $, and thus we conclude that  $\Delta_0\notin {\rm Supp}(p_*q^*D)$.  In particular, 
	$$
	p^{-1}(\Delta_0) \cap  {\rm Supp} (p^*p_*q^*D-q^*D)=\varnothing.
	$$
	As $\Delta_0$ is an arbitrary point outside $G^{\infty}$, we conclude that
	$$
	{{\rm Bs}}\big(\ls^{\delta^{N-1}}\boxtimes \oc_{\pb^N}(-1)_{\upharpoonright \ys}\big)\subset p^{-1}(G^{\infty}).
	$$
	Since $\ls$ is very ample,   we have
	$$
	{{\rm Bs}}\big(\ls^{m}\boxtimes \oc_{\pb^N}(-1)_{\upharpoonright \ys} \big)\subset 	{{\rm Bs}}\big(\ls^{\delta^{N-1}}\boxtimes \oc_{\pb^N}(-1)_{\upharpoonright \ys} \big)\subset p^{-1}(G^{\infty})
	$$
	for any $m\geqslant \delta^{N-1}$. 
	The claim is thus proved.
\end{proof}

Let us  deal with the general case $J\supsetneq \varnothing$. Without loss of generality we can assume that $J=\{n+1,\ldots,N\}$.  
For any $\Delta_0\in p_J(\ys_J)\setminus G_J^{\infty}$,   the set $ p_J^{-1}(\Delta_0)={\rm Int}(\Delta_0)\cap \pb_J$ is finite.  We can also take a general hyperplane $D\in  |\oc_{\pb^N}(1)|$ such that ${{\rm Int}}(\Delta_0)\cap D\cap \pb_J=\varnothing $. One can further choose a proper coordinate for $\pb^N$ such that $D=(z_n=0)$.

By \cref{smooth grassmannian}, $q_J^{*}(D\cap \pb_J)$ is a  smooth hypersurface  in $\ys_J$. Set $F:=p_J\big(q_J^{-1}(D\cap \pb_J)\big)$ set-theoretically.  Then for any effective  divisor $\tilde{H}\in |\ls^m|$ on ${\rm Gr}_N(V)$ such that $F\subset {\rm Supp}(\tilde{H})$ and $p_J(\ys_J)\not\subset {\rm Supp}(\tilde{H})$,  
\begin{align}\label{eq:effective}
p_J^*(\tilde{H})-q_J^{*}(D\cap \pb_J)\in | \ls^m\boxtimes  \oc_{\pb^N}(-1)_{\upharpoonright \ys_J}|
\end{align}
is an effective divisor of $\ys_J$.  However,  it may happen that for any hyperplane $\tilde{D}\in  |\oc_{\pb^N}(1)|$, all  constructed divisors of the form $p_*q^*(\tilde{D})$ will always contain $\Delta_0$.

\medskip

Choose a decomposition of $V=V_1\oplus V_2$ such that $V_1$ is spanned by the vectors $\{z^\alpha\in V \mid  \alpha_n=\cdots=\alpha_N=0\}$ and $V_2$ is spanned by other $z^\alpha$'s. Let us denote $G$ to be the subgroup of the general linear group $GL(V)$ which is the lower triangle matrix with respect to the decomposition of $V=V_1\oplus V_2$ as follows:
\begin{align}\label{matrix}
G:=\Big\{g\in GL(V)\mid g= 
\begin{bmatrix}
I & 0\\
A & B
\end{bmatrix}, B\in GL(V_2), A\in {\rm Hom}(V_1,V_2)
\Big\}.
\end{align}
The subgroup $G$ also induces  a natural group action on the Grassmannian ${{\rm Gr}}_{N}(V)$, and  we have the following
\begin{claim}\label{action}
	Set $H:=p_*(q^*D)$. Then for any $g\in G$, $F\subset g(H)$ and there exists a $g_0\in G$ such that $\Delta_0\notin g_0(H)$.
\end{claim}
\begin{proof}
	For any $\Delta\in {{\rm Gr}}_{N}(V)$, choose $\{s_1,\ldots,s_N\}\subset V$ which spans $\Delta$. Let $s_i=u_i+v_i$ be the unique decomposition of $s_i$ under $V=V_1\oplus V_2$.  Recall that $F:=p_J\big(q_J^{-1}(D\cap \pb_J)\big)$.    Then   
	\begin{align}\label{eq:equivalent}
	\Delta\in F  \iff \cap_{i=1}^N(u_i=0)\cap \pb^{n-1}\neq \varnothing,
	\end{align} where $\pb^{n-1}:=\{[z_0,\ldots,z_N]\in \pb^N\mid  z_j=0 \mbox{ for } j\geqslant n \}=D\cap \pb_J$, and we can identify $V_1$ with $H^0\big(\pb^{n-1},\oc_{\pb^{n-1}}(\delta)\big)$.
	
	For any $g\in GL(V)$, $g(\Delta)$ is spanned by $\{g(s_1),\ldots,g(s_N)\}$. By the definition of $G$, for any $g\in G$, we have the decomposition $g(s_i)=u_i+v_i'$ with respect to $V=V_1\oplus V_2$ which keeps the $V_1$ factors invariant.  Then $g(F)= F$ for any $g\in G$ by \eqref{eq:equivalent}.  The first statement   follows from the fact  $F\subset H$.

Now we take $\{t_1,\ldots,t_N\}\subset V$ which spans $\Delta_0$. Denote  $t_i=u_i+v_i$ to be the   decomposition of $t_i$ under $V=V_1\oplus V_2$. By our choice of $D$, ${\rm Int}(\Delta_0)\cap \pb^{n-1}=\varnothing $,  which is equivalent to $\cap_{i=1}^N(u_i=0)\cap \pb^{n-1}= \varnothing $ by \eqref{eq:equivalent}.   We can then choose the proper basis $\{t_1,\ldots,t_N\}$ spanning $\Delta_0$, so that
	\begin{thmlist} 
		\item $\cap_{i=1}^{n}(u_i=0)\cap \pb^{n-1}=\varnothing $;
		\item for some $m\geqslant n$, $\{u_1,\ldots,u_m\}$ is a set of vectors in $V_1$ which is linearly independent;
		\item $u_{m+1}=\cdots=u_N=0$.
	\end{thmlist} 
	Then $\cap_{i=1}^{n}(u_i=0)\cap \{z_n=0\}=\pb^{N-n-1}:=\{[z_0,\ldots,z_N]\in \pb^N\mid z_j=0 \mbox{ for } j\leqslant n \}$, and $\{v_{m+1},\ldots, v_N\}$ is a set of linearly independent vectors in $V_2$.
	
	Take a point  $\Delta'\in {{\rm Gr}}_{N}(V)$ spanned by
	\begin{align*}
		\begin{cases}
			\tilde{t}_{1}:=u_{1}\\
			\vdots\\
			\tilde{t}_{n}:=u_{n}\\
			\tilde{t}_{n+1}:=u_{n+1}+z_{n+1}^{\delta}\\
			\vdots\\
			\tilde{t}_{m}:=u_{m}+z_{m}^{\delta}\\
			\tilde{t}_{m+1}:=u_{m+1}+z_{m+1}^{\delta}=z_{m+1}^{\delta}\\
			\vdots\\	
			\tilde{t}_N:=u_N+z_N^{\delta}=z_N^{\delta}\\
		\end{cases}.
	\end{align*}
	Then one can easily observe that ${{\rm Int}}(\Delta')\cap  (z_n=0)=\varnothing $, which is equivalent to that $\Delta'\notin H=p_*q^*(D)$.  We will find a $g_0\in G$ such that $g_0(\Delta')=\Delta_0$. 
	
	Indeed, since $\{v_{m+1},\ldots, v_N\}\subset V_2$  and $\{u_1,\ldots,u_m\}\subset V_1$ are both linearly independent, we can find a $B\in GL(V_2)$ such that $B(z^\delta_i)=v_i$ for all $i\geqslant m+1$,  and $A\in \mbox{Hom}(V_1,V_2)$ satisfying that 
	\begin{align*}
	\begin{cases}
A(u_i)=v_i\quad  &{\rm for}\quad  1\leqslant i\leqslant n, \\
A(u_j)=v_j-B(z_j^\delta)\quad   &{\rm for} \quad  n+1\leqslant j\leqslant m.
	\end{cases}
	\end{align*}
	Set  $g_0:= 
	\begin{bmatrix}
	I & 0\\
	A & B
	\end{bmatrix}$ which is of the type \eqref{matrix}. We have
	$$g_0(\Delta')={\rm Span}\{g_0(\tilde{t}_1),\ldots,g_0(\tilde{t}_N)\}={\rm Span}\{t_1,\ldots,t_N\}= \Delta_0.$$ 
	Recall that  $\Delta'\notin H$.  Then  $\Delta_0\notin g_0(H)$ and we finish the proof of the claim.
\end{proof}

Since $H\in |\ls^{\delta^{N-1}}|$ by \eqref{intersection}, we claim that $g_0(H)\in |\ls^{\delta^{N-1}}|$. Indeed, since the complex general linear group $GL(V)$ is connected, the biholomorphism  of ${\rm Gr}_{N}(V)$ induced by $g_0\in GL(V)$ is homotopic to the identity map, and thus $H$ and $g_0(H)$ lie on the same linear system. By \cref{action}, $F\subset g_0(H)$ and $\Delta_0\notin g_0(H)$.  By \eqref{eq:effective}, the divisor 
$$p_J^*\big(g_0(H)\big)-q_J^{*}(D\cap \pb_J)\in | \ls^{\delta^{N-1}}\boxtimes   \oc_{\pb^N}(-1)_{\upharpoonright \ys_J}|$$ 
is effective and avoids the finite set $p_J^{-1}(\Delta_0)$.

Note that $\Delta_0\in {\rm Gr}_{N}(V)$ is an arbitrary point in $p_J(\ys_J)\setminus G_J^{\infty}$. This in turn  proves \cref{weak nakamaye} for the case  $k+1=N$.  

\medskip

Let us show how to deal with the general cases $k+1>N$.  

For any $J\subset \{0,\ldots,N\}$, one can see $\pb_J\subset \pb^N$ as subspaces of $\pb^{k+1}$ defined by 
\begin{align*}
{\pb^N}&:=\{ [z_0,\ldots,z_{k+1}] \in \pb^{k+1}  \mid z_{N+1}=\cdots=z_{k+1}=0\},\\
\pb_{{J}}&:=\big\{[z_0,\ldots,z_{k+1}]\in \pb^{k+1} \mid z_j=0\ {{\rm if}}\ j\in J\cup \{N+1,\ldots,k+1\} \big\}.
\end{align*}
Set $V_k:=H^0\big(\pb^{k+1},\oc_{\pb^{k+1}}(\delta)\big)$,  
and 
$$
\tilde{\ys}_J:=\{(\Delta,[z])\in {{\rm Gr}}_{k+1}(V_k)\times \pb_{{J}}\mid  \Delta([z])=0 \}.
$$
 There is a natural inclusion ${\rm Gr}_{k+1}(V)\subset {\rm Gr}_{k+1}(V_k)$.  
Define $\tilde{p}_J:\tilde{\ys}_J\rightarrow {\rm Gr}_{k+1}(V_k)$ and $\tilde{q}_J:\tilde{\ys}_J\rightarrow \pb_{{J}}$ to be the natural   projections. Set 
$$\tilde{G}_J^{\infty}:=\{\Delta\in {{\rm Gr}}_{k+1}(V_k)\mid \tilde{p}_J^{-1}(\Delta) \mbox{ is not finite set}\}.$$
Hence by the above arguments, for $m\geqslant \delta^{k}$,  we  have
\begin{align}\label{eq:inclusion}
 {{\rm Bs}}(\ls_k^m\boxtimes \oc_{\pb^{k+1}}(-1)_{\upharpoonright \tilde{\ys}_{{J}}}) 
 \subset  \tilde{p}_{J}^{-1}(\tilde{G}_{J}^{\infty}), \end{align}
 where $\ls_k$ is the tautological line bundle  on ${\rm Gr}_{k+1}(V_k)$.  
  
Recall that  $\ys\subset {\rm Gr}_{k+1}(V)\times \pb^N$ and $\ys_{{J}}\subset {\rm Gr}_{k+1}(V)\times \pb_J$ are  the universal families of complete intersections defined in \eqref{universal} and \eqref{univj}. The inclusion  $\iota_k:{\rm Gr}_{k+1}(V)\hookrightarrow{\rm Gr}_{k+1}(V_k)$  induces the following inclusions 
$$ \begin{tikzcd}
{\ys}_J  \ar[d,hook]  \ar[r,hook] & {\rm Gr}_{k+1}(V)\times \pb_{{J}} \ar[d, hook, "\iota_k\times \mathds{1}"]      \\
	\tilde{\ys}_J  \ar[r,hook] &{\rm Gr}_{k+1}(V_k)\times\pb_{{J}} 
\end{tikzcd}
$$  
Observe that  $
{G}_J^{\infty}=\tilde{G}_{{J}}^{\infty}\cap {\rm Gr}_{k+1}(V)
$.   Note that   
$\iota_k^*\ls_k:=\ls$, which  is still the tautological line bundle on ${\rm Gr}_{k+1}(V)$.  
Hence by the above arguments, for $m\geqslant \delta^{k}$,  we  have
\begin{align*} 
{{\rm Bs}}( \ls^m\boxtimes \oc_{\pb^{N}}(-1)_{\upharpoonright {\ys}_J})&= {{\rm Bs}}( \ls_k^m\boxtimes \oc_{\pb^{k+1}}(-1)_{\upharpoonright {\ys}_J})\\ 
&\subset  {{\rm Bs}}(\ls_k^m\boxtimes \oc_{\pb^{k+1}}(-1)_{\upharpoonright \tilde{\ys}_{{J}}})\cap  {\ys}_J\\ 
&\subset  \tilde{p}_{J}^{-1}(\tilde{G}_{J}^{\infty})\cap  {\ys}_J \quad\quad\quad\quad \big({\rm by} \  \eqref{eq:inclusion}\big) \\  
&=   {p}_J^{-1}\big ( {G}_J^{\infty} \big),
\end{align*}
where $p_{J}:\ys_{J}\rightarrow {\rm Gr}_{k+1}(V)$ and $q_{J}:\ys_{J}\rightarrow \pb_{{J}}$ are the projection maps. 
This in turn proves  \cref{weak nakamaye} for  the general cases $k+1\geqslant N$. 
\end{proof}

\begin{rem}
	Let us mention that 
	the  proof of \cref{weak nakamaye} is indeed constructive, and we do not rely on the general results by Nakamaye. 
\end{rem}

Now we are able to apply \cref{wronskian result,weak nakamaye} to prove \cref{main:Kobayashi}  using the explicit  formula of $d_{{\rm Kob},n}$ in \cite{Bro17}. 
\begin{proof}[Proof of \cref{main:Kobayashi}]
In \cite[p. 18]{Bro17}, Brotbek obtained the following formula
$$
d_{{\rm Kob},n}=  m_{\infty}(X_k,\as)+\delta+(R+k)\delta,
$$
where  $R:=M(k+1)\big(m_{\infty}(X_k,\as)+\delta-1+k\delta\big)+1$ with $M\in \mathbb{N}$  the lower bound of $m$ so that \eqref{eq:nakamaye valid} remains valid, and one can take $k=n-1$,  $\delta=n^2$ by \cite{Bro17}.    By \cref{wronskian result,weak nakamaye}, we can take  $m_{\infty}(X_k,\as)=k=n-1$,   and $M=\delta^k=\delta^{n-1}$. Hence
\begin{align*}
d_{{\rm Kob},n}&\leqslant  m_{\infty}(X_k,\as)+\delta+(R+k)\delta\\
&=k+\delta+\delta\Big( \delta^k(k+1)\big(k+\delta-1+k\delta\big)+1+k\Big)\\
&= n^{2n+1}(n^3+n-2)+n^3+n^2+ n-1\\
&\leqslant n^{2n+3}(n+1),
\end{align*} 
and the theorem follows.
\end{proof}
 \begin{rem} Along Siu's line of slanted vector fields on higher jet spaces outlined in   \cite{Siu04},      Diverio-Merker-Rousseau \cite{DMR10} first  proved the  \emph{weak hyperbolicity} (say that a projective  variety $X$ is weakly hyperbolic if all  entire curves lie in a proper subvariety $Y\subsetneq X$)   of general hypersurfaces in $\pb^n$ of degree $d\geqslant 2^{(n-1)^5}$. This lower bound was improved by  Demailly \cite{Dem11} to     $d\geqslant\floor[\Big]{\frac{n^4}{3}\Big(n\log\big(n\log(24n)\big)\Big)^n}$, and the latest best known bound $d\geqslant (5n)^2n^n$ 
 	was obtained by  Darondeau   \cite{Dar15}.  Very recently, Demailly \cite{Dem18} gave  a simple proof of the Kobayashi conjecture as well as an effective lower bound $d_{{\rm Kob},n}= \frac{1}{5}\big(e(n-1)\big)^{2n}$ for the degrees.\footnote{After the submission of the final version of the present paper, there are some new progress on the Kobayashi conjecture by Riedl-Yang \cite{RY18}. Based on the result in \cite{RY18} and the previous work by Darondeau \cite{Dar15}, Merker \cite{Mer18} was able to slightly improve  the effective bounds in \cref{main:Kobayashi}.}   
\end{rem}

\medskip

Now we will generalize \cref{weak nakamaye}  to the cases of products of Grassmannians.  Let us  fix  $c,k,n\in  \mathbb{N}$ with $c(k+1) \geqslant n$.    Write  $V_{\delta_i}:=H^0\big(\pb^n,\oc_{\pb^n}(\delta_i)\big)$ and $\gf :=\prod_{i=1}^{c}{\rm Gr}_{k+1}(V_{\delta_i})$ for any $(\delta_1,\ldots,\delta_c)\in \mathbb{N}^c$.  Set $\ys$   to be the \emph{universal family of complete intersections} defined by 
\begin{align}\label{eq:generalized universal}
\ys:=\{(\Delta_1,\ldots,\Delta_c,[z])\in \gf\times \pb^n \mid  \Delta_i([z])=0, \forall\ i=1,\ldots,c\}.
\end{align}
Denote by $p:\ys \rightarrow \gf$ and $q:\ys \rightarrow \pb^n$     the   projection maps.  Then $p$ is a generically finite to one   morphism. 	Define a group homeomorphism
\begin{align}\label{eq:group}
	\ls:\zbb^c&\rightarrow {{\rm Pic(\gf)}}\\\nonumber
	\mathbf{a}=(a_1,\ldots,a_c)&\mapsto \oc_{{\rm Gr}_{k+1}(V_{\delta_1})}(a_1)\boxtimes \cdots \boxtimes \oc_{{\rm Gr}_{k+1}(V_{\delta_c})}(a_c)
\end{align}
which is indeed an isomorphism.

Let us  introduce  $c$-smooth curves  $C_1,\ldots,C_c$ on $\gf$,  defined by 
\begin{eqnarray*}
	\Delta_i([t_0,t_1]):={{\rm Span}}(z_1^{\delta_1},z_{c+1}^{\delta_1},\ldots, z_{kc+1}^{\delta_1})\times {{\rm Span}}(z_2^{\delta_2},z_{c+2}^{\delta_2},\ldots, z_{kc+2}^{\delta_2})\times \cdots\\
	\times {{\rm Span}}(t_0z_{i}^{\delta_i}+t_1z_{0}^{\delta_i},z_{c+i}^{\delta_i},\ldots,z_{kc+i}^{\delta_i})\times \cdots \times{{\rm Span}}(z_{c}^{\delta_{c}},z_{2c}^{\delta_{c}},\ldots,z_{(k+1)c}^{\delta_{c}})
\end{eqnarray*}
for $[t_0,t_1]\in \pb^1$. It is easy to verify that  $\ls(\mathbf{a})\cdot C_i=a_i$ for each $i$. 
Consider the hyperplane $D_i\in |\oc_{\pb^n}(1)|$ given by  $\{[z_0,\ldots,z_n]\mid   z_{i}+z_0=0\}$. Then we have the similar result   as \cref{claim:non-vanishing}.
\begin{lem}
Suppose that $n=k(c+1)$.  	For any hyperplane $D\in |\oc_{\pb^n}(1) |$,   $p_{*}q^*D\in | \ls(\mathbf{b}) |$, where $\mathbf{b}:=(b_1,\ldots,b_c)\in \mathbb{N}^c$ with $b_i:=\frac{\prod_{j=1}^{c}\delta_j^{k+1}}{\delta_i}$.
\end{lem}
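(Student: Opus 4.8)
The plan is to identify the class of the effective divisor $p_*q^*D$ by pairing it against the test curves $C_1,\dots,C_c$. Since $\gf$ is a product of Grassmannians, $\ls\colon\zbb^c\to{\rm Pic}(\gf)$ is an isomorphism, and because $\ls(\mathbf a)\cdot C_i=a_i$ the classes $[C_1],\dots,[C_c]$ form the dual basis for this pairing; hence a line bundle class $\xi$ on $\gf$ is $\ls(\mathbf b)$ precisely when $\xi\cdot C_i=b_i$ for all $i$. Furthermore $q^*D$ is linearly equivalent to $q^*D_i$ for every $i$ (all hyperplanes in $\pb^n$ are linearly equivalent), and proper pushforward both preserves rational equivalence and carries effective cycles to effective cycles; therefore $p_*q^*D$ is an effective divisor whose class equals $\ls(\mathbf b)$ — hence lies in the complete linear system $|\ls(\mathbf b)|$ — as soon as one shows $C_i\cdot p_*q^*D_i=b_i$ for each $i=1,\dots,c$.

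To compute $C_i\cdot p_*q^*D_i$ I would apply the projection formula (\cref{projection formula}) to the generically finite surjective morphism $p\colon\ys\to\gf$ between the smooth projective varieties $\ys$ (smooth by the product version of \cref{smooth grassmannian}) and $\gf$, with $y=C_i$ and $x=q^*D_i$; this gives $C_i\cdot p_*q^*D_i=\deg\bigl(p^*C_i\cdot q^*D_i\bigr)$, valid once $p^{-1}(C_i)$ is of pure dimension one — so that $p^*C_i=[p^{-1}(C_i)]$ — and meets $q^*D_i$ in dimension zero. Both facts follow from the explicit shape of the $C_i$: writing $r(m)\in\{1,\dots,c\}$ for the representative of $m$ modulo $c$ and $S_i:=\{1,\dots,c(k+1)\}\setminus\{i\}$, the fibre of $p$ over $\Delta_i([t_0,t_1])\in C_i$ is the subscheme of $\pb^n$ cut out by $\{z_m^{\delta_{r(m)}}=0:m\in S_i\}$ together with $t_0z_i^{\delta_i}+t_1z_0^{\delta_i}=0$; since the dimension hypothesis of the lemma reads $n=c(k+1)$, the indices in $S_i$ are exactly those of all homogeneous coordinates of $\pb^n$ other than $z_0,z_i$, so this fibre is finite and $p^{-1}(C_i)$ is a curve whose reduced support maps under $q$ isomorphically onto the line $\ell:=\{z_m=0:m\neq 0,i\}$.

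The core computation is then to evaluate $\deg(p^*C_i\cdot q^*D_i)$ as an intersection multiplicity at a single point. Imposing the equation $z_i+z_0=0$ of $D_i$ on $p^{-1}(C_i)$ forces $z_0,z_i\neq 0$ and $t_0(-1)^{\delta_i}+t_1=0$, so there is exactly one intersection point $p_0$, with $[z_0:z_i]=[1:-1]$; in particular $\ell\not\subset D_i$, so the intersection is proper and its degree is the length of the local ring at $p_0$. On the chart $z_0=t_0=1$, with local coordinates $(t_1,z_1,\dots,z_n)$ near $p_0$, the ideal of $p^{-1}(C_i)$ is $\bigl(z_m^{\delta_{r(m)}}:m\in S_i\bigr)+\bigl(z_i^{\delta_i}+t_1\bigr)$; adjoining the generator $z_i+1$ of $q^*D_i$ and eliminating the coordinates $t_1$ and $z_i$ (which the generators $z_i^{\delta_i}+t_1$ and $z_i+1$, up to higher-order terms, make into regular transverse parameters, using again $\{1,\dots,n\}\setminus\{i\}=S_i$) identifies the resulting Artinian local ring with $\bigotimes_{m\in S_i}\cb[z_m]/(z_m^{\delta_{r(m)}})$. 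Its length is $\prod_{m\in S_i}\delta_{r(m)}=\bigl(\prod_{r\neq i}\delta_r^{k+1}\bigr)\delta_i^{k}=\Bigl(\prod_{j=1}^c\delta_j^{k+1}\Bigr)/\delta_i=b_i$, because each residue $r\neq i$ occurs $k+1$ times in $S_i$ while the residue $i$ occurs $k$ times. Hence $C_i\cdot p_*q^*D_i=b_i$ for all $i$, and combined with the first paragraph this yields $p_*q^*D\in|\ls(\mathbf b)|$.

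The delicate step is the last paragraph: one must handle correctly the non-reducedness of the fibres of $p$ introduced by the powers $z_m^{\delta_{r(m)}}$ in the spanning sets of the $\Delta_j$, verify that after discarding the $t_1$- and $z_i$-directions the remaining Artinian ring is exactly the tensor product indexed by $S_i$ (which is where the identity $n=c(k+1)$ is genuinely needed), and rule out any further intersection points or excess-dimensional components of $p^{-1}(C_i)$; the rest is bookkeeping.
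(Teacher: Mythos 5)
Your proposal is correct and follows essentially the same route as the paper: reduce to the intersection numbers $C_i\cdot p_*q^*D_i$ via the isomorphism $\ls:\zbb^c\to{\rm Pic}(\gf)$ and $\ls(\af)\cdot C_i=a_i$, then apply \cref{projection formula} and check that $p^*C_i$ and $q^*D_i$ meet in a single point with multiplicity $b_i$ (a step the paper only asserts, by analogy with \cref{claim:non-vanishing}, and which your local length computation carries out correctly, including the count that residue $i$ occurs $k$ times and each other residue $k+1$ times). You also correctly read the hypothesis as $n=c(k+1)$, which is what the construction of the curves $C_i$ requires.
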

\begin{proof}
	It is easy to show that $p^*C_i$ and $q^*D_i$ intersect only at one point with multiplicity $b_i$ for each $i=1,\ldots,c$. By the projection formula in  \cref{projection formula},  one has
	\begin{align}\label{eq:multiplicity}
		(p_{*}q^*D_i)\cdot C_i=p_*(q^*D_i\cdot p^*C_i)=b_i.
	\end{align}
Recall that  $
	\ls(\af)\cdot C_i=a_i
	$   	for any $\af\in \zbb^c$. Then $p_{*}q^*D\in | \ls(\mathbf{b}) |$  by \eqref{eq:multiplicity}. 
\end{proof}
By similar arguments as \cref{claim:base control}, $ \ls(\mathbf{b})\boxtimes \oc_{\pb^n}(-1)_{\upharpoonright \ys}$ is effective, and its base locus
\begin{eqnarray}\label{locus debarre}
{\rm Bs}\big( \ls(\mathbf{b})\boxtimes  \oc_{\pb^n}(-1)_{\upharpoonright \ys}\big)\subset p^{-1}(G^{\infty}),
\end{eqnarray}
where $G^{\infty}$ is the set of points in $\mathbf{G}$ at which  the  fiber in $\ys$ is positive dimensional. 
  We can   apply the same methods in proving \cref{weak nakamaye} to obtain a more general result. 
\begin{thm}\label{effective nakamaye2}
	Let $\ys$ be the  universal complete intersection defined by 
	$$
	\ys:=\big\{(\Delta_1,\ldots,\Delta_c,[z])\in \prod_{i=1}^{c}{\rm Gr}_{k+1}(V_{\delta_i})\times \pb^n \mid  \Delta_i([z])=0, \forall\ i=1,\ldots,c\big\},
	$$
	where $V_{\delta_i}:=H^0\big(\pb^n,\oc_{\pb^n}(\delta_i)\big)$, and $(k+1)c\geqslant n$. For any $J\subset \{0,\ldots,n\}$,  define  $\ys_J:= \ys\cap \prod_{i=1}^{c}{\rm Gr}_{k+1}(V_{\delta_i})\times \pb_J $.   Then for  any $\af=(a_1,\ldots,a_c)\in \mathbb{N}^c$ with $a_i\geqslant \frac{\prod_{j=1}^{c}\delta_j^{k+1}}{\delta_i}$ for   $i=1,\ldots,c$,  the base locus
	$$
	{\rm Bs}\big( \ls(\mathbf{a})\boxtimes  \oc_{\pb^n}(-1)_{\upharpoonright \ys_J}\big)\subset p_J^{-1}(G_J^{\infty}),
	$$
	where $G_J^{\infty}$ is the set of points in $ \prod_{i=1}^{c}{\rm Gr}_{k+1}(V_{\delta_i})$ at which  the  fiber in $\ys_J$  is  positive dimensional.
\end{thm}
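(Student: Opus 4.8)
The plan is to transcribe, essentially verbatim, the constructive proof of \cref{weak nakamaye} to the present product setting, the single Grassmannian being replaced throughout by $\gf=\prod_{i=1}^{c}{\rm Gr}_{k+1}(V_{\delta_i})$. First one reduces to the \emph{critical case} $n=(k+1)c$, in which $p$ is generically finite to one, exactly as in the passage from $k+1>N$ to $k+1=N$ in the proof of \cref{weak nakamaye}: replace $\pb^n$ by $\pb^{n'}$ with $n':=(k+1)c\geqslant n$, each $V_{\delta_i}$ by $\tilde{V}_{\delta_i}:=H^0\big(\pb^{n'},\oc_{\pb^{n'}}(\delta_i)\big)$, and $J$ by $\tilde{J}:=J\cup\{n+1,\ldots,n'\}$, so that the linear subspace $\pb_{\tilde{J}}\subset\pb^{n'}$ is identified with the original $\pb_J$, while $\gf$ embeds in $\tilde{\gf}:=\prod_i{\rm Gr}_{k+1}(\tilde{V}_{\delta_i})$ and $\ys_J$ in $\tilde{\ys}_{\tilde{J}}$. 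Under these inclusions $G_J^{\infty}=\tilde{G}_{\tilde{J}}^{\infty}\cap\gf$, the line bundle $\tilde{\ls}(\af)$ on $\tilde{\gf}$ restricts to $\ls(\af)$, and $\oc_{\pb^n}(-1)$ is the restriction of $\oc_{\pb^{n'}}(-1)$; since the $\delta_i$ are unchanged, the bound $a_i\geqslant b_i:=\prod_j\delta_j^{k+1}/\delta_i$ is preserved, and restricting base loci reduces the theorem to the case $n=(k+1)c$. Furthermore, since $a_i\geqslant b_i$, the pullback to $\ys_J$ of $\ls(\af-\mathbf{b})$ is globally generated, so ${\rm Bs}\big(\ls(\af)\boxtimes\oc_{\pb^n}(-1)_{\upharpoonright\ys_J}\big)\subseteq{\rm Bs}\big(\ls(\mathbf{b})\boxtimes\oc_{\pb^n}(-1)_{\upharpoonright\ys_J}\big)$, and it remains only to treat $\af=\mathbf{b}$ with $n=(k+1)c$.

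For $J=\varnothing$ this is \eqref{locus debarre}, so assume $J\neq\varnothing$; after a linear change of coordinates we may take $J=\{n'+1,\ldots,n\}$. Fix $\Delta_0=(\Delta_{0,1},\ldots,\Delta_{0,c})\in p_J(\ys_J)\setminus G_J^{\infty}$; then $q_J\big(p_J^{-1}(\Delta_0)\big)=\bigcap_i{\rm Int}(\Delta_{0,i})\cap\pb_J$ is finite, so we may choose a hyperplane $D=(z_{n'}=0)$ with $\bigcap_i{\rm Int}(\Delta_{0,i})\cap D\cap\pb_J=\varnothing$. For each $i$ decompose $V_{\delta_i}=V_{\delta_i,1}\oplus V_{\delta_i,2}$, with $V_{\delta_i,1}$ spanned by the degree-$\delta_i$ monomials in $z_0,\ldots,z_{n'-1}$, let $G_i\subset GL(V_{\delta_i})$ be the associated group of lower-triangular matrices as in \eqref{matrix}, and set $G:=\prod_i G_i$, acting factorwise on $\gf$. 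With $H:=p_*(q^*D)\in|\ls(\mathbf{b})|$ (the class of $H$ being computed in the preceding lemma) and $F:=p_J\big(q_J^{-1}(D\cap\pb_J)\big)$, the proof of \cref{action}, run in each factor simultaneously, shows: first, $F\subseteq g(H)$ for every $g\in G$, because each $g_i\in G_i$ fixes the $V_{\delta_i,1}$-component of every vector while membership in $F$ constrains only those components; and second, there is a $g_0\in G$ with $\Delta_0\notin g_0(H)$, obtained by constructing factor by factor an auxiliary point $\Delta'=(\Delta_1',\ldots,\Delta_c')$ with $\bigcap_i{\rm Int}(\Delta_i')\cap D=\varnothing$ (so $\Delta'\notin H$) together with $g_{0,i}\in G_i$ carrying $\Delta_i'$ to $\Delta_{0,i}$. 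Since each $GL(V_{\delta_i})$ is connected, $g_0(H)$ again lies in $|\ls(\mathbf{b})|$, so, exactly as in \eqref{eq:effective},
\[
p_J^*\big(g_0(H)\big)-q_J^*(D\cap\pb_J)\in\big|\ls(\mathbf{b})\boxtimes\oc_{\pb^n}(-1)_{\upharpoonright\ys_J}\big|
\]
is an effective divisor disjoint from the finite set $p_J^{-1}(\Delta_0)$. As $\Delta_0$ was an arbitrary point of $p_J(\ys_J)\setminus G_J^{\infty}$, this yields ${\rm Bs}\big(\ls(\mathbf{b})\boxtimes\oc_{\pb^n}(-1)_{\upharpoonright\ys_J}\big)\subseteq p_J^{-1}(G_J^{\infty})$, which completes the proof.

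The main obstacle is the construction of $g_0$ in the second point above: one must insert exactly the right monomials in $z_{n'},\ldots,z_n$ into each factor $\Delta_i'$ so that the \emph{joint} intersection $\bigcap_i{\rm Int}(\Delta_i')$ avoids the hyperplane $D=(z_{n'}=0)$, while keeping the $V_{\delta_i,1}$-parts of the chosen generators of $\Delta_i'$ prescribed — this is the analogue of the relations $u_{m+1}=\cdots=u_N=0$ in the proof of \cref{action}, now distributed over the $c$ factors so as to reduce to the emptiness $\bigcap_i{\rm Int}(\Delta_{0,i})\cap D\cap\pb_J=\varnothing$ on $\pb^{n'-1}$. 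Everything else is a routine transcription of the single-Grassmannian argument and uses no input beyond \eqref{locus debarre}, \cref{projection formula}, and the connectedness of the general linear group.
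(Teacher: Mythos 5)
Your proposal follows exactly the route the paper itself intends: the paper's treatment of this statement consists of the class computation $p_{*}q^{*}D\in|\ls(\mathbf{b})|$ via the curves $C_i$ and \cref{projection formula}, the observation \eqref{locus debarre}, and the instruction to repeat the proof of \cref{weak nakamaye}. Your reduction to the critical case $n=(k+1)c$ by enlarging $\pb^n$ (which preserves the bound $a_i\geqslant b_i$ since the $\delta_j$ are unchanged), the reduction from $\af$ to $\mathbf{b}$ via global generation of $\ls(\af-\mathbf{b})$, and the product versions of \cref{claim:base control} and of the first half of \cref{action} (the $G$-invariance of $F$, the linear equivalence $g_0(H)\in|\ls(\mathbf{b})|$ by connectedness, the effectivity as in \eqref{eq:effective}) are faithful and correct transcriptions, modulo a harmless notational clash in your use of $n'$ both for $(k+1)c$ and later for $n-|J|$.

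The one genuine gap is the step you yourself call the main obstacle and then leave unexecuted: the product analogue of the second half of \cref{action}, namely the construction of $\Delta'=(\Delta_1',\ldots,\Delta_c')$ with $\bigcap_i{\rm Int}(\Delta_i')\cap D=\varnothing$ together with $g_{0,i}\in G_i$ sending $\Delta_i'$ to $\Delta_{0,i}$. This is precisely where the argument is not a verbatim copy of the single-Grassmannian case. Writing $\nu:=\dim\pb_J$, only the $n-\nu$ coordinates $z_j$ with $j\in J$ are available for the pure powers $z_j^{\delta_i}$, and at a putative common zero on $D$ these must force \emph{all} of them to vanish; hence the generators kept clean (those with $\tilde t_{i,p}=u_{i,p}$) may number at most $\nu$ in total, with at most ${\rm rank}\big({\rm pr}_{V_{\delta_i,1}}|_{\Delta_{0,i}}\big)$ taken from the $i$-th factor, and their $V_{\cdot,1}$-parts must \emph{already} have empty common zero locus on $D\cap\pb_J\simeq\pb^{\nu-1}$; the remaining $\geqslant n-\nu$ generators must then be assigned pure powers so that each factor's monomials stay linearly independent (e.g.\ bijectively onto $J$ when exactly $\nu$ clean generators are chosen), after which $A_i$ and $B_i$ are built factorwise as in \cref{action}. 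The existence of such a cross-factor selection is a short but genuine general-position statement about the $c$ restricted systems $\Delta_{0,i}|_{D\cap\pb_J}$, and it is the only place where the hypotheses (finiteness of $p_J^{-1}(\Delta_0)$, generality of $D$, emptiness of $\bigcap_i{\rm Int}(\Delta_{0,i})\cap D\cap\pb_J$) must actually be exploited; it does not follow formally from the single-factor computation. To be fair, the paper is equally terse at this point, so your outline matches the paper's approach; but as a proof it is incomplete until this selection-and-assignment step is written out.
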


\begin{rem}
Very recently, Brotbek and the author  \cite{BD17,BD18} extended the techniques in \cite{BD15,Bro17} to the logarithmic settings using \emph{meromorphic connections}, and we proved
\begin{thmlist}
	\item the logarithmic analogue of the Debarre conjecture: for  general hypersurfaces $H_1,\ldots,H_n\in |\oc_{\pb^n}(d)|$ with $d\geqslant (4n)^n$ and $D:=\sum_{i=1}^{n}H_i$ simple normal crossing, the logarithmic cotangent bundle $\Omega_{\pb^n}(\log D)$ is \emph{almost ample}; 
	\item a result towards the \emph{orbifold Kobayashi conjecture} by Rousseau  \cite{Rou10}: for general hypersurfaces $H\in |\oc_{\pb^n}(d)|$ with $d\geqslant (n+1)^{n+3}\cdot (n+2)^{n+3}$, the \emph{Campana orbifold} $\big(\pb^n, (1-\frac{1}{d})H\big)$ is \emph{orbifold hyperbolic}. 
\end{thmlist}	
Let us mention that we have to apply \cref{effective nakamaye2}    to obtain the effective   lower bounds of degrees in \cite{BD17,BD18}.
\end{rem}

\section{On the Diverio-Trapani Conjecture}\label{proof of main}
In this section, we  apply  the techniques in \cite{BD15,Bro17} to prove \cref{main}.  Let us mention that \cref{sec:Fermat} is not self-contained, and we strongly recommend the readers who are interested in further details to refer to the paper \cite{Bro17}.  \noindent  
\subsection{Families of  Fermat-type Hypersurfaces}\label{sec:Fermat}
In \cite{Bro17}, Brotbek introduced the families of \emph{Fermat-type Hypersurfaces} as a  candidate for the examples satisfying a strong Zariski open property for hyperbolicity. In this subsection, we briefly recall his constructions  and the essential techniques  in \cite{Bro17} which will be used in the proof  of \cref{main}.

Let $X$ be an $n$-dimensional projective manifold endowed with a very ample line bundle $A$. 
 We fix $n+1$ sections in general position $\tau_0,\ldots,\tau_n\in H^0(X,A)$. 
Let us fix a positive integer $r$ and $k$.  For any $\ep,\delta\in \mathbb{N}$, set $V_{\delta}:=H^0\big(\pb^n,\oc_{\pb^n}(\delta)\big)$, and $\ab_{\ep,\delta}:=H^0(X,A^\ep)\otimes V_\delta$. Consider for any $\af:=\big(a_I\in H^0(X,A^\ep)\big)_{|I|=\delta}\in \ab_{\ep,\delta}$,   the hypersurface  $H_{\af}$ in $X$   defined by the zero locus of the  section 
\begin{eqnarray}\label{eq:Fermat section}
\sigma(\af):=\sum_{|I|=\delta}a_{I}\tau^{(r+k)I}\in H^0(X,A^{m})
\end{eqnarray}
where $m=\varepsilon+(r+k)\delta$ and $\tau^{(r+k)I}:=(\tau_0^{i_0}\cdots\tau_n^{i_n})^{r+k}$ for \(I=(i_0,\dots,i_n)\).  Consider the universal family 
$$
\hs_{\ep,\delta}:=\{(\af,x)\in  \ab_{\ep,\delta}\times X\mid \sigma(\af)(x)=0  \}.
$$
There exists  a Zariski open set of $\ab_{\ep,\delta}^{\rm sm}\subset\ab_{\ep,\delta}$ so that over $\ab_{\ep,\delta}^{\rm sm}$, $\hs_{\ep,\delta}$ is a smooth family. Let us also denote by $\hs_{\ep,\delta}\to \ab^{\rm sm}_{\ep,\delta}$ the restrict family, ${\hs}^{\rm rel}_{\ep,\delta,k}$ the (relative) Demailly-Semple $k$-jet  tower of $(\hs_{\ep,\delta},T_{\hs_{\ep,\delta}/\ab^{\rm sm}_{\ep,\delta}})$, and $\hat{\hs}^{\rm rel}_{\ep,\delta,k}$ the  blow-up of ${\hs}^{\rm rel}_{\ep,\delta,k}$ defined in \cref{fonctorial}.

Let us define a finite set $\Sigma:=\cup_{\{j_1,\ldots,j_n\}\subset \{0,\ldots,n\}}(\tau_{j_1}=\cdots=\tau_{j_n}=0) $ of $X$, and write $X^{\circ}:=X\setminus \Sigma$. Denote by $\hat{X}_k^\circ:=(\pi_{0,k}\circ \nu_k)^{-1}(X^\circ)$. We can shrink $\ab_{\ep,\delta}^{\rm sm}$ to a Zariski open set so that  $\hs_{\ep,\delta}\subset \ab_{\ep,\delta}^{\rm sm}\times  X^\circ$  and, \emph{a fortiori},  $\hat{\hs}^{\rm rel}_{\ep,\delta,k}\subset \ab_{\ep,\delta}^{\rm sm}\times  \hat{X}_k^\circ$. 

	We need to cover $X$ by a natural stratification induced by the vanishing of the $\tau_j$'s. For any $J\subset \{0,\ldots,n\}$, define 
\begin{align*}
X_J&:=\{x\in X \mid \tau_j(x)=0\Leftrightarrow j\in J   \},\\
\pb_J&:=\{[z]\in \pb^n\mid z_j=0 \mbox{ if }j\in J   \},\\
V_{\delta,J}&:=H^0\big(\pb_J,\oc_{\pb_J}(\delta) \big),
\end{align*}
$\hat{X}_{k,J}:=(\pi_{0,k}\circ\nu_k)^{-1}(X_J)$ and $\hat{X}_{k,J}^{\circ}:=\hat{X}_{k,J}\cap \hat{X}_k^{\circ}$. 
 
 We are now in position to recall the main results in \cite{Bro17}, which will be applied in \cref{sec:intersection}.
\begin{thm}[Brotbek]\label{thm:Brotbek}
Fix any $r\in \mathbb{N}$. For each $\ep,\delta\in \mathbb{N}$,  there exists a rational map
	\begin{align}\label{eq:rational map}
\Phi_{\ep,\delta}:  \ab_{\ep,\delta}\times  \hat{X}_k\dashrightarrow {\rm Gr}_{k+1}(V_{\delta})
	\end{align}
	induced by Brotbek's Wronskians.  Suppose that $\ep\geqslant m_{\infty}(X_k,A)$  and $\delta\geqslant n(k+1)$.
	\begin{thmlist}\item 
There exists a non-empty Zariski open subset $\ab_{\ep,\delta}^{\circ}\subset \ab^{\rm sm}_{\ep,\delta}$ so that the restriction of $\Phi_{\ep,\delta}$ to $\ab_{\ep,\delta}^{\circ}\times \hat{X}_k^{\circ}$ is a regular morphism.
	\item  	Set $\ls$ to be the tautological line bundle on $ {\rm Gr}_{k+1}(V_{\delta})$, and $F$ to be the effective divisor in $\hat{X}_k$ defined by $\oc_{\hat{X}_k}(-F):=\nu_k^*\wk_{\infty}(X_k)$. One has 
	\begin{align}
	\Phi_{\ep,\delta}^*\ls=\nu_k^*\big(\oc_{X_k}(k')\otimes \pi_{0,k}^*A^{(k+1)(\ep+k\delta)}\big)\otimes \oc_{\hat{X}_k}(-F).
	\end{align}

	\item Define a rational map 
	\begin{align*}
	\Psi_{\ep,\delta}:  \ab_{\ep,\delta}\times  \hat{X}_k &\dashrightarrow {\rm Gr}_{k+1}(V_{\delta})\times \pb^n\\
	(\af,w)  &\mapsto \big(\Phi_{\ep,\delta}(\af,w), [\tau^r(w)] \big),
	\end{align*}
	where $[\tau^r(w)]:=\big[\tau_0^r\big(\pi_{0,k}\circ \nu_k(w)\big),\ldots,\tau_n^r\big(\pi_{0,k}\circ \nu_k(w) \big)\big]$. The restriction of  $\Psi_{\ep,\delta}$   to $\hat{\hs}_{\ep,\delta,k}^{{\rm rel}}$ factors through $\ys$, where $\ys\subset {\rm Gr}_{k+1}(V_{\delta})\times \pb^n$ is the universal family of complete intersections defined in \eqref{universal}.  In other words, for any $\af\in \ab_{\ep,\delta}^{\circ}$, $\hat{H}_{\af,k}\subset \hat{X}_k^\circ$  and $\Psi_{\ep,\delta}(\hat{H}_{\af,k})\subset \ys$.  
	\item \label{eq:linear} For any $w\in \hat{X}_k^\circ$, there exists a $\cb$-linear map
	\begin{align}
	\varphi_{\ep,\delta,w}:\ab_{\ep,\delta}\to V_{\delta}^{k+1}
	\end{align}
	such that $\Phi_{\ep,\delta}$ is defined at $(\af,w)\in \ab_{\ep,\delta}\times \hat{X}_k^\circ$ if and only if 
	${\rm dim}\, [\varphi_{\ep,\delta,w}(\af)]=k+1$. Here $[\varphi_{\ep,\delta,w}(\af)]$  denotes to be the subspace in $V_\delta$ spanned by $(k+1)$-vectors $\varphi_{\ep,\delta,w}(\af)$.  Moreover, for any $\af\in \ab_{\ep,\delta}^\circ$, 
	$\Phi_{\ep,\delta}(\af,w)=   [\varphi_{\ep,\delta,w}(\af)]\in {\rm Gr}_{k+1}(V_\delta)$. 
	\item \label{rank} Same setting as  above.  For the (unique) $J\subset \{0,\ldots,n\}$ so that $w\in \hat{X}_{k,J}^\circ$,   the composition of $\cb$-linear maps
	$$
\phi_{\ep,\delta,w}:	\ab_{\ep,\delta}\xrightarrow{\varphi_{\ep,\delta,w}}V_{\delta}^{k+1}\xrightarrow{\rho_w} V_{\delta,J}^{k+1},
	$$
is surjective. Here $\rho_w:V_{\delta}^{k+1}\to   V_{\delta,J}^{k+1}$ is the projection map. 
	\end{thmlist}
	\end{thm}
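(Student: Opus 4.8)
The plan to establish \cref{thm:Brotbek} is to derive all five items from the Wronskian operator $\omega(\cdot)$ of \eqref{inverse wronskian}, applied relatively over the parameter space $\ab_{\ep,\delta}$. The first observation I would use is that the Fermat section $\sigma(\af)=\sum_{|I|=\delta}a_I\tau^{(r+k)I}$ of \eqref{eq:Fermat section} is the evaluation at $z=(\tau_0^r,\dots,\tau_n^r)$ of the degree-$\delta$ polynomial $\sum_{|I|=\delta}(a_I\tau^{kI})z^I$, whose coefficients $a_I\tau^{kI}$ lie in $H^0(X,A^{\ep+k\delta})$. Over the open set $\hat{X}_k^\circ$ lying above $X^\circ=X\setminus\Sigma$, I would evaluate the $k+1$ successive derivatives $D^0,\dots,D^k$ of this polynomial along a jet $w$ (acting on the $X$-coefficients, after trivialising $A^{\ep+k\delta}$ near the base point of $w$), obtaining $k+1$ vectors $\varphi_{\ep,\delta,w}(\af)=(v_0,\dots,v_k)\in V_\delta^{k+1}$ that depend $\cb$-linearly on $\af$: this is the linear map of item (iv), with $\Phi_{\ep,\delta}(\af,w)$ equal to their span, which is defined exactly when $v_0,\dots,v_k$ are linearly independent. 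Since the Wronskian $\omega$ is the determinant of this $(k+1)$-tuple, it is a section of $\oc_{X_k}(k')\otimes\pi_{0,k}^*A^{(k+1)(\ep+k\delta)}$; dividing out the divisor $F$ with $\oc_{\hat{X}_k}(-F)=\nu_k^*\wk_\infty(X_k)$ then yields the twist $\Phi_{\ep,\delta}^*\ls$ of item (ii). The substitution $z=(\tau_0^r,\dots,\tau_n^r)$ is precisely what defines $\Psi_{\ep,\delta}$, and since $\sigma(\af)$ and hence every $D^l\sigma(\af)$ vanishes along $\hat{H}_{\af,k}$, the image $\Psi_{\ep,\delta}(\hat{H}_{\af,k})$ lands in $\ys$, which is item (iii).

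The one genuinely new input is item (v), the surjectivity of $\phi_{\ep,\delta,w}=\rho_w\circ\varphi_{\ep,\delta,w}:\ab_{\ep,\delta}\to V_{\delta,J}^{k+1}$ for $w\in\hat{X}_{k,J}^\circ$, which I would prove by a block decomposition. Fix $w$ over $x\in X_J$, so that $\tau_j(x)=0$ precisely when $j\in J$; the projection $\rho_w$ is, componentwise, the restriction $V_\delta\to V_{\delta,J}$, which kills every monomial $z^I$ with $i_j>0$ for some $j\in J$. Hence after applying $\rho_w$ only the components $a_I$ with $I$ supported on $\{0,\dots,n\}\setminus J$ matter, and for such $I$ the monomial $\tau^{kI}$ is non-vanishing near $x$. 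Expanding $D^l(a_I\tau^{kI})(w)$ via the Leibniz rule, together with the elementary fact that a derivative along $w$ of the non-vanishing monomial $\tau^{kI}$ equals that monomial times a universal polynomial in $I$ and in the jet, I would check that the coefficient of $z^I$ in $v_l$ is a $\cb$-linear combination of $(a_I\circ f)^{(0)}(0),\dots,(a_I\circ f)^{(l)}(0)$ (with $f$ the curve underlying $w$) whose coefficient of $(a_I\circ f)^{(l)}(0)$ is $\tau^{kI}(x)\neq0$; thus the map from the jet $\big((a_I\circ f)^{(l)}(0)\big)_{0\leqslant l\leqslant k}\in\cb^{k+1}$ to these $k+1$ coefficients is lower triangular with non-zero diagonal, hence invertible. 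Since $w$ is a regular jet ($f'(0)\neq0$) and $A^\ep$ separates $k$-jets on $X$---which holds because $\ep\geqslant m_\infty(X_k,A)=k$ by \cref{wronskian result} and $A^k$ separates $k$-jets---I can choose each $a_I$ realising an arbitrary such jet, independently over the finitely many $I$ supported off $J$; this exhibits $\phi_{\ep,\delta,w}$ as block diagonal with surjective blocks, hence surjective onto $V_{\delta,J}^{k+1}$.

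With items (ii)--(v) at hand, item (i) follows by a standard incidence-variety argument. The locus where $\Phi_{\ep,\delta}$ fails to be regular is Zariski closed in $\ab_{\ep,\delta}\times\hat{X}_k^\circ$, and over each $w\in\hat{X}_{k,J}^\circ$ its fibre is the preimage, under the surjection $\phi_{\ep,\delta,w}$, of the degeneracy locus $\{(\bar v_0,\dots,\bar v_k):\dim{\rm Span}(\bar v_0,\dots,\bar v_k)<k+1\}\subset V_{\delta,J}^{k+1}$, hence has codimension $\dim V_{\delta,J}-k$ in $\ab_{\ep,\delta}$. On $X^\circ$ one has $|J|\leqslant n-1$, so $\dim V_{\delta,J}\geqslant\delta+1$, and this codimension is at least $\delta+1-k$, which exceeds $\dim\hat{X}_k=n+k(n-1)$ exactly because $\delta\geqslant n(k+1)$. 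Therefore the bad locus has dimension smaller than $\dim\ab_{\ep,\delta}$, so it projects to a proper Zariski closed subset of $\ab_{\ep,\delta}$; intersecting the complement with $\ab^{\rm sm}_{\ep,\delta}$ gives the desired non-empty Zariski open $\ab_{\ep,\delta}^\circ$ over which $\Phi_{\ep,\delta}$ is regular on all of $\hat{X}_k^\circ$, and item (iii) already supplies the factorisation of $\Psi_{\ep,\delta}$ through $\ys$ there.

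The step I expect to be the real obstacle is not item (v)---which, once the shape of $\varphi_{\ep,\delta,w}$ is known, reduces to the soft triangularity argument above---but the honest verification of items (ii) and (iv): unwinding Brotbek's Wronskian determinant through the local model $U_k\simeq U\times\rc_{n,k}$ of the Demailly-Semple tower (as in \cref{brotbek wronskian}), confirming that the ``coefficient of $z^I$'' packaging of the jet-data really defines the $\oc$-module morphisms underlying $\varphi_{\ep,\delta,w}$ with the claimed triangular structure, and tracking the precise $A$- and $F$-twists so that the $\tau^r$-factor separates off cleanly into $\Psi_{\ep,\delta}$ and its restriction to $\hat{\hs}^{\rm rel}_{\ep,\delta,k}$ lands in $\ys$. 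This bookkeeping is the computational heart of \cite{Bro17}, and the remaining items follow from it as above.
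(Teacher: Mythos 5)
The first thing to say is that the paper itself contains no proof of \cref{thm:Brotbek}: it is recalled from \cite{Bro17} (the author explicitly warns that \cref{sec:Fermat} is not self-contained), so your proposal can only be measured against Brotbek's original construction. Your skeleton does follow that construction: a $\cb$-linear map $\varphi_{\ep,\delta,w}$ obtained by differentiating the Fermat-type section at most $k$ times, the Pl\"ucker/Wronskian bookkeeping giving the twist in (ii), the stratumwise triangularity giving the surjectivity in (v) (using $\ep\geqslant m_\infty(X_k,A)=k$ so that $A^{\ep}$ separates $k$-jets, and $\tau^{kI}(x)\neq 0$ for $I$ supported off $J$), and the codimension count $\dim V_{\delta,J}-k>\dim\hat{X}_k$, valid because $\delta\geqslant n(k+1)$, forcing generic regularity in (i). Those parts are essentially sound (in (i) the fibre of the indeterminacy locus over $w$ is only \emph{contained} in the $\phi_{\ep,\delta,w}$-preimage of the degeneracy locus in $V_{\delta,J}^{k+1}$, which is all the estimate needs).

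There is, however, a genuine gap in the way you define $\varphi_{\ep,\delta,w}$, and it breaks item (iii). You set $v_l=\sum_{|I|=\delta} D^l\big(a_I\tau^{kI}\big)(w)\,z^I$, i.e.\ you differentiate only the ``coefficients'' $a_I\tau^{kI}$ and leave the substituted variables $z^I$ (standing for $\tau^{rI}$) untouched, and you then conclude that $\Psi_{\ep,\delta}(\hat{H}_{\af,k})\subset\ys$ because $D^l\sigma(\af)$ vanishes along jets of curves in $H_{\af}$. But $v_l\big([\tau^r(x)]\big)\neq D^l\sigma(\af)(w)$ in general: by the Leibniz rule, $D^l\big(a_I\tau^{(r+k)I}\big)$ contains cross terms in which derivatives hit the remaining factor $\tau^{rI}$, and already for $l=1$ these terms do not vanish on $\hat{H}_{\af,k}$, so the span of your $v_l$'s need not annihilate $[\tau^r(w)]$. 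The whole point of the exponent $r+k$ in \eqref{eq:Fermat section}, combined with stopping at $k$ derivatives, is that \emph{every} Leibniz term of $D^l\big(a_I\tau^{(r+k)I}\big)$ with $l\leqslant k$ is still divisible by $\tau^{rI}$ (componentwise, $(r+k)i_i-k\geqslant r i_i$ whenever $i_i\geqslant 1$); writing $D^l\big(a_I\tau^{(r+k)I}\big)=\tau^{rI}\,g_{I,l}$ with $g_{I,l}$ a jet differential of order $l$ and weight $l$ valued in $A^{\ep+k\delta}$, and setting $v_l:=\sum_I g_{I,l}(w)\,z^I$, one gets $v_l\big([\tau^r(x)]\big)=D^l\sigma(\af)(w)$ exactly, which is what makes the factorisation through $\ys$ in \eqref{universal} true. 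Your triangularity argument for (v) survives this correction, since the term of $g_{I,l}$ involving the top-order derivative of $a_I$ is still $D^l(a_I)\,\tau^{kI}$, and the twist in (ii) comes out as stated; but you must also invoke the identification (from \cref{brotbek wronskian} and \cite{Bro17}) of the base ideal of these Pl\"ucker--Wronskian sections with $\wk_{\infty}(X_k)$ to see that blowing up along $F$ exactly accounts for the factor $\oc_{\hat{X}_k}(-F)$ and resolves the corresponding indeterminacy.
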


\subsection{Families of complete intersections of Fermat-type hypersurfaces}\label{sec:intersection}
Let us  construct    families  of complete intersection varieties in $X$ cut out by  Fermat-type hypersurfaces defined in \cref{sec:Fermat}.  As we will see in \cref{k-jet}, these examples satisfy the strong Zariski open property \eqref{eq:star} for almost $k$-jet ampleness defined in \cref{almost k ample}.  

We fix $1\leqslant c\leqslant n-1$, $r\in \mathbb{N}$, $k\geqslant \frac{n}{c}-1$, and two $c$-tuples of positive integers $\bm{\ep}=(\ep_1,\ldots,\ep_c),\bm{\delta}=(\delta_1,\ldots,\delta_c)\in \mathbb{N}^c$.  
Consider the family $ {\mathscr{Z}}\subset \ab_{\ep_1,\delta_1}\times\cdots\times \ab_{\ep_c,\delta_c}\times X$ of \emph{complete intersection varieties} in $X$ defined by 
\begin{align}\label{family}
 {\mathscr{Z}}:=\{(\af_1,\ldots,\af_c,x)\in \ab_{\ep_1,\delta_1}\times\cdots\times \ab_{\ep_c,\delta_c}\times X\mid   \sigma(\af_1)(x)=\cdots=\sigma(\af_c)(x)=0 \},
\end{align}
where $\sigma(\af_i)$ is the   section defined in \eqref{eq:Fermat section}.  Let us denote by $\rho:\zs\to \ab_{\ep_1,\delta_1}\times\cdots\times \ab_{\ep_c,\delta_c}$ the natural projection, and for any $\af:=(\af_1,\ldots,\af_c)$, set $Z_\af:=\rho^{-1}(\af)$. 
One can show that  there is a non-empty  Zariski open set $\ab_{\rm sm}\subset \ab:= \ab_{\ep_1,\delta_1}\times\cdots\times \ab_{\ep_c,\delta_c}$ so that $Z_\af$ is smooth for any $\af\in \ab_{\rm sm}$. In other words, for any $\af\in \ab_{\rm sm}$, the $c$-hypersurfaces  $H_{\af_1},\ldots,H_{\af_c}$ are smooth and intersect transversely so that $Z_{\af}:=H_{\af_1}\cap \ldots\cap H_{\af_c}$ is a smooth subvariety in $X$ of codimension $c$.  Let us also denote by $\zs\to \ab_{\rm sm}$ the restricted (smooth) family.  Denote by $\zs^{\rm rel}_k$ the relative  Demailly-Semple $k$-jet tower of $(\zs,T_{\zs/\ab_{\rm sm}})$, and $\hat{\zs}^{\rm rel}_k$ its blow-up defined in \cref{fonctorial}. Observe that $Z_{\af,k}=H_{\af_1,k}\cap\ldots\cap H_{\af_c,k}$ for any $\af\in \ab_{\rm sm}$, and by   \cref{fonctorial}, one has
\begin{align}\label{eq:intersection def}
\hat{Z}_{\af,k}\subset \hat{H}_{\af_1,k}\cap\ldots\cap \hat{H}_{\af_c,k}.
\end{align}  

Consider a rational map $\Phi:\ab\times \hat{X}_k\dashrightarrow {\rm Gr}_{k+1}(V_{\delta_1})\times \cdots\times  {\rm Gr}_{k+1}(V_{\delta_c})$ by taking the products of  \eqref{eq:rational map}. Precisely speaking, $\Phi$ is defined by
\begin{align*}
\Phi:\ab\times \hat{X}_k &\dashrightarrow {\rm Gr}_{k+1}(V_{\delta_1})\times \cdots\times  {\rm Gr}_{k+1}(V_{\delta_c})\\
(\af_1,\ldots,\af_c,w) & \mapsto \big(\Phi_{\ep_1,\delta_1}(\af_1,w),\ldots, \Phi_{\ep_c,\delta_c}(\af_c,w)  \big)
\end{align*}
Write $\gf:={\rm Gr}_{k+1}(V_{\delta_1})\times \cdots\times  {\rm Gr}_{k+1}(V_{\delta_c})$ for short. As a direct consequence of \cref{wronskian result,thm:Brotbek}, we have the following result.
\begin{thm}\label{thm:variant} Assume that   $\ep_i\geqslant k,\delta_i\geqslant n(k+1)$ for every $i=1,\ldots,c$. Then
		\begin{thmlist}
			\item     the restriction of $\Phi$ to $\ab^\circ_{\ep_1,\delta_1}\times \cdots\times \ab^\circ_{\ep_c,\delta_c}\times \hat{X}_k^\circ$ is regular. 
			\item Set $\ab^\circ:=\ab^\circ_{\ep_1,\delta_1}\times \cdots\times \ab^\circ_{\ep_c,\delta_c}\cap \ab_{\rm sm}$. We also denote by   $\hat{\zs}_k^{\rm rel}\to \ab^\circ$ the restricted family. Then $\hat{\zs}_k^{\rm rel}\subset \ab^\circ\times \hat{X}_k^\circ$.
		  	\item  For any $ (b_1,\ldots,b_c)\in \mathbb{N}^c$,   one has
			\begin{align}\label{eq:pull-back}
			\Phi^*\ls(b_1,\ldots,b_c)=\nu_k^*\big(\oc_{X_k} (\sum_{i=1}^{c}b_ik')\otimes \pi_{0,k}^*A^{\sum_{i=1}^{c}b_i(k+1) (\ep_i+k\delta_i )}\big)\otimes \oc_{\hat{X}_k}\big(-(\sum_{i=1}^{c}b_i)F\big),
			\end{align}
			where $\ls(b_1,\ldots,b_c)$ is the tautological line bundle defined in \eqref{eq:group}.
 		\item \label{factor through}Define a rational map 
		\begin{align*}
		\Psi:  \ab \times  \hat{X}_k &\dashrightarrow {\rm Gr}_{k+1}(V_{\delta_1})\times \cdots\times  {\rm Gr}_{k+1}(V_{\delta_c})\times \pb^n\\
		(\af,w)  &\mapsto \big(\Phi(\af,w), [\tau^r(w)] \big),
		\end{align*}
		where $[\tau^r(w)]:=\big[\tau_0^r\big(\pi_{0,k}\circ \nu_k(w)\big),\ldots,\tau_n^r\big(\pi_{0,k}\circ \nu_k(w) \big)\big]$. The restriction of  $\Psi$   to $\hat{\zs}_{k}^{{\rm rel}}$ factors through $\ys$, where $\ys\subset {\rm Gr}_{k+1}(V_{\delta_1})\times \ldots\times  {\rm Gr}_{k+1}(V_{\delta_c})\times \pb^n$ is the universal family of complete intersections  defined in \eqref{eq:generalized universal}.  In other words, for any $\af\in \ab^{\circ}$, $\hat{Z}_{\af,k}\subset \hat{X}_k^\circ$  and $\Psi (\hat{Z}_{\af,k})\subset \ys$.  
	\end{thmlist}
\end{thm}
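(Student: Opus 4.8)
The plan is to deduce each of the four items directly from the corresponding assertion in \cref{thm:Brotbek}, applied to each of the $c$ Fermat-type families $\hs_{\ep_i,\delta_i}$ one index at a time. The first thing I would record is that the hypotheses match up: by \cref{wronskian result} one has $m_\infty(X_k,A)=k$, so the standing assumptions $\ep_i\geqslant k$ and $\delta_i\geqslant n(k+1)$ are exactly what is needed to invoke \cref{thm:Brotbek} for the $i$-th factor. Throughout, I would keep in mind that $\Phi$ and $\Psi$ are only rational maps, so that every pullback and every ``factors through'' statement is to be read on the dense open locus where $\Phi$ is a morphism.

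For item (i), I would note that the $i$-th component of $\Phi$ is $\Phi_{\ep_i,\delta_i}$ precomposed with the projection $\ab\times\hat{X}_k\to\ab_{\ep_i,\delta_i}\times\hat{X}_k$, which maps $\ab^\circ\times\hat{X}_k^\circ$ into $\ab^\circ_{\ep_i,\delta_i}\times\hat{X}_k^\circ$; since $\Phi_{\ep_i,\delta_i}$ is regular there by item (i) of \cref{thm:Brotbek}, every coordinate projection of $\Phi$ -- and hence $\Phi$ itself -- is a morphism on $\ab^\circ\times\hat{X}_k^\circ$. For item (ii), for $\af\in\ab^\circ$ each $\af_i$ lies in $\ab^\circ_{\ep_i,\delta_i}$, so $\hat{H}_{\af_i,k}\subset\hat{X}_k^\circ$ by item (iii) of \cref{thm:Brotbek}; combined with the inclusion $\hat{Z}_{\af,k}\subset\hat{H}_{\af_1,k}\cap\cdots\cap\hat{H}_{\af_c,k}$ from \eqref{eq:intersection def} this forces $\hat{Z}_{\af,k}\subset\hat{X}_k^\circ$, and since by item (iii) of \cref{fonctorial} the blow-up commutes with restriction to fibres, the fibre of $\hat{\zs}^{\rm rel}_k\to\ab^\circ$ over $\af$ is precisely $\hat{Z}_{\af,k}$, whence $\hat{\zs}^{\rm rel}_k\subset\ab^\circ\times\hat{X}_k^\circ$.

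For item (iii), I would write $\ls(b_1,\ldots,b_c)=\bigotimes_{i=1}^c\mathrm{pr}_i^*\oc_{{\rm Gr}_{k+1}(V_{\delta_i})}(b_i)$ with $\mathrm{pr}_i:\gf\to{\rm Gr}_{k+1}(V_{\delta_i})$ the projection, observe that $\mathrm{pr}_i\circ\Phi=\Phi_{\ep_i,\delta_i}$ up to the harmless projection on the $\ab$-factors, and conclude by functoriality that $\Phi^*\ls(b_1,\ldots,b_c)=\bigotimes_{i=1}^c\Phi_{\ep_i,\delta_i}^*\oc_{{\rm Gr}_{k+1}(V_{\delta_i})}(b_i)$; substituting the formula of item (ii) of \cref{thm:Brotbek} into each factor and collecting the tensor powers of $\oc_{X_k}(k')$, $\pi_{0,k}^*A$ and $\oc_{\hat{X}_k}(-F)$ then yields \eqref{eq:pull-back}. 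For item (iv), I would use that by item (iii) of \cref{thm:Brotbek}, for each $i$ the restriction of $\Psi_{\ep_i,\delta_i}$ to $\hat{\hs}^{\rm rel}_{\ep_i,\delta_i,k}$ factors through the universal complete intersection inside ${\rm Gr}_{k+1}(V_{\delta_i})\times\pb^n$, i.e. $\Phi_{\ep_i,\delta_i}(\af_i,w)$ vanishes at $[\tau^r(w)]$ whenever $\af_i\in\ab^\circ_{\ep_i,\delta_i}$ and $w\in\hat{H}_{\af_i,k}$. Then for $\af\in\ab^\circ$ and $w\in\hat{Z}_{\af,k}$, the inclusion \eqref{eq:intersection def} puts $w$ in every $\hat{H}_{\af_i,k}$, so $\Phi_{\ep_i,\delta_i}(\af_i,w)([\tau^r(w)])=0$ for all $i$, which is exactly the assertion that $\Psi(\af,w)=(\Phi(\af,w),[\tau^r(w)])$ satisfies the defining equations \eqref{eq:generalized universal} of $\ys$; combined with item (ii) this gives $\hat{Z}_{\af,k}\subset\hat{X}_k^\circ$ and $\Psi(\hat{Z}_{\af,k})\subset\ys$ for all $\af\in\ab^\circ$.

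I do not expect a serious obstacle: the whole argument is a ``product-and-restriction'' reduction to \cref{thm:Brotbek}. The only places that will require genuine attention are bookkeeping ones -- correctly invoking item (iii) of \cref{fonctorial} to pass from the fibrewise inclusions $\hat{Z}_{\af,k}\subset\hat{X}_k^\circ$ and $\Psi(\hat{Z}_{\af,k})\subset\ys$ to the corresponding inclusions for the total space $\hat{\zs}^{\rm rel}_k$, and keeping track of the fact that the line-bundle identities in item (iii), like those in \cref{thm:Brotbek}, are stated over the locus where $\Phi$ is regular and with the right-hand side pulled back along the second projection. No input beyond \cref{thm:Brotbek}, \cref{wronskian result}, \cref{fonctorial} and \eqref{eq:intersection def} should be needed.
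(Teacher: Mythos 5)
Your proposal is correct and follows essentially the same route as the paper: take $m_\infty(X_k,A)=k$ from \cref{wronskian result}, deduce (i)--(iii) componentwise from \cref{thm:Brotbek}, and prove (iv) by combining the inclusion \eqref{eq:intersection def} with the fact that every $P\in\Phi_{\ep_i,\delta_i}(\af_i,w)$ vanishes at $[\tau^r(w)]$, so that $\Psi(\af,w)$ satisfies the defining equations of $\ys$. Your write-up merely spells out the bookkeeping (projections, tensoring the pull-back formulas, the fibrewise identification via \cref{fonctorial}) that the paper leaves implicit.
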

\begin{proof}
We apply \cref{wronskian result} to take $m_{\infty}(X_k,A)=k$.   (\lowerromannumeral{1}), (\lowerromannumeral{2}) and (\lowerromannumeral{3}) can be easily derived from \cref{thm:Brotbek}.  To prove (\lowerromannumeral{4}), it is enough to show that for any $\af\in \ab^{\circ}$,  $\Psi (\hat{Z}_{\af,k})\subset \ys$. By \eqref{eq:intersection def}, for any $w\in \hat{Z}_{\af,k} $, $i=1,\ldots,c$ and   $P\in \Phi_{\ep_i,\delta_i}(\af_i,w)$, one has
$$
P\big([\tau^r(w)]\big)=0.
$$
This proves  (\lowerromannumeral{4}) by the definition of $\ys$. 
\end{proof}

 Set $\ys_J:=\ys\cap\big(\gf\times\pb_J\big)\subset \gf\times\pb^n$, and denote by $G^\infty_J$    the set of points in $\mathbf{G}$ at which the fiber in $\ys_J$ is positive dimensional.

Now we are ready to prove the following   lemma, which is a variant of \cite[Lemma 3.11]{Bro17}.
\begin{lem}[Avoiding positive dimensional fibers]\label{exceptional locus}
 Assume that $\ep_i\geqslant k$, $\delta_i\geqslant {\rm dim}\, \hat{X}_k=(n-1)(k+1)+1$ for  $i=1,\ldots,c$.   Then 	for any $J\subset \{0,\ldots,n\}$, there exists a non-empty Zariski open subset $\ab_J\subset \ab^{\circ}$ such that
	$$
	{\Phi}^{-1}(G_J^\infty)\cap (\ab_J\times \hat{X}_{k,J}^{\circ} )=\varnothing .
	$$
\end{lem}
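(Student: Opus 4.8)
The plan is to show that the incidence set
\[
B_J:=\bigl\{(\af,w)\in\ab^\circ\times\hat X_{k,J}^\circ\ :\ \Phi(\af,w)\in G_J^\infty\bigr\}
\]
(well defined since $\Phi$ is a regular morphism on $\ab^\circ\times\hat X_k^\circ$ by \cref{thm:variant}, and $\hat X_{k,J}^\circ\subseteq\hat X_k^\circ$) does not dominate $\ab^\circ$ under the first projection $\mathrm{pr}_\ab$; then $\ab_J:=\ab^\circ\setminus\overline{\mathrm{pr}_\ab(B_J)}$ is a non-empty Zariski open subset of the irreducible variety $\ab^\circ$ satisfying the conclusion. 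Since $\dim\overline{\mathrm{pr}_\ab(B_J)}\leqslant\dim B_J$, it suffices to prove $\dim B_J<\dim\ab$.

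To bound $\dim B_J$ I would project $B_J$ onto $\hat X_{k,J}^\circ$ and estimate the fibres. Fix $w\in\hat X_{k,J}^\circ$. By \cref{eq:linear} the point $\Phi(\af,w)=\bigl([\varphi_{\ep_i,\delta_i,w}(\af_i)]\bigr)_{i}\in\gf$ depends linearly on $\af$, and whether it lies in $G_J^\infty$ depends only on the restrictions to $\pb_J$ of the subspaces $[\varphi_{\ep_i,\delta_i,w}(\af_i)]$: indeed $\ys_J$ lies over $\pb_J$, so the fibre of $p_J$ over a point of $\gf$ — and hence the condition that it be positive-dimensional, which cuts out $G_J^\infty$ — is read off from the images of the subspaces in $V_{\delta_i,J}$. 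By \cref{rank} the resulting linear map
\[
\Lambda_w:=\bigl(\rho_w\circ\varphi_{\ep_i,\delta_i,w}\bigr)_{i}\ :\ \ab\longrightarrow W:=\textstyle\prod_{i=1}^{c}V_{\delta_i,J}^{\,k+1}
\]
is surjective. Thus, putting $\Xi_J:=\bigl\{(F_{i,j})_{i,j}\in W:\ \bigcap_{i,j}\{F_{i,j}=0\}\subseteq\pb_J\ \text{has a component of dimension}\geqslant 1\bigr\}$, the fibre of $B_J$ over $w$ equals $\ab^\circ\cap\Lambda_w^{-1}(\Xi_J)$, of dimension $\dim\ab-\operatorname{codim}(\Xi_J,W)$. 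Hence $\dim B_J\leqslant\dim\hat X_{k,J}^\circ+\dim\ab-\operatorname{codim}(\Xi_J,W)\leqslant\dim\hat X_k+\dim\ab-\operatorname{codim}(\Xi_J,W)$, using that $\hat X_{k,J}^\circ$ is locally closed in $\hat X_k$. Everything reduces to showing $\operatorname{codim}(\Xi_J,W)\geqslant\dim\hat X_k+1$.

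This last inequality is the heart of the matter and the main obstacle. Write $N:=n-|J|=\dim\pb_J$ and set $m:=\min_i\delta_i+1$; note $c(k+1)\geqslant n\geqslant N$ because $k\geqslant\frac nc-1$, and $\delta_i\geqslant m-1$ for every $i$. Consider the incidence
\[
\ic_m:=\bigl\{\bigl((F_{i,j})_{i,j},Z\bigr)\in W\times\mathcal C_m\ :\ Z\subseteq\textstyle\bigcap_{i,j}\{F_{i,j}=0\}\bigr\},
\]
where $\mathcal C_m$ is the locus of curvilinear subschemes of $\pb_J$ of length $m$ supported at a single point; a standard computation gives $\dim\mathcal C_m=N+(m-1)(N-1)$. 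Because $\delta_i\geqslant m-1$, the restriction $H^0\bigl(\pb_J,\oc_{\pb_J}(\delta_i)\bigr)\to H^0(Z,\oc_Z(\delta_i))$ is surjective for every $Z\in\mathcal C_m$, so the fibre of $\ic_m\to\mathcal C_m$ over $Z$ is a linear subspace of $W$ of codimension exactly $m\,c(k+1)$; hence $\dim\ic_m=N+(m-1)(N-1)+\dim W-m\,c(k+1)$. On the other hand $\Xi_J\subseteq\mathrm{pr}_W(\ic_m)$ — a positive-dimensional component of a common zero locus contains a smooth curve germ, hence a curvilinear subscheme of every length, through each of its smooth points — and over each point of $\Xi_J$ the fibre of $\mathrm{pr}_W$ is positive-dimensional, since the curvilinear subschemes supported along a fixed curve inside that component already move in a positive-dimensional family. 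Therefore $\dim\Xi_J\leqslant\dim\ic_m-1$, which unwinds to
\[
\operatorname{codim}(\Xi_J,W)\ \geqslant\ m\,c(k+1)-N-(m-1)(N-1)+1\ =\ m\bigl(c(k+1)-N+1\bigr)\ \geqslant\ m\ =\ \min_i\delta_i+1 .
\]
Since by hypothesis $\min_i\delta_i\geqslant\dim\hat X_k=(n-1)(k+1)+1$, this gives $\operatorname{codim}(\Xi_J,W)\geqslant\dim\hat X_k+1$, hence $\dim B_J\leqslant\dim\ab-1$, as needed.

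I expect the codimension estimate to be where all the work is: the naive bound $\operatorname{codim}(\Xi_J,W)\geqslant c(k+1)-N+1$ from a single common zero never uses the size of the $\delta_i$ and is hopelessly weak, so one must genuinely import the degrees through curvilinear subschemes of length $\asymp\min_i\delta_i$; moreover the final estimate is sharp exactly when $c(k+1)=n$ and $J=\varnothing$, so the extra ``$+1$'' extracted from letting the support of the curvilinear scheme vary inside $\Xi_J$ is essential. The remaining points — that $G_J^\infty$ is determined by restrictions to $\pb_J$, the surjectivity of $\Lambda_w$ furnished by \cref{rank}, the regularity statement $H^0(\oc_{\pb_J}(\delta))\twoheadrightarrow H^0(\oc_Z(\delta))$ for curvilinear $Z$ with $\delta\geqslant\operatorname{length}(Z)-1$, the dimension of $\mathcal C_m$, and $\dim\hat X_k=(n-1)(k+1)+1$ with $\hat X_{k,J}^\circ$ locally closed in $\hat X_k$ — are routine but must be handled with care in the bookkeeping of the $c$ factors.
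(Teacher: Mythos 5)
Your proposal is correct, and its skeleton is the same as the paper's: fix $w\in\hat X_{k,J}^\circ$, use the linearity of $\af\mapsto\Phi(\af,w)$ together with the surjectivity of $\rho_w\circ\varphi_w$ from \cref{rank} to bound the fibre of the bad incidence set over $w$ by $\dim\ab-\operatorname{codim}(\Xi_J,W)$, add $\dim\hat X_k$ for the variation of $w$, and conclude non-dominance from $\operatorname{codim}(\Xi_J,W)\geqslant\min_i\delta_i+1>\dim\hat X_k$ (your $\Xi_J$ is exactly the paper's $\mathbb{V}_{2,J}^\infty$, the locus of tuples in $\prod_i V_{\delta_i,J}^{k+1}$ with positive-dimensional common zero set in $\pb_J$, and the reduction from Grassmannian points to restricted tuples is the same as the paper's passage from $G_J^\infty$ to $\mathbb{V}_{2,J}^\infty$ via $\tilde\ys_{1,J},\tilde\ys_{2,J}$). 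The one genuine difference is the key codimension estimate: the paper simply cites it as a result of Benoist and Brotbek--Darondeau \cite{Ben11,BD15}, whereas you reprove it from scratch by an incidence-variety count with punctual curvilinear subschemes of length $m=\min_i\delta_i+1$, using that such a scheme imposes independent conditions on forms of degree $\geqslant m-1$ and that the support point moves in the positive-dimensional component, yielding $\operatorname{codim}(\Xi_J,W)\geqslant m\bigl(c(k+1)-N+1\bigr)\geqslant\min_i\delta_i+1$. Your computation is sound (the dimension of the curvilinear locus, the exact codimension $m\,c(k+1)$ of the fibres of $\ic_m\to\mathcal C_m$, and the extra $-1$ from the positive-dimensional fibres of $\mathrm{pr}_W$ all check out; the degenerate cases $\Xi_J=\varnothing$ or $\dim\pb_J=0$ are trivial), and it is essentially the same mechanism as the cited proof, which uses length-$(\delta+1)$ subschemes of lines rather than curvilinear germs. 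What the citation buys is brevity; what your version buys is a self-contained argument giving the same (in fact formally slightly stronger, by the factor $c(k+1)-N+1$) numerical bound, which is all the lemma needs.
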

\begin{proof}
We introduce the following analogues of $\ys_J$ parametrized by affine spaces
\begin{align*} 	 \tl{\ys}_{1,J}:=\big\{\big(\alpha_{10},\ldots, \alpha_{ck},[z]\big)\in \prod_{i=1}^{c}V_{\delta_i}^{k+1}\times \pb_{J}\mid \forall 1\leqslant i\leqslant c,0\leqslant p\leqslant k, \alpha_{ip}([z])=0 \big\}, \\
 	\tl{\ys}_{2,J}:=\{\big(\alpha_{10}, \ldots,\alpha_{ck},[z]\big)\in \prod_{i=1}^{c}V_{\delta_i,J}^{k+1}\times \pb_{J}\mid \forall 1\leqslant i\leqslant c,0\leqslant p\leqslant k, \alpha_{ip}([z])=0 \}.
	\end{align*}
By analogy with $G_J^\infty$, we denote by $\mathbb{V}_{1,J}^\infty$ (resp. $\mathbb{V}_{2,J}^\infty$) the set of points in $\prod_{i=1}^{c} V_{\delta_i} ^{k+1}$ (resp. $\prod_{i=1}^{c} V_{\delta_i,J} ^{k+1}$) at which the fiber in $\tl{\ys_{1,J}}$ (resp. $\tl{\ys}_{2,J}$) is positive dimensional.
	
	Fix any $w\in \hat{X}_{k,J}^{\circ}$. 	By  \cref{eq:linear},   for any $\af=(\af_1,\ldots,\af_c)\in \ab^{\circ}$ we have
	$$
	{\Phi}(\af,{w})=\big([{\varphi}_{\ep_1,\delta_1,w}(\af_1)],\ldots,[{\varphi}_{\ep_c,\delta_c,w}(\af_c)] \big),
	$$
	where   ${\varphi}_{\ep_i,\delta_i,w}:\ab_{\ep_i,\delta_i}\to V_{\delta_i}^{k+1} $ is the linear map defined in \cref{eq:linear}. Let us define a $\cb$-linear map
	\begin{align*}
	\varphi_w: \ab &\to \prod_{i=1}^{c}V_{\delta_i}^{k+1}\\
	\af &\mapsto   \big({\varphi}_{\ep_1,\delta_1,w}(\af_1),\ldots,{\varphi}_{\ep_c,\delta_c,w}(\af_c)\big).
	\end{align*}  Then we have
	$$
	{\Phi}^{-1}(G^\infty_J)\cap (\ab^{\circ}\times \{{w}\})= {\varphi}_{{w}}^{-1}(\mathbb{V}_{1,J}^\infty)\cap \ab^{\circ}=(\rho_w\circ {\varphi}_{{w}})^{-1}(\mathbb{V}_{2,J}^\infty)\cap \ab^{\circ},
	$$
	where 
	\begin{eqnarray}\nonumber
	\rho_w:\prod_{i=1}^{c}V_{\delta_i}^{k+1}\rightarrow \prod_{i=1}^{c}V_{\delta_i,J}^{k+1}
	\end{eqnarray}
	is the  projection map. 
	Since the linear map $\rho_w\circ {\varphi}_{{w}}$ is diagonal by blocks, by  \cref{rank} we have
	$$
	{\rm rank}\rho_w\circ{\varphi}_{{w}}=\sum_{i=1}^{c}(k+1)\dim V_{\delta_i,J}.
	$$
	Therefore
	\begin{align*}
		{\rm dim}\big({\Phi}^{-1}(G_J^\infty)\cap (\ab^{\circ}\times \{{w}\})\big)&\leqslant  {\rm dim}\big((\rho_w\circ {\varphi}_{{w}})^{-1}(\mathbb{V}_{2,J}^\infty)\big)\\
		&\leqslant  {\rm dim}(\mathbb{V}_{2,J}^\infty)+{\rm dim\,  ker}(\rho_w\circ{\varphi}_{{w}})\\
		&\leqslant {\rm dim}(\mathbb{V}_{2,J}^\infty)+{\rm dim}\, \ab-{\rm rank}(\rho_w\circ{\varphi}_{{w}})\\
		&= {\rm dim}(\mathbb{V}_{2,J}^\infty)+{\rm dim}\, \ab-\sum_{i=1}^{c}(k+1)\dim V_{\delta_i,J}\\
		&= {\rm dim}\, \ab-{\rm codim}(\mathbb{V}_{2,J}^\infty,\prod_{i=1}^{c}V_{\delta_i,J}^{k+1}),
	\end{align*}
	which in turn implies that
	$$
	{\rm dim}\big({\Phi}^{-1}(G_J^\infty)\cap \ab^{\circ}\times \hat{X}_{k,J}^{\circ}\big)\leqslant {\rm dim}\, \ab-{\rm codim}(\mathbb{V}_{2,J}^\infty,\prod_{i=1}^{c}V_{\delta_i,J}^{k+1})+{\rm dim}\, \hat{X}_k.
	$$
	By a result due to   Benoist \cite{Ben11} and Brotbek-Darondeau (see \cite[Corollary 3.2]{BD15}), we have
	$$
	{\rm codim}(\mathbb{V}_{2,J}^\infty,\prod_{i=1}^{c}V_{\delta_i,J}^{k+1}) \geqslant \min \limits_{i=1,\ldots,c}\delta_i+1.
	$$
	Therefore, if 
	\begin{equation}\label{avoid} 
 	{\rm dim}\, \hat{X}_k<\min \limits_{i=1,\ldots,c}\delta_i+1,
	\end{equation}
	${\Phi}^{-1}(G_J^\infty)$ doesn't dominate $\ab^{\circ}$ via the projection $\ab^{\circ}\times\hat{X}_{k,J}^{\circ}\rightarrow \ab^{\circ}$, and  we can thus find a non-empty Zariski open subset $\ab_J\subset \ab^{\circ}$ such that
	$$
	{\Phi}^{-1}(G_J^\infty)\cap (\ab_J\times \hat{X}_{k,J}^{\circ} )=\varnothing .
	$$
\end{proof}

\subsection{Proof of \cref{main}}\label{main proof}
We are now in position to prove \cref{main}.  Indeed, we establish the  following more refined result than \cref{main}. 
\begin{thm}\label{k-jet}
	Let $X$ be an $n$-dimensional  projective manifold equipped with a very ample line bundle $A$. Let $c$ be any integer satisfying $1\leqslant  c\leqslant n-1$, and set $k:=\lceil \frac{n}{c}\rceil -1$.  Assume that the multi-degrees $(d_1,\ldots,d_c)\in (\mathbb{N})^{c}$ satisfy the following condition:
	\begin{align*} 
	&\exists \bm{\delta}:=(\delta_1,\ldots,\delta_c)\in \mathbb{N}^c\  {\rm with} \ \delta_i\geqslant\delta_0:= n(k+1) \ {\rm for}\ i=1,\ldots,c. \\ 
	&\exists \bm{\ep}:=(\ep_1,\ldots,\ep_c)\in \mathbb{N}^c\  {\rm with} \ \ep_i\geqslant k  \ {\rm for}\ i=1,\ldots,c.\\   
	&\exists r>\sum_{i=1}^{c}b_i(k+1)(\ep_i+k\delta_i), \ {\rm where}\ b_i:= \frac{\prod_{j=1}^{c}\delta_j^{k+1}}{\delta_i} \\ 
   &{\rm s.t.}\ \ 	d_i= \ep_i +(r+k) \delta_i \ {\rm for}\ i=1,\ldots,c.
	\end{align*}
Then  for general hypersurfaces $H_1\in  |A^{d_1}|,\ldots,H_c\in |A^{d_c}|$, 	their complete intersection (smooth) variety $Z:=H_1\cap \ldots \cap H_c$ is  \emph{almost $\tilde{k}$-jet ample} for   any $\tilde{k}\geqslant k$.
\end{thm}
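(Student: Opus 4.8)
The plan is to establish the strong Zariski open property \eqref{eq:star} for a single member $Z_{\mathbf a}$ of the Fermat-type family $\hat{\zs}_k^{\rm rel}\to\ab^\circ$ of complete intersections constructed in \cref{sec:intersection}, and then to propagate it: applying the Zariski-openness of \eqref{eq:star} in smooth projective families (the last part of \cref{fonctorial}) to the universal family of all complete intersections $H_1\in|A^{d_1}|,\dots,H_c\in|A^{d_c}|$, a general member of which is smooth of dimension $n-c$ by Bertini, a general such $Z$ satisfies \eqref{eq:star}, hence is almost $k$-jet ample by \cref{Zariski}. It therefore suffices to treat $\tilde k=k$; for $\tilde k>k$ I would use the standard lifting argument on the Demailly--Semple tower, pulling the ample line bundle obtained on $\hat Z_k$ up to $\hat Z_{\tilde k}$ and twisting by sufficiently small positive multiples of the successive tautological bundles $\oc_{Z_{k+1}}(1),\dots,\oc_{Z_{\tilde k}}(1)$, each relatively ample over the preceding stage, which preserves bigness while keeping the augmented base locus inside $Z_{\tilde k}^{\rm sing}$ since $\pi_{k,\tilde k}^{-1}(Z_k^{\rm sing})\subset Z_{\tilde k}^{\rm sing}$.

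To construct the relevant sections, fix $\delta_i,\epsilon_i,r$ as in the hypothesis, set $b_i=\prod_{j=1}^c\delta_j^{k+1}/\delta_i$, $\mathbf b=(b_1,\dots,b_c)$, $N=\sum_i b_i$, and let $\ys\subset\gf\times\pb^n$ be the universal complete intersection of \eqref{eq:generalized universal}. Since $(k+1)c\geqslant n$, \cref{effective nakamaye2} applies with $a_i=b_i$ and shows that, for every $J\subset\{0,\dots,n\}$, the bundle $\ls(\mathbf b)\boxtimes\oc_{\pb^n}(-1)_{\upharpoonright \ys_J}$ is effective with base locus inside $p_J^{-1}(G_J^\infty)$. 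I then pull these sections back along the morphism $\Psi$ of \cref{factor through} — which maps $\hat{\zs}_k^{\rm rel}$ into $\ys$ and the stratum lying over $X_J$ into $\ys_J$ — and, using \cref{thm:variant}\,(iii), $m_\infty(X_k,A)=k$ (\cref{wronskian result}), and $[\tau^r]^*\oc_{\pb^n}(1)=\nu_k^*\pi_{0,k}^*A^r$ (because $\tau_j^r\in H^0(X,A^r)$), the resulting line bundle on $\hat{X}_k$ is
\[
\mathscr M:=\nu_k^*\big(\oc_{X_k}(Nk')\otimes\pi_{0,k}^*A^{-r'}\big)\otimes\oc_{\hat{X}_k}(-NF),\qquad r':=r-\textstyle\sum_i b_i(k+1)(\epsilon_i+k\delta_i)\geqslant 1,
\]
so the pulled-back sections come with a \emph{negative twist} $A^{-r'}$. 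For $\mathbf a\in\ab^\circ$ one has $\hat{Z}_{\mathbf a,k}\subset\hat{X}_k^\circ=\bigsqcup_J\hat{X}_{k,J}^\circ$ by \cref{thm:variant}\,(ii), so there is no contribution from over $\Sigma$; on the stratum $\hat{X}_{k,J}^\circ\cap\hat{Z}_{\mathbf a,k}$ the base locus of $\mathscr M|_{\hat{Z}_{\mathbf a,k}}$ lies inside $\Phi^{-1}(G_J^\infty)$, which by \cref{exceptional locus} — whose hypotheses $\epsilon_i\geqslant k$ and $\delta_i\geqslant\dim\hat{X}_k=(n-1)(k+1)+1$ both follow from $\delta_i\geqslant n(k+1)$ — avoids a non-empty Zariski open $\ab_J\subset\ab^\circ$. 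Hence for $\mathbf a$ in the non-empty Zariski open set $\bigcap_J\ab_J$, the line bundle $\mathscr M|_{\hat{Z}_{\mathbf a,k}}$ is base-point free, in particular nef.

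Write $Z:=Z_{\mathbf a}$. Rearranging the displayed identity, $\nu_k^*\oc_{Z_k}(Nk')\otimes\oc_{\hat{Z}_k}(-NF)=\mathscr M|_{\hat{Z}_k}\otimes\nu_k^*\pi_{0,k}^*A^{r'}$ is nef and big. Following \cite[\S3]{Bro17}, I would then choose a rapidly decreasing weight vector $\mathbf u=(u_1,\dots,u_k)\in\mathbb N^k$ by Demailly's positivity criterion for the tautological bundles on the Demailly--Semple tower, and use that $\oc_{\hat{Z}_k}(-F)$ is relatively ample over $Z_k$, to arrange that $\nu_k^*\big(\oc_{Z_k}(\mathbf u)\otimes\pi_{0,k}^*A^{r'}\big)\otimes\oc_{\hat{Z}_k}(-F)$ is ample on $\hat{Z}_k$; tensoring this with the nef bundle $\mathscr M|_{\hat{Z}_k}$ exhibits
\[
\nu_k^*\oc_{Z_k}\big(Nk'+u_k,\,u_{k-1},\dots,u_1\big)\otimes\oc_{\hat{Z}_k}\big(-(N+1)F\big)
\]
as ample, which is precisely property \eqref{eq:star} for $Z$. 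Since $Z$ is the complete intersection of hypersurfaces of $A$-degrees $d_i=\epsilon_i+(r+k)\delta_i$, the first paragraph concludes the proof.

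I expect the main obstacle to be this last positivity step: producing the weights $\mathbf u$ and controlling the multiplicity of the $\nu_k$-relatively ample divisor $-F$ so that the displayed line bundle is genuinely ample. This reuses Brotbek's positivity estimates for the Demailly--Semple tower of a sufficiently high-degree variety — which is exactly why the $b_i$, hence $r$, hence the degrees $d_i$, have to be as large as prescribed, and is what yields the explicit bound of \cref{main} — with the negative twist $A^{-r'}$, $r'\geqslant1$, furnished by \cref{effective nakamaye2} playing the same role as in \cite{Bro17}. A secondary delicate point is the stratum-by-stratum control of base loci in the second paragraph, which rests on $\Psi$ being compatible with the stratifications $\{\hat{X}_{k,J}^\circ\}_J$ and $\{\ys_J\}_J$.
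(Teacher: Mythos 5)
Your overall architecture is the same as the paper's: pull back the effective bundle of \cref{effective nakamaye2} along $\Psi$ to get a nef line bundle with a negative twist $A^{-r'}$ on the Fermat-type members, combine it with an ample bundle on the blown-up tower to verify \eqref{eq:star}, spread out by the Zariski open property of \cref{Zariski}, and pass from $k$ to $\tilde k\geqslant k$ (which the paper simply quotes from \cite[Lemma 7.6]{Dem95} rather than re-proving by your lifting sketch). The one genuine gap is the step where you conclude that $\mathscr{M}_{\upharpoonright \hat{Z}_{\af,k}}$ is base-point free, ``in particular nef''. For a point of $\hat{Z}_{\af,k}$ lying over a stratum $X_J$ with $J\neq\varnothing$, the sections you invoke are sections of $\ls(\mathbf{b})\boxtimes\oc_{\pb^n}(-1)$ on $\ys_J$ only; these are not restrictions of sections on $\ys$, so they do not furnish global sections of $\mathscr{M}_{\upharpoonright \hat{Z}_{\af,k}}$, and \cref{effective nakamaye2} gives no control of the base locus of the global linear system over the deeper strata — only the open stratum $J=\varnothing$ is governed by honest global sections. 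The paper's \cref{nef claim} sidesteps this by arguing nefness curve by curve: an irreducible curve $C\subset\hat{Z}_{\af,k}$ has a dense open part $C^\circ$ in a unique stratum $\hat{X}^\circ_{k,J}$, the restriction of $\Psi$ there factors through $\ys_J$, and by properness of $\ys_J$ the whole image $\Psi(C)$ lies in $\ys_J$; since \cref{exceptional locus} gives $\Phi(C^{\circ})\cap G_J^{\infty}=\varnothing$, one gets $\Psi(C)\not\subset p_J^{-1}(G_J^{\infty})\supset {\rm Bs}\big(\ls(\mathbf{b})\boxtimes\oc_{\pb^n}(-1)_{\upharpoonright\ys_J}\big)$, hence $C\cdot\mathscr{M}\geqslant 0$. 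You need this (or an equivalent device); base-point freeness is neither proved nor needed.

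A secondary, easily repaired issue is the final positivity step, which you yourself flag: you ask for $\nu_k^*\big(\oc_{Z_k}(\mathbf{u})\otimes\pi_{0,k}^*A^{r'}\big)\otimes\oc_{\hat{Z}_k}(-F)$ to be ample with the \emph{specific} twist $r'$, but $r'$ can be as small as $1$, while Demailly's positivity criterion requires a twist growing with the weights, so such $\mathbf{u}$ need not exist. The correct bookkeeping — and the paper's — is to scale the nef factor instead: take the ample bundle $\nu_k^*\big(\oc_{X_k}(a_k,\ldots,a_1)\otimes\pi_{0,k}^*A^{a_0}\big)\otimes\oc_{\hat{X}_k}(-F)$ provided by \cite[Proposition 6.16]{Dem95} for some fixed $a_0,\ldots,a_k$, and add $\ell$ copies of the nef bundle with $\ell r'>a_0$; the resulting $A$-power $a_0-\ell r'$ is negative and can be absorbed by tensoring with the nef pullback of a positive power of $A$, which yields exactly \eqref{eq:star}. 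With these two repairs your argument coincides with the paper's proof.
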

\begin{proof}
Observe that, the choice for $(\bm{\ep},\bm{\delta})$ and $k$ in the theorem fits  all the requirements in   \cref{thm:variant,exceptional locus}. 
In the same vein as  \cite{BD15,Bro17}, let us first prove  the nefness.
\begin{claim}\label{nef claim}
Set $\ab_{\rm nef}:=\cap_{J}\ab_J$. For any $\af\in \ab_{\rm nef}$, the line bundle 
$$
\nu_k^*\big(\oc_{X_k}(\sum_{i=1}^{c}b_ik')\otimes \pi_{0,k}^*A^{-q(\bm{\ep},\bm{\delta},r)}\big)\otimes \oc_{\hat{X}_k}(-\sum_{i=1}^{c}b_iF)_{\upharpoonright \hat{Z}_{\af,k}}
$$
on $\hat{Z}_{\af,k} $ is nef. Here we write  $q(\bm{\ep},\bm{\delta},r):=r-\sum_{i=1}^{c}b_i(k+1)(\ep_i+k\delta_i)>0$.
\end{claim}
\begin{proof}
To prove that a line bundle on a projective variety is nef, it suffices to show that for any irreducible curve, its intersection with this line bundle is non-negative. For any fixed $\af\in  \ab_{\rm nef}$, and any irreducible curve $C\subset \hat{Z}_{\af,k}$, there is a unique $J\subset \{0,\ldots,n\}$ such that $C^{\circ}:=\hat{X}^\circ_{k,J}\cap C$ is a non-empty Zariski open subset of $C$, and thus $C^{\circ}\subset \hat{\zs}_{k,J}$.  It follows from   \cref{factor through} that   $\Psi$ factors through $\ys_J$ when restricted to $\hat{\zs}_{k,J}$. Hence $\Psi_{\upharpoonright C^{\circ}}$ also factors through $\ys_J$, and by the properness of $\ys_J$, $\Psi(C)\subset \ys_J$.  By  \cref{exceptional locus} and the definition of  $ \ab_{\rm nef}$, we have
	$$
	{\Phi}(C^{\circ})\cap G_J^{\infty}=\varnothing ,
	$$
	and thus 
	$$
	\Psi(C)\not\subset p_J^{-1}(G_J^{\infty}).
	$$
By \cref{effective nakamaye2}, one has
$$
	{\rm Bs}\big(\ls(b_1,\ldots,b_c)\boxtimes  \oc_{\pb^n}(-1)_{\upharpoonright \ys_J}\big)\subset p_J^{-1}(G_J^{\infty}),
$$
which yields
$$
\Psi(C)\cdot \big(\ls(b_1,\ldots,b_c)\boxtimes  \oc_{\pb^n}(-1)_{\upharpoonright \ys}\big)=\Psi(C)\cdot \big(\ls(b_1,\ldots,b_c)\boxtimes  \oc_{\pb^n}(-1)_{\upharpoonright \ys_J}\big)\geqslant 0.
$$
Write $\Psi_{\af}:\hat{Z}_{\af,k}\to \ys$ the restriction of $\Psi$ to $\hat{Z}_{\af,k}$. 
By  \eqref{eq:pull-back}, we have
$$
\Psi_\af^*\big(\ls(b_1,\ldots,b_c)\boxtimes  \oc_{\pb^n}(-1)_{\upharpoonright \ys}\big)= \nu_k^*\big(\oc_{X_k}(\sum_{i=1}^{c}b_ik')\otimes \pi_{0,k}^*A^{-q(\bm{\ep},\bm{\delta},r)}\big)\otimes \oc_{\hat{X}_k}(-\sum_{i=1}^{c}b_iF)_{\upharpoonright \hat{Z}_{\af,k} },
$$
and thus 
$$
C\cdot \Big(\nu_k^*\big(\oc_{X_k}(\sum_{i=1}^{c}b_ik')\otimes \pi_{0,k}^*A^{-q(\bm{\ep},\bm{\delta},r)}\big)\otimes \oc_{\hat{X}_k}(-\sum_{i=1}^{c}b_iF)_{\upharpoonright \hat{Z}_{\af,k} }\Big)\geqslant 0,
$$
which proves the claim.
\end{proof}	
\medskip

By   \cite[Proposition 6.16]{Dem95}, we can find an ample line bundle 
	$$
 	\nu_k^*\big(\oc_{X_k}(a_k,\ldots,a_1)\otimes \pi_{0,k}^*A^{a_0}\big)\otimes \oc_{\hat{X}_k}(-F)
	$$
on $\hat{X}_k$  	for some $a_0,\ldots,a_k\in \mathbb{N}$.  Denote by $\nu_{\af,k}:\hat{Z}_{\af,k}\to Z_{\af,k}$   the blow-up of the asymptotic Wronskian ideal sheaf $\wk_{\infty}(Z_{\af,k})$ of $Z_{\af,k}$. Write $A_{\af}:=A_{\upharpoonright Z_{\af}}$ and  $F_{\af}:=F\cap \hat{Z}_{\af,k}$. Therefore, for any $\ell>a_0$, by \cref{nef claim}
the line bundle
\begin{align*}
\nu_{\af,k}^*\big(\oc_{Z_{\af,k}}(a_k+\sum_{i=1}^{c}\ell b_ik',a_{k-1},\ldots,a_{1})\otimes \pi_{0,k}^*A_{\af}^{a_0-\ell q(\bm{\ep},\bm{\delta},r)}\big)\otimes \oc_{\hat{Z}_{\af,k}}\big(-(\sum_{i=1}^{c}\ell b_i+1)F_{\af} \big)=\\
\nu_k^*\big(\oc_{X_k}(a_k+\sum_{i=1}^{c}\ell b_ik',a_{k-1},\ldots,a_{1})\otimes \pi_{0,k}^*A^{a_0-\ell q(\bm{\ep},\bm{\delta},r)}\big)\otimes \oc_{\hat{X}_{k}}\big(-(\sum_{i=1}^{c}\ell b_i+1)F \big)_{\upharpoonright \hat{Z}_{\af,k}}
\end{align*}
is ample for  $\af\in  \ab_{\rm nef}$, which verifies the condition \eqref{eq:star}.    
  By the Zariski open property  \eqref{eq:star}  in \cref{Zariski}, we conclude that there exists a non-empty Zariski open subset $S_{\rm ample}\subset \prod_{i=1}^{c}|A^{d_i}|$ such that for any $(H_1,\ldots,H_c)\in S_{\rm ample}$, their complete intersection $Z:=H_1\cap\ldots\cap H_c$ is a reduced smooth variety of codimension $c$ in $X$, and $Z$ is almost $k$-jet ample. 
By  \cite[Lemma 7.6]{Dem95}, if a complex manifold $Y$ is almost $k$-jet ample, then it is also  almost $\tilde{k}$-jet ample   for any $\tilde{k}\geqslant k$. This finishes the proof of the theorem.
\end{proof}

Let us deduce  \cref{main} from \cref{k-jet}.
\begin{proof}[Proof of \cref{main}]
	Let us keep the same notations in \cref{k-jet}. We will fix  $\ep_1=\cdots=\ep_c\geqslant k:=\lceil \frac{n}{c}\rceil -1$ and $\bm{\delta}:=(\delta_0,\ldots,\delta_0)$ with $\delta_0=n(k+1)$. Then $b_1=\cdots=b_c=\delta_0^{c(k+1)-1}$.  If we take $$d_0:=\delta_0\big(c(k+1)(k+\delta_0+k\delta_0-1)\delta_0^{c(k+1)-1}+1+k\big)+k,$$
then any $d\geqslant d_0$ has a decomposition
$$
d=\delta_0(r+k)+\ep
$$
with $k\leqslant\ep<k+\delta_0$, and $$r\geqslant c\delta_0^{c(k+1)-1}(k+1)(k+\delta_0-1+k\delta_0)+1>\sum_{i=1}^{c}b_i(k+1)(\ep+k\delta_0),$$ satisfying the conditions in  \cref{k-jet}. Observe that
\begin{align}\label{eq:lower bound}
d_0&=\delta_0\big(c(k+1)(k+\delta_0+k\delta_0-1)\delta_0^{c(k+1)-1}+1+k\big)+k\\\nonumber
&\leqslant \delta_0^{c(k+1)}c(k+1)^2(\delta_0+1) \\ \nonumber
&\leqslant 2c n^{c\lceil \frac{n}{c}\rceil+1}\cdot \lceil \frac{n}{c}\rceil^{c\lceil \frac{n}{c}\rceil+3} 
\end{align}
In conclusion,  the complete intersection $H_1\cap \ldots \cap H_c$ of $c$-general hypersurfaces $H_1,\ldots,H_c\in |\as^{d}|$ with $d\geqslant2c n^{c\lceil \frac{n}{c}\rceil+1}\cdot \lceil \frac{n}{c}\rceil^{c\lceil \frac{n}{c}\rceil+3} $ is  almost $\tilde{k}$-jet ample for any $\tilde{k}\geqslant  \frac{n}{c} -1$.
\end{proof}

Let us mention that when $\frac{n}{2}\leqslant c\leqslant n-1$, 	by \cite[Corollary 2.9]{BD15}, one can take $\delta_0:=2n-1$, which is slightly better than that in \cref{k-jet}. Now we apply the estimate in \cite{BD15} to provide a slight better bound in the case $\frac{n}{2}\leqslant c\leqslant n-1$.
\begin{proof}[Proof of \cref{main:Debarre}]
  Note that if $X$ is a smooth projective variety whose cotangent bundle $\Omega_X$ is ample, then for any smooth closed  subvariety  $Y\subset X$, $\Omega_Y$ is also ample. Hence it suffice to prove the theorem for $c=\lceil\frac{n}{2}\rceil$, $k=1$. By \eqref{eq:lower bound} and $\delta_0=2n-1$, one can take
    \begin{align*} 
 d_{{\rm Deb},n}&=\delta_0\big(c(k+1)(k+\delta_0+k\delta_0-1)\delta_0^{c(k+1)-1}+1+k\big)+k\\\nonumber
 & = 4(2n-1)^{2\lceil \frac{n}{2}\rceil+1} \cdot \lceil \frac{n}{2}\rceil+2(2n-1)+1\\
  & \leqslant  2(2n-1)^{n+2} \cdot (n+1)+4n-1\\
 &\leqslant (2n)^{n+3}.
 \end{align*}
\end{proof}

\subsection{Proof of \cref{main 3}}
This subsection is devoted to prove  \cref{main 3}.  
\begin{proof}[Proof of \cref{main 3}]

  Recall that  the  Demailly-Semple $k$-jet tower $Z_k$ of $(Z,T_Z)$  is a locally trivial product as well as its blow-up $\nu_k:\hat{Z}_k\to Z_k$ along the Wronskian ideal sheaf $\wk_{\infty}(Z_k)$. 
 Indeed, by \cref{brotbek wronskian}  for any $z\in Z$  there exists an open set $U$ containing $z$ so that $U_k:=\pi_{0,k}^{-1}(U)\simeq U\times \mathbb{R}_{n-c,k}$ and $\wk_{\infty}(Z_k)_{\upharpoonright U_k}\simeq {\rm pr}_2^*\mathfrak{I}_{n-c,k}$, where ${\rm pr}_2:U\times \mathbb{R}_{n-c,k}\to \mathbb{R}_{n-c,k}$ is the projection map.  Let us denote by $\mu_k:\hat{\rc}_{n-c,k}\to \rc_{n-c,k}$ the blow-up of $\rc_{n-c,k}$ along $\mathfrak{I}_{n-c,k}$. Write $\hat{U}_k:=\nu_k^{-1}(U_k)$.  Then
 \begin{align}\label{eq:local trivial}
  \xymatrix{
 	\hat{U}_k \ar[r]^-{\simeq} \ar[d]_-{\nu_k} & U\times \hat{\rc}_{n-c,k} \ar[d]^-{\mathds{1}\times \mu_k}\\
 	U_k \ar[r]^-{\simeq} \ar[r] & U\times \rc_{n-c,k} \\
 }
 \end{align}
  It follows from the proof of \cref{k-jet} that, there exists $a_1,\ldots,a_k,q\in \mathbb{N}$ such that 
  $\nu_k^*\oc_{Z_k}(a_k,\ldots,a_1)\otimes \oc_{\hat{Z}_k}(-qF)$ is ample. Write $\pi_k=\pi_{0,k}\circ \nu_k:\hat{Z}_k\to Z$. One thus can take $a_1,\ldots,a_k,q\gg 0$ so that all higher direct images 
  \begin{align}\label{vanishing}
  R^i(\pi_{k})_*\big(\nu_k^*\oc_{Z_k}(a_k,\ldots,a_1)\otimes \oc_{\hat{Z}_k}(-qF)\big)=0 \quad \forall \ i>0,
  \end{align}   
  and $\ls:=\nu_k^*\oc_{Z_k}(a_k,\ldots,a_1)\otimes \oc_{\hat{Z}_k}(-qF)\otimes\pi_k^*\as^{-1}$ is  ample   for some very ample line bundle $\as$ on $Z$.
  \begin{claim}\label{direct image ample}
$(\pi_{k})_*\big(\nu_k^*\oc_{Z_k}(ma_k,\ldots,ma_1)\otimes \oc_{\hat{Z}_k}(-mqF)\big)$ is an ample vector bundle for each $m\gg 0$.
  	\end{claim}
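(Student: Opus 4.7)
Using $\ls \otimes \pi_k^* \as = \nu_k^* \oc_{Z_k}(a_k,\ldots,a_1) \otimes \oc_{\hat{Z}_k}(-qF)$ together with the projection formula, one has
\[ \mathscr{E}_m = (\pi_k)_*(\ls^m \otimes \pi_k^* \as^m) = (\pi_k)_* \ls^m \otimes \as^m. \]
The strategy is to prove that $(\pi_k)_* \ls^m$ is globally generated---and hence nef---for $m \gg 0$, and then to invoke the standard fact that the tensor product of a nef vector bundle with an ample line bundle is an ample vector bundle (cf.\ Lazarsfeld, \emph{Positivity in Algebraic Geometry} II, \S 6.2).

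By the local trivialization \eqref{eq:local trivial}, $\pi_k : \hat{Z}_k \to Z$ is flat with all fibers isomorphic to the smooth projective variety $\hat{\rc}_{n-c,k}$. Since $\ls$ is ample on $\hat{Z}_k$, for $m \gg 0$ relative Serre vanishing yields $R^i(\pi_k)_* \ls^m = 0$ for $i > 0$, and cohomology-and-base-change then gives that $(\pi_k)_* \ls^m$ is locally free on $Z$ with fiber $H^0(\hat{Z}_{k,z}, \ls^m|_{\hat{Z}_{k,z}})$ at each $z$. To establish global generation, one must show that the restriction map
\[ H^0(\hat{Z}_k, \ls^m) \longrightarrow H^0(\hat{Z}_{k,z}, \ls^m|_{\hat{Z}_{k,z}}) \]
is surjective uniformly in $z \in Z$. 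Embedding $\hat{Z}_k \hookrightarrow \pb^N$ by sections of $\ls^{m_0}$ for some large $m_0$, this amounts to a uniform Castelnuovo--Mumford regularity bound for the family $\{\mathcal{I}_{\hat{Z}_{k,z}}\}_{z \in Z}$ of ideal sheaves in $\pb^N$; the required uniformity is automatic because, by \eqref{eq:local trivial}, the embedded fibers $\hat{Z}_{k,z} \hookrightarrow \pb^N$ all share a common Hilbert polynomial. Consequently $(\pi_k)_* \ls^m$ is a quotient of a trivial bundle, hence nef, and therefore $\mathscr{E}_m = (\pi_k)_* \ls^m \otimes \as^m$ is ample for $m \gg 0$.

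The main technical point is the uniform vanishing underlying global generation; this is the standard mechanism for transferring positivity of a line bundle on the total space to positivity of its direct image on the base, and it is handled cleanly here by the uniformity of the fibers of $\pi_k$ afforded by the local trivialization \eqref{eq:local trivial}. Everything else is either the projection formula or the black-box principle that a nef bundle twisted by an ample line bundle is ample.
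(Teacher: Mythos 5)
Your proposal is correct and shares the paper's skeleton: both factor the bundle in the claim as $(\pi_k)_*\ls^m\otimes\as^m$ via the projection formula, prove that $(\pi_k)_*\ls^m$ is globally generated for $m\gg0$, and then deduce ampleness from the twist by $\as^m$. The differences lie in the two technical steps. For global generation, the paper works directly on $\hat{Z}_k$: global generation at $y\in Z$ is reduced to the vanishing $H^1\big(\hat{Z}_k,\ls^m\otimes\pi_k^*\mc_y\big)=0$, which holds for $m\geqslant m_y$ by Serre vanishing since $\ls$ is ample, and a compactness argument in $y$ then produces a uniform bound; you instead use cohomology-and-base-change plus a uniform Castelnuovo--Mumford regularity bound for the flat family of fibers embedded by $\ls^{m_0}$ (boundedness of regularity for constant Hilbert polynomial). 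Your device is a heavier black box, but it addresses head-on the uniformity in $z$ that the paper dispatches with a one-line compactness remark; conversely, since the fibers form a single flat family over the Noetherian base $Z$, the uniformity can be had more cheaply (relative Serre vanishing over $Z$) without invoking boundedness for Hilbert polynomials. For the final step, the paper notes that the globally generated $(\pi_k)_*\ls^m$ twisted by the very ample $\as^m$ is a quotient of a direct sum of copies of $\as^m$, hence ample, while you quote that a nef bundle tensored with an ample line bundle is ample; both are standard and interchangeable here. Two small repairs: first, your regularity argument as written only gives surjectivity of $H^0(\hat{Z}_k,\ls^m)\to H^0\big(\hat{Z}_{k,z},\ls^m|_{\hat{Z}_{k,z}}\big)$ for $m$ a multiple of the embedding degree $m_0$, whereas the claim concerns every $m\gg0$; this is fixed either by applying the same uniform bound to the finitely many residue classes $j\bmod m_0$ (pushing the twists $\ls^{j}$ forward to $\pb^N$ and bounding their regularity along the fibers), or by switching to the uniform vanishing $H^1\big(\hat{Z}_k,\ls^m\otimes\pi_k^*\mc_z\big)=0$ as in the paper. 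Second, the fiber $\hat{\rc}_{n-c,k}$ is a blow-up of $\rc_{n-c,k}$ along an ideal sheaf and need not be smooth; your argument only uses flatness and projectivity of the fibers, so the word \emph{smooth} should simply be dropped.
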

  \begin{proof}[Proof of \cref{direct image ample}]
 Denote by $\mathscr{E}_m:=(\pi_k)_*(\ls^m)$. 	From the   local trivial product structure of $\hat{Z}_k$ as in \eqref{eq:local trivial}, $\es_m$ is locally free for each $m>0$.

  	By \eqref{vanishing} and the degeneration of Leray spectral sequences, one has
  		\[
  	H^i(Z, \es_m\otimes \fc )=H^i(\hat{Z}_k,\ls^m\otimes \pi_k^*\fc) \quad \forall \ i>0,\ m>0
  	\]
  	for any coherent sheaf $\fc$ on $Z$. 
  	Fix any  point $y\in Z$, with the maximal ideal of $\oc_{Z,y}$ denoted by $\mc_y$. 
  	As $\ls$ is  ample, there is a positive integer $m_y\gg 0$ such that
  	$$
  	H^1(Z,\es_m\otimes \mc_y )=H^1(\hat{Z}_k,\ls^m\otimes \pi_k^*\mc_y)=0 \quad \forall \ m\geqslant m_y,
  	$$
   which in turn implies that $\es_m$ is globally generated  at $y$ for all $m\geqslant m_y$. 
   As $Z$ is compact,  we can find an integer $m_0\gg 0$ such that  $\es_m$ is globally generated when $m\geqslant m_0$. Observe that
   $$
  \es_m=(\pi_{k})_*\big(\nu_k^*\oc_{Z_k}(ma_k,\ldots,ma_1)\otimes \oc_{\hat{Z}_k}(-mqF)\big)\otimes \as^{-m}
   $$
   where $\as$ is a very ample line bundle on $Z$. Hence $(\pi_{k})_*\big(\nu_k^*\oc_{Z_k}(ma_k,\ldots,ma_1)\otimes \oc_{\hat{Z}_k}(-mqF)\big)$ is a quotient of a direct sum of copies of the very ample line bundle  $\oc_Z(\as^m)$.  	By the \emph{cohomological characterization of ample vector bundles} in \cite[Theorem 6.1.10]{Laz04II},  $(\pi_{k})_*\big(\nu_k^*\oc_{Z_k}(ma_k,\ldots,ma_1)\otimes \oc_{\hat{Z}_k}(-mqF)\big)$ is ample for $m\geqslant m_0$.
  \end{proof}   
By the projection formula
\begin{align}\label{eq:direct image}
\mathscr{F}:=(\pi_{k})_*\big(\nu_k^*\oc_{Z_k}(a_k,\ldots,a_1)\otimes \oc_{\hat{Z}_k}(-qF)\big)=(\pi_{0,k})_*\big(\oc_{Z_k}(a_k,\ldots,a_1)\otimes \mathfrak{J}_{q}\big),
\end{align}
where $\mathfrak{J}_{q}:=(\nu_{k})_*\oc_{\hat{Z}_k}(-qF)$ is the ideal sheaf of $ {Z_k}$ with the subscheme $\oc_{Z_k}/\mathfrak{J}_{q}$ supported on $Z_k^{\rm sing}$. By \cref{direct image ample}, for proper $a_1,\ldots,a_k,q\gg 0$, $\nu_k^*\oc_{Z_k}(a_k,\ldots,a_1)\otimes \oc_{\hat{Z}_k}(-qF)\otimes \pi_k^*\as^{-1}$ is very ample. For any regular germ of curve $f:(\cb,0)\to (Z,z)$, its $k$-th lift $f_{[k]}\in Z_k^{\rm reg}$.   Hence there exists a global section $\sigma\in H^0\big(Z_k,\oc_{Z_k}(a_k,\ldots,a_1)\otimes \pi_{0,k}^*\as^{-1} \otimes \mathfrak{J}_{q}\big)$ so that $\sigma(f_{[k]})\neq 0$.  Let $P_\sigma\in H^0(Z,\mathscr{F}\otimes \as^{-1})$ be the corresponding element of $\sigma$ under the isomorphism \eqref{eq:direct image}. Hence $P_{\sigma}([f]_k)\neq 0$. It follows from \cite[Proposition 6.16.\lowerromannumeral{1}]{Dem95} that  $
\mathscr{F}\subset E_{k,m}T^*_Z
$ for $m:=a_1+\cdots+a_k$. The corollary is thus proved. 
\end{proof}

\bibliographystyle{smfalpha}
\bibliography{biblio}

\providecommand{\bysame}{\leavevmode ---\ }
\providecommand{\og}{``}
\providecommand{\fg}{''}
\providecommand{\smfandname}{\&}
\providecommand{\smfedsname}{\'eds.}
\providecommand{\smfedname}{\'ed.}
\providecommand{\smfmastersthesisname}{M\'emoire}
\providecommand{\smfphdthesisname}{Th\`ese}
\begin{thebibliography}{{Dem}18}

\bibitem[BDa17]{BD15}
{\scshape D.~Brotbek {\normalfont \smfandname} L.~Darondeau} -- {\og Complete
  intersection varieties with ample cotangent bundles\fg}, \emph{Inventiones
  mathematicae} (2017).

\bibitem[BD17]{BD17}
{\scshape D.~{Brotbek} {\normalfont \smfandname} Y.~{Deng}} -- {\og {On the
  positivity of the logarithmic cotangent bundle}\fg}, \emph{ArXiv e-prints}
  (2017).

\bibitem[BD18]{BD18}
\bysame , {\og {Hyperbolicity of the complements of general hypersurfaces of
  high degree}\fg}, \emph{ArXiv e-prints} (2018).

\bibitem[Ben11]{Ben11}
{\scshape O.~Benoist} -- {\og Le th\'eor\`eme de {B}ertini en famille\fg},
  \emph{Bulletin de la Soci\'et\'e Math\'ematique de France} \textbf{139}
  (2011), no.~4, p.~555--569.

\bibitem[Bro16]{Bro16}
{\scshape D.~Brotbek} -- {\og Symmetric differential forms on complete
  intersection varieties and applications\fg}, \emph{Math. Ann.} \textbf{366}
  (2016), no.~1-2, p.~417--446.

\bibitem[Bro17]{Bro17}
\bysame , {\og On the hyperbolicity of general hypersurfaces\fg}, \emph{Publ.
  Math. Inst. Hautes \'Etudes Sci.} \textbf{126} (2017), p.~1--34.

\bibitem[Dar16]{Dar15}
{\scshape L.~Darondeau} -- {\og On the logarithmic {G}reen-{G}riffiths
  conjecture\fg}, \emph{Int. Math. Res. Not. IMRN} (2016), no.~6,
  p.~1871--1923.

\bibitem[Deb05]{Deb05}
{\scshape O.~Debarre} -- {\og Varieties with ample cotangent bundle\fg},
  \emph{Compositio Mathematica} \textbf{141} (2005), no.~6, p.~1445--1459.

\bibitem[DEG97]{DE97}
{\scshape J.-P. Demailly {\normalfont \smfandname} J.~El~Goul} -- {\og
  Connexions m\'eromorphes projectives partielles et vari\'et\'es alg\'ebriques
  hyperboliques\fg}, \emph{C. R. Acad. Sci. Paris S\'er. I Math.} \textbf{324}
  (1997), no.~12, p.~1385--1390.

\bibitem[Dem97]{Dem95}
{\scshape J.-P. Demailly} -- {\og Algebraic criteria for {K}obayashi hyperbolic
  projective varieties and jet differentials\fg}, in \emph{Algebraic
  geometry---{S}anta {C}ruz 1995}, Proc. Sympos. Pure Math., vol.~62, Amer.
  Math. Soc., Providence, RI, 1997, p.~285--360.

\bibitem[Dem11]{Dem11}
\bysame , {\og Holomorphic {M}orse inequalities and the
  {G}reen-{G}riffiths-{L}ang conjecture\fg}, \emph{Pure Appl. Math. Q.}
  \textbf{7} (2011), no.~4, Special Issue: In memory of Eckart Viehweg,
  p.~1165--1207.

\bibitem[{Dem}18]{Dem18}
{\scshape J.-P. {Demailly}} -- {\og {Recent results on the Kobayashi and
  Green-Griffiths-Lang conjectures}\fg}, \emph{ArXiv e-prints} (2018).

\bibitem[Div08]{Div08}
{\scshape S.~Diverio} -- {\og Differential equations on complex projective
  hypersurfaces of low dimension\fg}, \emph{Compos. Math.} \textbf{144} (2008),
  no.~4, p.~920--932.

\bibitem[DMR10]{DMR10}
{\scshape S.~Diverio, J.~Merker {\normalfont \smfandname} E.~Rousseau} -- {\og
  Effective algebraic degeneracy\fg}, \emph{Invent. Math.} \textbf{180} (2010),
  no.~1, p.~161--223.

\bibitem[DR15]{DR15}
{\scshape S.~Diverio {\normalfont \smfandname} E.~Rousseau} -- {\og The
  exceptional set and the {G}reen-{G}riffiths locus do not always coincide\fg},
  \emph{Enseign. Math.} \textbf{61} (2015), no.~3-4, p.~417--452.

\bibitem[DT10]{DT10}
{\scshape S.~Diverio {\normalfont \smfandname} S.~Trapani} -- {\og A remark on
  the codimension of the {G}reen-{G}riffiths locus of generic projective
  hypersurfaces of high degree\fg}, \emph{J. Reine Angew. Math.} \textbf{649}
  (2010), p.~55--61.

\bibitem[Ful98]{Ful13}
{\scshape W.~Fulton} -- \emph{Intersection theory}, second \smfedname,
  Ergebnisse der Mathematik und ihrer Grenzgebiete. 3. Folge. A Series of
  Modern Surveys in Mathematics [Results in Mathematics and Related Areas. 3rd
  Series. A Series of Modern Surveys in Mathematics], vol.~2, Springer-Verlag,
  Berlin, 1998.

\bibitem[GG80]{GG79}
{\scshape M.~Green {\normalfont \smfandname} P.~Griffiths} -- {\og Two
  applications of algebraic geometry to entire holomorphic mappings\fg}, in
  \emph{The {C}hern {S}ymposium 1979 ({P}roc. {I}nternat. {S}ympos.,
  {B}erkeley, {C}alif., 1979)}, Springer, New York-Berlin, 1980, p.~41--74.

\bibitem[Kob70]{Kob70}
{\scshape S.~Kobayashi} -- \emph{Hyperbolic manifolds and holomorphic
  mappings}, Pure and Applied Mathematics, vol.~2, Marcel Dekker, Inc., New
  York, 1970.

\bibitem[Laz04]{Laz04II}
{\scshape R.~Lazarsfeld} -- \emph{Positivity in algebraic geometry.{II}},
  Ergebnisse der Mathematik und ihrer Grenzgebiete. 3. Folge. A Series of
  Modern Surveys in Mathematics [Results in Mathematics and Related Areas. 3rd
  Series. A Series of Modern Surveys in Mathematics], vol.~49, Springer-Verlag,
  Berlin, 2004, Positivity for vector bundles, and multiplier ideals.

\bibitem[{Mer}18]{Mer18}
{\scshape J.~{Merker}} -- {\og {Kobayashi hyperbolicity in degree {$\geqslant
  n^{2n}$}}\fg}, \emph{ArXiv e-prints} (2018).

\bibitem[Nad89]{Nad89}
{\scshape A.~M. Nadel} -- {\og Hyperbolic surfaces in {${\bf P}^3$}\fg},
  \emph{Duke Math. J.} \textbf{58} (1989), no.~3, p.~749--771.

\bibitem[Nak00]{Nak00}
{\scshape M.~Nakamaye} -- {\og Stable base loci of linear series\fg},
  \emph{Mathematische Annalen} \textbf{318} (2000), no.~4, p.~837--847.

\bibitem[Rou10]{Rou10}
{\scshape E.~Rousseau} -- {\og Hyperbolicity of geometric orbifolds\fg},
  \emph{Trans. Amer. Math. Soc.} \textbf{362} (2010), no.~7, p.~3799--3826.

\bibitem[RY18]{RY18}
{\scshape E.~{Riedl} {\normalfont \smfandname} D.~{Yang}} -- {\og {Applications
  of a grassmannian technique in hypersurfaces}\fg}, \emph{ArXiv e-prints}
  (2018).

\bibitem[Siu04]{Siu04}
{\scshape Y.-T. Siu} -- {\og Hyperbolicity in complex geometry\fg}, in
  \emph{The legacy of {N}iels {H}enrik {A}bel}, Springer, Berlin, 2004,
  p.~543--566.

\bibitem[{Xie}16]{Xie16}
{\scshape S.-Y. {Xie}} -- {\og {Generalized Brotbek's symmetric differential
  forms and applications}\fg}, \emph{ArXiv e-prints} (2016).

\bibitem[Xie18]{Xie15}
{\scshape S.-Y. Xie} -- {\og On the ampleness of the cotangent bundles of
  complete intersections\fg}, \emph{Inventiones mathematicae} (2018).

\end{thebibliography}

\end{document}